\def\RR{\mathbb{R}}
\def\div{\mathrm{div}\,}
\def\supp{\mathrm{supp}\,}
\def\to{\rightarrow}
\def\tto{\longrightarrow}
\def\vphi{\varphi}
\def\pa{\partial}
\def\na{\nabla}
\newtheorem{theorem}{Theorem}[section]
\newtheorem{lemma}[theorem]{Lemma}
\newtheorem{proposition}[theorem]{Proposition}
\newtheorem{definition}[theorem]{Definition}
\newtheorem{rk&ex}[theorem]{Remarks \& Examples}
\newtheorem{corollary}[theorem]{Corollary}
\title{A Hele-Shaw problem for tumor growth}
\author{Antoine Mellet \and Beno\^{\i}t Perthame
\and Fernando Quir\'os}
\address{Antoine Mellet\hfill\break\indent Department of Mathematics, University of Maryland, College Park, Maryland 20742\hfill\break\indent  Fondation Sciences Math\'ematiques de Paris, 11 rue Pierre et Marie Curie, 75231 PARIS.}
\email{{\tt mellet@math.umd.edu}}
\address{Beno\^{\i}t Perthame\hfill\break\indent
Sorbonne Universit\'es, UPMC Univ. Paris 06, CNRS, UMR 7598, Laboratoire Jacques-Louis Lions, INRIA \'Equipe MAMBA, 4, place Jussieu 75005, Paris, France.} \email{{\tt
benoit.perthame@upmc.fr}}
\address{Fernando Quir\'{o}s\hfill\break\indent
Departamento  de Matem\'{a}ticas, Universidad Aut\'{o}noma de Madrid
\hfill\break\indent 28049-Madrid, Spain.} \email{{\tt
fernando.quiros@uam.es} }
\begin{document}

\begin{abstract}
We consider weak solutions to a problem modeling tumor growth. Under certain conditions on the initial data, solutions can be obtained  by passing to the stiff (incompressible) limit in a porous medium type problem with a Lotka-Volterra source term describing the evolution of the number density of cancerous cells.
We prove that such limit solutions solve a free boundary problem of Hele-Shaw type. We also obtain regularity properties, both for the solution and for its free boundary.

The main new difficulty arises from the competition between the growth due to the source, which keeps the initial singularities, and the free boundary which invades the domain with a regularizing effect. New islands can be generated at singular times.
\end{abstract}

\maketitle

\noindent{\makebox[1in]\hrulefill}\newline
2010 \textit{Mathematics Subject Classification.} 35K55, 35B25, 76D27, 92C50.
\newline\textit{Keywords and phrases.} Hele-Shaw equation, free
boundary problems, porous medium equation, tumor growth.
\section{Introduction}
\setcounter{equation}{0}
We consider solutions $(n,p)$ to the nonlinear parabolic equation
\begin{equation}
\label{eq:main}
\pa_t n = \Delta p + n G(p) \quad\text{in }\mathcal{D}'(\mathbb{R}^N\times\mathbb{R}_+),
\end{equation}
where the pair $(n,p)$ lies in the so called Hele-Shaw graph,
\begin{equation}
\label{eq:Hele-Shaw.graph}
0\le n\le1,\quad p\geq 0, \quad \text{and }  p = 0 \quad \text{for }  \; 0\leq n< 1,
\end{equation}
with an initial data
\begin{equation}
\label{eq:initial.data}
n(\cdot,0)=n^0\in L^1_+(\mathbb{R}^N).
\end{equation}
This problem was studied in~\cite{PQV} as a simple mechanical model for the propagation of tumors, following more elaborate models in \cite{bresch,Lowengrub_survey}. In this setting $n$ stands for the density of  cancerous cells, and $p$ for the pressure.
The function $G$ in the source term, taking account of pressure limited cell multiplication by division,  satisfies
\begin{equation}\label{eq:G}
G\in C^1([0,\infty)), \quad G'(\cdot)<0, \quad
\mbox{there exists }  p_M>0 \mbox{ such that } G(p_M)=0.
\end{equation}
The threshold pressure $p_M$, sometimes called homeostatic pressure, is the smallest pressure that prevents cell multiplication because of contact inhibition. For simplicity, and without loss of generality, the maximum packing density of cells has been set to $n=1$; see~\eqref{eq:Hele-Shaw.graph}.

\medskip

\noindent\textit{Some notations. }  We denote
$Q=\mathbb{R}^N\times\mathbb{R}_+$, and, for $T>0$, $Q_T=\mathbb{R}^N\times(0,T)$.  Given $g:Q\to\mathbb{R}$, we will use several times the
abridged notation $g(t)$  to describe the function $x\to g(x,t)$.

\medskip

Whenever the initial data satisfy
\begin{equation}
\label{eq:condition.initial.data}
n^0\in BV(\mathbb{R}^N),\qquad 0\le n^0\le 1\quad\text{a.e.~in }\mathbb{R}^N,
\end{equation}
existence of a solution to~\eqref{eq:main}--\eqref{eq:initial.data} can  be proved by passing to the \emph{stiff} (incompressible) limit $\gamma\to\infty$ for weak solutions $(n_\gamma,p_\gamma)$ to the porous medium type problem
\begin{equation}\label{eq:pme}
\pa_t n_\gamma - \div(n_\gamma \na p_\gamma) = n_\gamma G(p_\gamma)\quad \text{in }Q, \qquad n_\gamma(0)=n_\gamma^0\quad\text{in }\mathbb{R}^N,
\end{equation}
where the density and the pressure are related by the law of state
\begin{equation}
\label{eq:law.of.estate}
p_\gamma=P_\gamma(n_\gamma),\qquad P_\gamma(n)=n^\gamma,\quad \gamma>1,
\end{equation}
and the approximate initial data satisfy
\begin{equation}
\label{eq:approximation.initial.data}
n_\gamma^0\geq 0 , \quad  P_\gamma(n^0_\gamma)\leq p_M, \quad  n_\gamma^0\in BV(\mathbb{R}^N),\qquad
\lim_{\gamma\to\infty}n_\gamma^0=n^0\quad\text{in }BV(\mathbb{R}^N).
\end{equation}
Indeed, as $\gamma\to \infty$, both $n_\gamma$ and $p_\gamma$ converge strongly in $L^1(Q_T)$ (for all $T>0$) respectively to functions $n\in C\big([0,\infty);L^1(\mathbb{R}^N)\big)$ and $p$ satisfying~\eqref{eq:main}--\eqref{eq:initial.data}, and
\begin{equation}
n, \; p\in BV(Q_T), \qquad p\le p_M, \qquad \int_{Q_T} |\nabla p|^2 \leq C(T);
\end{equation}
see~\cite{PQV}.
In what follows, we will denote solutions obtained through this approximation procedure as \emph{stiff limit solutions}.

\medskip

\noindent\emph{Remarks. } (a) If~\eqref{eq:condition.initial.data} holds, an approximating family $\{n_\gamma^0\}$ with the required properties always exist: take for example $n^0_\gamma=p_M^{1/\gamma}n^0$.

\noindent (b) If,  in addition to~\eqref{eq:condition.initial.data}, the initial data are compactly supported, there are approximating families such that $\mathop{\rm supp\,}n^0_\gamma\subset B_R(0)$ for some $R>0$ and all $\gamma>1$. Stiff limit solutions corresponding to these special families are compactly supported in space for all times, and lie within a natural class in which there is uniqueness; see~\cite{PQV} for the details. Hence, they are all the same, no matter the approximating family.

\noindent (c) The class of initial data under consideration contains the subclass of indicator functions of sets of finite perimeter (Cacciopoli sets), which is specially important for the application that we have in mind.

\medskip

The main aim of this paper is to prove that stiff limit  solutions to~\eqref{eq:main}--\eqref{eq:initial.data} are solutions to a Hele-Shaw type free boundary problem, as suggested in~\cite{PQV}.

We first notice that we can rewrite \eqref{eq:pme} as
\begin{equation}\label{eq:pmep}
\pa_t p_\gamma =\gamma p_\gamma\big(\Delta p_\gamma + G(p_\gamma)\big) + |\na p_\gamma|^2 .
\end{equation}
Hence, in the limit we expect to have the complementarity relation
\begin{equation}
\label{eq:complementarity.relation}
p(\Delta p+G(p))=0.
\end{equation}
In fact, in~\cite{PQV} this was proved to be true in a certain $H^1$ sense. On the other hand, where $p=0$ we should have $\partial_t n=nG(0)$, and hence an exponential growth for $n$; see~\eqref{eq:main}. Our first result, proved in Section~\ref{sect:the.set.Omega(t)}, shows that these expected results indeed hold true.

\begin{theorem}[Structure of the solution]
\label{thm:structure.solution}
Let $(n,p)$ be a stiff limit solution to~\eqref{eq:main}--\eqref{eq:initial.data}.
Then, for all $t>0$, there exists a set $\Omega(t)$ such that
$$
\begin{array}{l}
p(t)>0, \quad n(t)=1 \quad \text{a.e.~in }\Omega(t),
\\[8pt]
\Delta p(t) + G(p(t)) = 0\quad  \mbox{in } \mathcal D'(\mathrm{Int}(\Omega(t))),
\\[8pt]
p(t)=0,\quad n(t)=\text{\rm e}^{G(0)t} n^0 \quad \text{a.e.~in }\mathbb{R}^N\setminus \Omega(t).
\end{array}
$$
\label{thm:main1}
\end{theorem}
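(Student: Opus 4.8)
The plan is to take $\Omega(t)$ to be the positivity set of the pressure, $\Omega(t):=\{x\in\RR^N:\ p(x,t)>0\}$, defined through a fixed good representative of $p(\cdot,t)$ (available for a.e.\ $t$, and for every $t>0$ after invoking the regularity of the solution). With this choice ``$p(t)>0$ on $\Omega(t)$'' and ``$p(t)=0$ on $\RR^N\setminus\Omega(t)$'' hold by construction, and ``$n(t)=1$ a.e.\ on $\Omega(t)$'' is the contrapositive of the Hele--Shaw graph condition~\eqref{eq:Hele-Shaw.graph}; moreover the first and third lines of the statement force $\Omega(t)=\{p(t)>0\}$ up to a null set, so this choice is essentially the only one. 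Hence the two statements requiring work are the complementarity relation inside $\mathrm{Int}(\Omega(t))$ and the exponential formula for $n$ on $\RR^N\setminus\Omega(t)$.

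For the complementarity relation I would start from the $H^1$-version proved in~\cite{PQV}, which I read as $p\,(\Delta p+G(p))=0$ in $\DD'(Q)$. Since $n\in BV(Q_T)$, $\pa_t n$ is a Radon measure, hence so is $\Delta p=\pa_t n-nG(p)$ (because $nG(p)\in L^\infty$), and therefore $\Delta p(t)+G(p(t))$ is a Radon measure for a.e.\ $t$. Fixing such a $t$ and $\psi\in C_c^\infty(\mathrm{Int}(\Omega(t)))$, so that $p(t)>0$ on $\supp\psi$, I would test the sliced relation against $p(t)\,\psi/(p(t)\vee\delta)\in H^1_c$: its product with $p(t)$ converges boundedly and pointwise to $\psi$ on $\mathrm{Int}(\Omega(t))$ as $\delta\to0$, so dominated convergence against the measure $\Delta p(t)+G(p(t))$ gives $\langle\Delta p(t)+G(p(t)),\psi\rangle=0$. (Inside $\mathrm{Int}(\Omega(t))$ elliptic regularity then upgrades $p(t)$ to a genuinely regular solution, so the choice of representative is immaterial there.)

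The exponential formula is the heart of the matter, and the plan has three steps. \textbf{(a)} Show that $\pa_t n\ge0$ in $\DD'(Q)$, i.e.\ $t\mapsto n(x,t)$ is non-decreasing for a.e.\ $x$; this is not inherited term by term from the $n_\gamma$ (the compressible densities may decrease), but it should persist in the stiff limit because the cap $n\le1$ removes any excess mass: combining $\pa_t n=\Delta p+nG(p)$ with the complementarity relation and with $n=1$ a.e.\ on $\{p>0\}$, the equation becomes $\pa_t n=G(0)\,n\,\mathbf 1_{\{p=0\}}+(\Delta p)\lfloor\{p=0\}$ as measures, the remaining singular term being supported on the free boundary and non-negative (it records the jump of $n$ from below $1$ to $1$ as the front passes). \textbf{(b)} Deduce that, for a.e.\ $x\notin\Omega(t)$, one has $p(x,s)=0$ for \emph{every} $s\in[0,t]$: the set $\{n=1,\,p=0\}$ is space-time negligible (there $\pa_t n\ge G(0)n>0$ would force $n>1$), so $\Omega(t)=\{n(t)=1\}$ up to null sets, and monotonicity then gives, for a.e.\ $x$ with $p(x,t)=0$ (hence $n(x,t)<1$), that $n(x,s)\le n(x,t)<1$, whence $p(x,s)=0$ by~\eqref{eq:Hele-Shaw.graph}. \textbf{(c)} Integrate the equation along the pencil $\{x\}\times[0,t]$: disintegrating the measure $\pa_t n$ in the variable $x$ (legitimate once $n(x,\cdot)$ is monotone, so that $\pa_t n(x,\cdot)$ is a finite non-negative measure whose total mass $\le1-n^0(x)$ is $x$-integrable), the slice equation on $[0,t]$ reads $\pa_t n(x,\cdot)=G(0)\,n(x,\cdot)\,ds+(\text{free-boundary part})$, and the free-boundary part carries no mass on $[0,t]$, since an atom there at some $s_0\le t$ would put $x$ into the positivity set for $s>s_0$, contradicting $p(x,t)=0$. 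Thus $n(x,t)=n^0(x)+G(0)\int_0^t n(x,s)\,ds$, and Gr\"onwall yields $n(x,t)=\mathrm{e}^{G(0)t}n^0(x)$.

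The hard part will be step~\textbf{(a)} together with the measure-theoretic bookkeeping in~\textbf{(c)}: establishing the time-monotonicity of the stiff limit (it is not an automatic byproduct of the construction) and controlling the singular, free-boundary part of $\Delta p$. This is exactly the competition emphasized in the abstract, between the growth due to the source, which keeps the initial singularities, and the free boundary, which invades the domain with a regularizing effect and may, in particular, create new islands at singular times. A secondary technical point is to upgrade the a.e.-in-$t$ statements (the $H^1$ regularity of $p(t)$ and the complementarity relation) to every $t>0$, which I would handle via $n\in C([0,\infty);L^1(\RR^N))$ together with a monotone limit in $t$.
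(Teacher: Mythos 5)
Your overall skeleton (take $\Omega(t)=\{p(t)>0\}$, get $n=1$ there from the graph, prove complementarity in the interior, prove $n=\mathrm e^{G(0)t}n^0$ outside) matches the theorem, but the two steps you yourself flag as ``the hard part'' — (a) and the bookkeeping in (c) — are exactly the crux, and as written they do not go through. Concretely: the claim that $(\Delta p)\lfloor\{p=0\}$ is a non-negative measure supported on the free boundary is not a consequence of anything available; the limiting Aronson--B\'enilan-type estimate only gives $\Delta p+G(p)\ge 0$, i.e.\ $\Delta p\ge -G(0)$ on $\{p=0\}$, and in fact restricting the measure $\Delta p$ to the non-open, only-a.e.-defined set $\{p=0\}$ is itself ill-posed at the available regularity ($\Delta p$ may charge Lebesgue-null sets). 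In (b), the implication ``$p(x,t)=0\Rightarrow n(x,t)<1$'' fails precisely on $\{n^0=\mathrm e^{-G(0)t}\}$, which can have positive measure for exceptional times $t$, while the theorem is asserted for \emph{all} $t$. In (c), ``an atom of the singular part at $s_0$ puts $x$ into the positivity set'' has no justification: the graph gives $p>0\Rightarrow n=1$ but not the converse, so a jump of $n$ to $1$ need not be accompanied by $p>0$; and slicing $\Delta p$ along time-lines $\{x\}\times[0,t]$ has no meaning for a function that is only $H^1$ in space with a space-time measure Laplacian. Also, your complementarity argument multiplies the measure $\Delta p(t)+G(p(t))$ by the discontinuous $H^1$ function $p(t)$ (and pairs it with $H^1_c$ rather than continuous test functions), which is not licensed by the $H^1$-form of the relation in \cite{PQV}.

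The paper avoids all of this with two tools absent from your proposal. First, the limits of the estimates of Lemma~\ref{lem:delta}, namely $\partial_t p\ge 0$ and $\Delta p+G(p)\ge 0$ in $\mathcal D'(Q)$, yield a pointwise representative of $p$ that is monotone and right-continuous in $t$ (Lemma~\ref{lem:pointwise}); this defines $\Omega(t)$ for every $t$, gives your step (b) for free, and gives the complementarity in $\mathrm{Int}(\Omega(t))$ simply because $n=1$ a.e.\ on $\{p(t_0)>0\}\times(t_0,\infty)$ so that \eqref{eq:main} reads $\Delta p+G(p)=0$ there, then one slices using right-continuity — no truncation $p\psi/(p\vee\delta)$ needed. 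Second, and decisively, the exponential formula is obtained by integrating in time \emph{first}: the Baiocchi-type variable $w(t)=\int_0^t\mathrm e^{-G(0)s}p(s)\,ds$ satisfies $\Delta w(t)=\mathrm e^{-G(0)t}n(t)-n^0+\int_0^t\mathrm e^{-G(0)s}n(s)(G(0)-G(p(s)))\,ds$ a.e.\ and lies in $W^{2,q}(\mathbb{R}^N)$, so $\Delta w(t)=0$ a.e.\ on $\{w(t)=0\}$; since $p(s)=0$ a.e.\ there for $s<t$ (monotonicity of $p$), the identity collapses to $n(t)=\mathrm e^{G(0)t}n^0$ on $\{w(t)=0\}$, and $\Omega(t)\setminus\{w(t)>0\}\subset\{n^0=\mathrm e^{-G(0)t}\}$ handles the discrepancy between the two positivity sets. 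This time integration is exactly what replaces the delicate control of the singular part of $\Delta p$ that your steps (a) and (c) require but do not supply; without it (or an equivalent substitute), the heart of your argument remains unproved.
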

Here $\mathrm{Int}(\Omega(t))$ denotes the interior of the set $\Omega(t)$ (we will see later that $\Omega(t)$ is open for almost every $t>0$). Note that the variational form of the elliptic equation for $p$ is not available at this stage because we do not know that $p \in H^1_0\big(\mathrm{Int}(\Omega(t))\big)$ by lack of regularity of $\Omega(t)$.

We next focus our attention on the regularity of the \emph{free boundary}, $\partial\Omega(t)$.  The main new difficulty arises from the competition between the growth due to the source $G(p)$ when $n<1$, which may generate new islands and also keeps the initial singularities, and the free boundary which invades the domain with a regularizing effect.  Nevertheless, we will prove that if the  region where $p(t)=0$ has  positive Lebesgue density at a point of the free boundary, then it is smooth in a neighbourhood, if a certain non-degeneracy condition is satisfied.  The precise result, obtained in Section~\ref{sect:regularity.w.Omega}, reads as follows
\begin{theorem}[Local regularity of the free boundary]
Assume $N\leq 3$.
Let  $(n,p)$ and $\Omega(t)$ be as in Theorem~\ref{thm:main1}.
Given $x\in\partial\Omega(t)$, if
\begin{equation}
\label{eq:ndg}
n^0\text{\rm e}^{G(0)t} <1 \quad\text{in }B_\delta(x)\quad\text{for some }\delta>0,
\end{equation}
then:
\item[(i)] $\pa\Omega(t)\cap B_\delta(x)$ has finite $(N-1)-$Hausdorff measure;
\item[(ii)] there is a modulus of continuity $\sigma$, and a value $r_0\in(0,\delta)$ such that, if
\begin{equation}
\label{eq:condition.regularity.free.boundary}
\frac{\mathop{\rm MD}\big(\{p(t)=0\}\cap B_r(x)\big)}{r}>\sigma(r)\quad\text{for some }r<r_0,
\end{equation}
then for some $\rho>0$, $\partial\Omega(t)\cap B_\rho(x)$ is a $C^1$-graph. Here \lq\lq\text{\rm MD}'' stands for the minimal diameter\footnote{Given a set $S\in\mathbb{R}^N$, the
minimum diameter of $S$, denoted $\mathop{\rm MD}(S)$, is the infimum among the distances
between pairs of parallel hyperplanes enclosing $S$. This quantity measures the \lq\lq flatness'' of a set.}.
\label{thm:main2}
\end{theorem}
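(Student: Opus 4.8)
The plan is to recognise $w:=p(t)$, restricted to $B_\delta(x)$, as the solution of a semilinear obstacle problem and then feed it into the classical free-boundary regularity theory. First I would exploit the non-degeneracy hypothesis~\eqref{eq:ndg}: by the last line of Theorem~\ref{thm:main1}, on $\{p(t)=0\}$ one has $n(t)=n^0\mathrm{e}^{G(0)t}<1$ throughout $B_\delta(x)$, so such points cannot belong to $\Omega(t)$ (where $n(t)=1$); hence, up to a Lebesgue-null set, $\{w=0\}\cap B_\delta(x)=B_\delta(x)\setminus\Omega(t)$ and $\{w>0\}\cap B_\delta(x)=\Omega(t)\cap B_\delta(x)$. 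Combining this with the interior equation $\Delta w+G(w)=0$ in $\mathrm{Int}(\Omega(t))$ from Theorem~\ref{thm:main1}, the bounds $0\le w\le p_M$, and the complementarity relation of~\cite{PQV}, I would show that $w$ is the unique minimiser of
\[
J(u)=\int_{B_\delta(x)}\Bigl(\tfrac12|\na u|^2-F(u)\Bigr),\qquad F(s):=\int_0^s G(\tau)\,d\tau,
\]
over $\{u\in H^1(B_\delta(x)):u\ge 0,\ u-w\in H^1_0(B_\delta(x))\}$. Strict convexity of $J$, hence uniqueness, is exactly the structural assumption $G'<0$, which makes $-F$ convex. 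Note that $G\ge G(p_M)=0$ on $[0,p_M]$ with $G(0)>0$, so the Euler--Lagrange right-hand side $G(w)$ is bounded, Lipschitz in $w$, and bounded below by a positive constant near $\partial\Omega(t)$, where $w$ is small.

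With this structure in hand, part (i) follows from the classical theory of the obstacle problem (Caffarelli; Caffarelli--Shahgholian; Blank): $w$ is locally $C^{1,1}$, it satisfies the quadratic non-degeneracy estimate $\sup_{B_r(y)}w\ge c\,r^2$ at every free-boundary point $y$ near $x$, and consequently $\{w=0\}$ has locally finite perimeter, so that $\partial\Omega(t)\cap B_\delta(x)$ has finite $(N-1)$-Hausdorff measure (shrinking $\delta$ at the outset, or a routine covering, to pass from interior balls to $B_\delta(x)$).

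For part (ii) I would invoke Caffarelli's dichotomy for obstacle-problem free boundaries together with its quantitative \lq\lq thickness'' criterion: a free-boundary point is regular precisely when the coincidence set is not too thin there in the minimal-diameter sense, and there exist a modulus of continuity $\sigma$ and a radius $r_0\in(0,\delta)$, depending only on $N$, $p_M$, $G$ and $\delta$, such that if $\mathrm{MD}(\{w=0\}\cap B_r(x))>r\,\sigma(r)$ for some $r<r_0$ --- that is, \eqref{eq:condition.regularity.free.boundary} --- then the coincidence set has density bounded below in a full neighbourhood of $x$, the free boundary is Lipschitz there, and the flatness-improvement / boundary-Harnack argument upgrades it to a $C^1$ (indeed $C^{1,\alpha}$) graph on some $B_\rho(x)$.

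The main obstacle is the very first step: identifying $w$ with the obstacle-problem minimiser globally in $B_\delta(x)$ amounts to controlling $\Delta w$ across $\partial\Omega(t)$ --- in particular excluding a singular part of $-\Delta w$ concentrated on the free boundary --- before any regularity of $\partial\Omega(t)$ is known. This is where I expect the dimensional restriction $N\le 3$ to enter, through the a priori information inherited from~\cite{PQV} ($w\in BV(B_\delta(x))\cap H^1(B_\delta(x))$, $\Delta w$ bounded off the free boundary, continuity of $w$) and the Sobolev embeddings used to combine these ingredients. A secondary, purely technical, point is that some of these bounds hold a priori only for a.e.\ $t$; one reconciles this using the time-continuity $n\in C([0,\infty);L^1(\mathbb{R}^N))$ and the fact that the structure of Theorem~\ref{thm:main1} holds for every $t>0$, so the argument can be run at the prescribed time.
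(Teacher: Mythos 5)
Your proposal goes wrong at the very first step, and the error is structural rather than technical.  You set $w:=p(t)$ and try to exhibit $w$ as the minimiser of $J(u)=\int\bigl(\tfrac12|\nabla u|^2-F(u)\bigr)$ over $\{u\ge 0\}$.  The Euler--Lagrange inequality for that minimisation reads $-\Delta u-G(u)\ge 0$, i.e.\ $\Delta u+G(u)\le 0$; but the pressure satisfies the opposite inequality, $\Delta p(t)+G(p(t))\ge 0$ in $\mathcal{D}'(\mathbb{R}^N)$ (inequality \eqref{eq:delta}, Corollary~\ref{cor:pdelta}).  The sign is not a curable slip: $\Delta p(t)+G(p(t))\chi_{\Omega(t)}=\mu_t$ is a genuinely non-negative measure concentrated on $\partial\Omega(t)$ (Lemma~\ref{lem:mu}), so $\Delta p(t)$ has a positive singular part across the free boundary, $p(t)$ is not $C^{1,1}$ across $\partial\Omega(t)$, and $p(t)$ is certainly not the solution of any obstacle problem of the type you write.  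Precisely to dodge this, the paper uses the Baiocchi variable $w(x,t)=\int_0^t e^{-G(0)s}p(x,s)\,ds$, which integrates out the jump of $\nabla p$ across the moving interface and satisfies $\Delta w(t)=\tilde F(t)\le F(t)$ with $\Delta w(t)=F(t)\chi_{\{w(t)>0\}}$, i.e.\ a bona fide obstacle problem (Theorem~\ref{thm:obstacle}).

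Two secondary issues follow from the same misidentification.  First, the non-degeneracy needed for the free-boundary regularity is positivity of the obstacle-problem right-hand side.  In your set-up that right-hand side would be $G(w)\approx G(0)>0$, which is automatically positive, so the hypothesis $n^0e^{G(0)t}<1$ would never enter the argument --- a clear sign that something is off.  In the paper the right-hand side is $F(t)=e^{-G(0)t}-n^0+\int_0^t e^{-G(0)s}(G(0)-G(p(s)))\,ds$, and the estimate $F(t)\ge e^{-G(0)t}\bigl(1-n^0e^{G(0)t}\bigr)$ is exactly why \eqref{eq:ndg} is the right non-degeneracy hypothesis.  Second, you guess that $N\le 3$ enters through Sobolev embeddings of $BV\cap H^1$ bounds on $p$; in fact it enters through the $L^4$ bound on $\nabla p$ from Proposition~\ref{prop:alpha-HS}, which after integrating in time gives $\nabla F(t)\in L^4$ away from $\supp n^0$ (Lemma~\ref{lem:Freg}); one needs $4>N$ so that $F(t)\in W^{1,p}$ for some $p>N$, the hypothesis under which Caffarelli's regularity theorem for $\Delta v=f\chi_{\{v>0\}}$ (the one actually cited, \cite{C0,B}) applies.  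Once you replace $p(t)$ by the Baiocchi variable and replace $F(\cdot)=\int_0^\cdot G$ by the paper's $F(t)$, your outline of parts (i) and (ii) --- quadratic non-degeneracy of $w$, finite $(N-1)$-Hausdorff measure, Caffarelli's thickness criterion in terms of minimal diameter, and the flatness-improvement argument to $C^1$ --- matches the paper's route.
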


\noindent\emph{Remark. } By virtue of the inequality
$$
\frac{\mathop{\rm MD}\big(\mathcal{A}\cap B_r(x)\big)}{r}\ge c\;\frac{|\mathcal{A}\cap B_r(x)|}{|B_r(x)|},
$$
condition~\eqref{eq:condition.regularity.free.boundary} can be substituted by the following simpler criterion,
\begin{equation}
\label{eq:alternative.condition.regularity.free.boundary}
\lim_{r\to 0}\frac{|\{p(t)=0\}\cap B_r(x)|}{|B_r(x)|}>0.
\end{equation}
Thus, if the free boundary is Lipschitz, it is $C^1$. However, cusp-like singularities are possible.

\medskip

The proofs of Theorems \ref{thm:main1}--\ref{thm:main2} make use of a
well known connection between Hele-Shaw free boundary problems and obstacle problems. Indeed, given a classical solution of the standard Hele-Shaw problem, its time integral (the so-called Baiocchi variable) solves an elliptic obstacle problem.
This led to the notion of variational solutions of the Hele-Shaw problem,  introduced in
\cite{EJ} in the early 80s. This connection has been extended to weak solutions in~\cite{GQ-2001,GQ}, and to viscosity solutions in~\cite{KM}.
Here, we will need to adapt the general approach to account for the source term $G(p)$, but the idea is similar. More precisely, the key tool throughout the paper is the function
\begin{equation*}
\label{eq:Baiocchi.variable}
 w (x,t)=\int_0^t \textrm{e}^{-G(0)s} p(x,s)\, ds,
\end{equation*}
which will be proved in Section~\ref{sect:regularity.w.Omega} to solve an obstacle problem. The set $\widetilde\Omega(t)=\{w(t)>0\}$, and how it relates to $\Omega(t)=\{p(t)>0\}$, both things studied in Section~\ref{sect:the.set.Omega(t)}, will play an important role.

The proof of Theorem \ref{thm:main2} will also require some further regularity for the pressure $p$. This is obtained in Section~\ref{sect:regularity.pressure}, where we prove that $p\in L^2((0,T);W^{1,4}(\mathbb{R}^N))$, which yields the necessary smoothness if $N\le 3$; see formula~\eqref{eq:alpha1HS} below.
Whether the result holds in dimensions $N>3$ is still an open question at this stage, and would require higher integrability estimates for $\na p$.

To complete the characterization of problem~\eqref{eq:main}--\eqref{eq:initial.data} as a Hele-Shaw type problem, we should give a rule for the evolution of the set $ \Omega (t) = \{p(t)>0\}$. Equation~\eqref{eq:pmep} suggests that we should have $\partial_t p=|\nabla p|^2$ at points of the free boundary $\partial\Omega(t)$, or what is the same,  the normal velocity $V$ of the free boundary should be given by Stefan's condition (Darcy's law), $V=|\nabla p|$. In fact, as a simple consequence of Theorem~\ref{thm:structure.solution}, if  $n^0=\chi_{\Omega_0}\in BV(\mathbb{R}^N)$, then  $n(t)=\chi_{\Omega(t)}$ for all $t>0$, and the equation \eqref{eq:main} is a weak statement of the motion of $\Omega(t)$ with the Stefan rule for the free boundary.
However, as we have already seen, the equation for the pressure does not carry all the information of the evolution, and the equation for the velocity of the free boundary has to be modified to take into account \emph{precancerous} regions, where $0<n<1$, if the initial datum is not a characteristic function.  This is the content of the next theorem which is proved in Section~\ref{sect:velocity.free.boundary}.

\begin{theorem} [The Stefan condition]
Under the conditions of  Theorem~\ref{thm:main1}, $p$ satisfies the following equation:
\begin{equation}\label{eq:hsweak0}
 (1- \textrm{e}^{G(0)t} n^0 )  \pa_t \chi_{\Omega(t)}   = \Delta p(t) + G(p(t))\chi_{\Omega(t)} \quad \text{in } \mathcal D'(Q).
\end{equation}
In particular, the normal velocity $V_n(x,t)$ of $\pa\Omega(t)$ at a point $x$ of the free boundary satisfies (in a sense to be made precise later):
\begin{equation}
\label{eq:Stefan.condition}
V_n(x,t) = \frac{|\na p(x,t)|}{1-\text{\rm e}^{G(0)t}n^0(x)},\quad x\in\partial\Omega(t).
\end{equation}
\label{thm:main3}
\end{theorem}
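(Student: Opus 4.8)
The plan is to derive \eqref{eq:hsweak0} by combining the structure result of Theorem~\ref{thm:main1} with the complementarity relation \eqref{eq:complementarity.relation}, and then to read off the pointwise Stefan condition \eqref{eq:Stefan.condition} from \eqref{eq:hsweak0} when the free boundary is regular (which, in dimensions $N\le 3$ and under the non-degeneracy assumption, is supplied by Theorem~\ref{thm:main2}). First I would start from the weak formulation of \eqref{eq:main}, namely $\partial_t n = \Delta p + nG(p)$ in $\mathcal D'(Q)$, and split the right-hand side according to the decomposition $\mathbb R^N = \Omega(t)\cup(\mathbb R^N\setminus\Omega(t))$. On $\mathbb R^N\setminus\Omega(t)$ we have $p(t)=0$ and $n(t)=\mathrm e^{G(0)t}n^0$, so there $nG(p)=\mathrm e^{G(0)t}n^0\,G(0)$ and $\Delta p(t)=0$ in the sense that $p(t)$ vanishes there; on $\Omega(t)$ we have $n(t)=1$, so $nG(p)=G(p)\chi_{\Omega(t)}$ on that part. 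Thus $\partial_t n = \Delta p + G(p)\chi_{\Omega(t)} + G(0)\,\mathrm e^{G(0)t}n^0\,\chi_{\mathbb R^N\setminus\Omega(t)}$ in $\mathcal D'(Q)$.

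Next I would compute $\partial_t n$ directly from the structure of $n$. Writing $n(t) = \chi_{\Omega(t)} + \mathrm e^{G(0)t}n^0\,\chi_{\mathbb R^N\setminus\Omega(t)} = \chi_{\Omega(t)}\big(1 - \mathrm e^{G(0)t}n^0\big) + \mathrm e^{G(0)t}n^0$, and differentiating in time in the distributional sense, one gets
\begin{equation*}
\partial_t n = \big(1 - \mathrm e^{G(0)t}n^0\big)\,\partial_t\chi_{\Omega(t)} + G(0)\,\mathrm e^{G(0)t}n^0\,\big(1 - \chi_{\Omega(t)}\big).
\end{equation*}
Here I use that $n^0$ is independent of $t$ so that $\partial_t\big(\mathrm e^{G(0)t}n^0\big)=G(0)\mathrm e^{G(0)t}n^0$, and that $\partial_t\big(\mathrm e^{G(0)t}n^0\chi_{\Omega(t)}\big) = G(0)\mathrm e^{G(0)t}n^0\chi_{\Omega(t)} + \mathrm e^{G(0)t}n^0\,\partial_t\chi_{\Omega(t)}$. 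Subtracting the two expressions for $\partial_t n$ and cancelling the common term $G(0)\mathrm e^{G(0)t}n^0\chi_{\mathbb R^N\setminus\Omega(t)}$ yields precisely \eqref{eq:hsweak0}. The delicate point in this step is justifying the product rule for $\mathrm e^{G(0)t}n^0\chi_{\Omega(t)}$ in $\mathcal D'$: since $\chi_{\Omega(t)}\in BV$ in time-space (it is controlled by $n\in BV(Q_T)$, as $\chi_{\Omega(t)} = \mathbf 1_{\{n>0,\,p>0\}}$ up to the relevant structure, or more directly via $w$ and $\widetilde\Omega$) and $\mathrm e^{G(0)t}n^0$ is Lipschitz in $t$ with values in $BV_x$, the product is BV and Leibniz holds; this needs the measure-theoretic care of Section~\ref{sect:the.set.Omega(t)}, in particular the a.e.-in-$t$ identification of $\Omega(t)$ with $\widetilde\Omega(t)=\{w(t)>0\}$ and its openness.

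For the pointwise statement \eqref{eq:Stefan.condition}, I would localize \eqref{eq:hsweak0} near a free boundary point $x$ where, by Theorem~\ref{thm:main2}, $\partial\Omega(t)\cap B_\rho(x)$ is a $C^1$ hypersurface moving with normal velocity $V_n$. There $\partial_t\chi_{\Omega(t)}$ is the surface measure on $\partial\Omega(t)$ weighted by $V_n$ (with the inward/outward sign convention), while $\Delta p(t)$, since $p(t)$ vanishes on $\partial\Omega(t)$ and $p(t)=0$ outside, contributes $|\nabla p(t)|$ times the same surface measure (the jump of the normal derivative across the free boundary), and $G(p)\chi_{\Omega(t)}$ is absolutely continuous, hence contributes nothing to the singular part. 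Matching the singular parts supported on $\partial\Omega(t)$ gives $\big(1-\mathrm e^{G(0)t}n^0(x)\big)V_n(x,t) = |\nabla p(x,t)|$, which is \eqref{eq:Stefan.condition}; note $1-\mathrm e^{G(0)t}n^0(x)>0$ there by the non-degeneracy hypothesis \eqref{eq:ndg}, so the division is legitimate. The main obstacle is the first part — the rigorous distributional product rule and the passage between $\Omega(t)$ and $\widetilde\Omega(t)$ — rather than the final matching of singular parts, which is standard once $C^1$ regularity of the free boundary and the $W^{1,4}$ (hence continuity for $N\le3$) regularity of $p$ from Section~\ref{sect:regularity.pressure} are in hand.
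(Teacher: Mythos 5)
Your derivation of \eqref{eq:hsweak0} is exactly the paper's argument: write $n(t)=\chi_{\Omega(t)}+\mathrm e^{G(0)t}n^0(1-\chi_{\Omega(t)})$ (formula \eqref{eq:defnb}, from Theorem~\ref{thm:main1}), compute $n(t)G(p(t))=G(p(t))\chi_{\Omega(t)}+G(0)\mathrm e^{G(0)t}n^0(1-\chi_{\Omega(t)})$ using $p(t)=0$ off $\Omega(t)$, differentiate the structure formula in time, and cancel the common term against \eqref{eq:main}. The Leibniz-rule worry you raise is real but lighter than you make it: since $t\mapsto\mathrm e^{G(0)t}$ is smooth and $n^0$ is time-independent, the only point is the meaning of the product of the bounded Borel function $1-\mathrm e^{G(0)t}n^0$ with $\pa_t\chi_{\Omega(t)}$, which is a nonnegative measure because the sets $\Omega(t)$ are nondecreasing; no appeal to $BV$ in space-time or to the identification $\Omega(t)=\widetilde\Omega(t)$ a.e.\ is needed for this step, and the paper treats it exactly this way.

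Where you genuinely diverge is in how \eqref{eq:Stefan.condition} is extracted from \eqref{eq:hsweak0}, and here your route does not prove the statement in the sense the paper intends. The theorem's second claim is deliberately left \lq\lq to be made precise later,'' and the paper makes it precise in two ways that avoid any regularity of the free boundary: (i) a weak (distributional) formulation obtained by testing \eqref{eq:hsweak0} with the level-set cutoffs $b_{\eps,\delta}(p)$, integrating by parts, and passing to the limit (then the coarea formula turns the strip integrals over $\{\delta<p<\delta+\eps\}$ into surface integrals), assuming only continuity of $p$ (e.g.\ $N\le3$); and (ii) a measure-theoretic formulation, Lemma~\ref{lem:mu}, showing that $\mu_t=\Delta p(t)+G(p(t))\chi_{\Omega(t)}$ is a nonnegative Radon measure supported on $\pa\Omega(t)$, so that \eqref{eq:hsweak0} reads $V_n\,\mathcal H^{N-1}|_{\pa\Omega(t)}=\frac{|\na p|}{1-\mathrm e^{G(0)t}n^0}\,\mathcal H^{N-1}|_{\pa\Omega(t)}$ in measure form. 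Your matching of singular parts across a locally $C^1$ interface is the \emph{formal} interpretation the paper itself records, and to run it rigorously you need hypotheses not granted by Theorem~\ref{thm:main1}: the $C^1$ regularity at $x$ comes from Theorem~\ref{thm:main2} only under the nondegeneracy \eqref{eq:ndg} \emph{and} the flatness condition \eqref{eq:condition.regularity.free.boundary} (which can fail, e.g.\ at cusps or when islands merge), and, more seriously, identifying the singular part of $\Delta p(t)$ with $|\na p(t)|\,d\mathcal H^{N-1}$ requires $\na p(t)$ to have a trace from inside $\Omega(t)$, i.e.\ Lipschitz regularity of $p$ up to the free boundary, which the paper only establishes for large times in Section~\ref{sect:eventual.regularity}; likewise $\pa_t\chi_{\Omega(t)}=V_n\,d\mathcal H^{N-1}dt$ presupposes some time regularity of the moving boundary. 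So your argument is a valid reading of \eqref{eq:hsweak0} at regular points (and then the nondegeneracy indeed legitimizes the division by $1-\mathrm e^{G(0)t}n^0$), but as a proof of Theorem~\ref{thm:main3} it should be replaced by, or supplemented with, one of the paper's two weak formulations, which is precisely what buys the statement under the sole assumptions of Theorem~\ref{thm:main1}.
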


Notice that the formula for the normal velocity at $x$ coincides with the classical Stefan condition when $n^0(x)=0$. Therefore,  \eqref{eq:main}--\eqref{eq:initial.data} is indeed a weak formulation for a Hele-Shaw type free boundary problem if $n^0=\chi_{\Omega_0}\in BV(\mathbb{R}^N)$, as conjectured in~\cite{PQV}.

In a recent paper \cite{KP}, Kim and Pozar study the same problem and prove in particular that
\eqref{eq:Stefan.condition} holds in the viscosity sense. Their approach relies on pointwise arguments and comparison principle methods which are very different from the approach developed in the present paper.
We note that by proceeding as in \cite{KM}, it should also be possible to  show that \eqref{eq:Stefan.condition} holds in the viscosity sense by using the Baiocchi variable~$w$ and the obstacle problem formulation that we introduce in this paper. We do not pursue this here.
Instead, we will show that equation \eqref{eq:hsweak0} implies a weak (distributional) and a measure theoretical interpretation to~\eqref{eq:Stefan.condition} (see Section~\ref{sect:velocity.free.boundary}). We will not prove that these formulations are equivalent to the notion of viscosity solutions developed by
Kim and Pozar. But they are
simpler to derive and well adapted (and sufficient) to our purpose.

We next show, in Section~\ref{sect:eventual.regularity}, that conditions~ \eqref{eq:ndg} and~\eqref{eq:alternative.condition.regularity.free.boundary} are fulfilled at all points of the free boundary $\partial\Omega(t)$ for all $t$ large enough, thus proving that the free boundary eventually becomes smooth:
\begin{theorem}[Eventual regularity of the free boundary]
Under the assumptions of~Theorem~\ref{thm:main2}, if in addition $n^0_\gamma \subset B_R(0)$ for some $R>0$, and $n^0\ge \alpha\chi_{B_r(\bar x)}$ for some $\alpha\in(0,1]$, $r>0$, and $\bar x\in\mathbb{R}^N$, then there is a time $T_0$ such that the free boundary $\partial\Omega(t)$ is  starshaped and smooth for all $t\ge T_0$.  Consequently, $p$ is Lipschitz continuous in space and time for $t>T_0$.
\label{thm:main4}
\end{theorem}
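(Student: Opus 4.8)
The plan is to exploit the source term to show that $\Omega(t)$ eventually swallows the whole support of $n^0$, after which \eqref{eq:main}--\eqref{eq:initial.data} becomes a source-driven Hele-Shaw problem with no precancerous region near the free boundary, so that Theorem~\ref{thm:main2} applies at every boundary point; a comparison with radial solutions will then trap $\Omega(t)$ in a thin spherical annulus, from which starshapedness and, via the standard free boundary bootstrap, full smoothness follow.

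First I would absorb the initial bump. By Theorem~\ref{thm:main1}, $n(t)=\mathrm{e}^{G(0)t}n^0$ on $\RR^N\setminus\Omega(t)$, and since $0\le n\le1$ while $G(0)>0$ (because $G$ is decreasing with $G(p_M)=0$), every point where $n^0\ge\alpha$ must lie in $\Omega(t)$ once $t\ge t_1:=G(0)^{-1}\log(1/\alpha)$; hence $B_r(\bar x)\subset\Omega(t_1)$ (up to a null set), with $n(t_1)=1$ and $\Delta p(t_1)+G(p(t_1))=0$ there. Restarting the clock at $t_1$ and using the obstacle-problem (Baiocchi-variable) formulation together with its comparison principle, I would compare $(n,p)$ with the radial stiff limit solution issued from $\chi_{B_r(\bar x)}$: monotonicity gives $B_{\underline\rho(t)}(\bar x)\subset\Omega(t)$, where $\dot{\underline\rho}=V(\underline\rho)$ and $V(\rho)=|\na p_\rho|$ on $\partial B_\rho$ for the solution of $-\Delta p_\rho=G(p_\rho)$ in $B_\rho$, $p_\rho|_{\partial B_\rho}=0$ (the normal velocity being \eqref{eq:Stefan.condition} with $n^0=0$). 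A boundary-layer analysis, controlled by one-dimensional comparison, should show that $p_\rho$ is exponentially close to $p_M$ away from $\partial B_\rho$ and crosses an $O(1)$ layer, so that $V(\rho)\to V_\infty>0$ --- indeed $V_\infty=\sqrt{2\int_0^{p_M}G(q)\,dq}$ --- with $V(\rho)=V_\infty+O(1/\rho)$. Since finite speed of propagation gives $\Omega(t_1)\subset B_{R'}(\bar x)$ for some $R'$, and $\underline\rho(t)\to\infty$, there is a time $T_1\ge t_1$ such that $\Omega(t)\supset\overline{B_R(0)}\supset\supp n^0$ for all $t\ge T_1$.

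For $t\ge T_1$ every $x\in\partial\Omega(t)$ lies outside $\overline{B_R(0)}$, so $n^0\equiv0$ near $x$ and \eqref{eq:ndg} holds; moreover $n(t)\equiv0$ on $\RR^N\setminus\Omega(t)$, so \eqref{eq:hsweak0} reduces to $\pa_t\chi_{\Omega(t)}=\Delta p+G(p)\chi_{\Omega(t)}$ with the clean Stefan law $V_n=|\na p|$. Comparing now also with the radial solution issued from a ball $B_{R'}(\bar x)\supset\Omega(T_1)$, I would trap $B_{\underline\rho(t)}(\bar x)\subset\Omega(t)\subset B_{\overline\rho(t)}(\bar x)$; from $\tfrac{d}{dt}(\overline\rho-\underline\rho)=V(\overline\rho)-V(\underline\rho)$ and $V(\rho)=V_\infty+O(1/\rho)$, a Gronwall argument (the coefficient in the linear estimate decays like $t^{-2}$) gives $\overline\rho(t)-\underline\rho(t)\le C$, so $\Omega(t)$ lies in a spherical annulus of bounded width about $\bar x$, whose relative width tends to $0$. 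The elliptic equation then forces $p(t)$ to be close, near $\partial\Omega(t)$, to the one-dimensional boundary-layer profile, hence $\na p(t)$ to be nearly radial with magnitude near $V_\infty>0$; therefore $\partial\Omega(t)$ is a graph over the sphere about $\bar x$ with slope tending to $0$, so that for $t$ beyond some $T_0\ge T_1$ it is starshaped with respect to $\bar x$, and the complement of $\Omega(t)$ has density near $\tfrac12$ at every boundary point, giving \eqref{eq:alternative.condition.regularity.free.boundary} (hence \eqref{eq:condition.regularity.free.boundary}). Thus Theorem~\ref{thm:main2} applies at every $x\in\partial\Omega(t)$, yielding a local $C^1$ graph; the classical Hele-Shaw bootstrap (hodograph--Legendre transform, using $|\na p|>0$) upgrades this to $C^\infty$, elliptic regularity up to the smooth boundary gives $\na p\in L^\infty$, and since $\partial\Omega(t)$ moves with bounded normal velocity $|\na p|$, $p$ is Lipschitz in time as well.

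The hard part will be the uniform expansion step: proving that $V(\rho)$ tends to a \emph{strictly positive} limit with a quantitative $O(1/\rho)$ rate through the boundary-layer/one-dimensional comparison estimate, and checking that the obstacle-problem comparison principle of the earlier sections genuinely applies to the history-dependent Baiocchi variable, so that the radial barriers are admissible. Once the thin-annulus trapping is in hand, converting it into starshapedness and into the positive-density condition required by Theorem~\ref{thm:main2} is the remaining, more routine, technical point.
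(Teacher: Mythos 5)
Your opening step (using $n\le 1$ and $n(t)=\mathrm{e}^{G(0)t}n^0$ off $\Omega(t)$ to absorb the bump, then pushing the free boundary past $B_R(0)$ by comparison with the radial solution issued from $\alpha\chi_{B_r(\bar x)}$) is sound and matches the spirit of the paper's Lemma~\ref{lem:boundary.goes.away}. The genuine gap comes afterwards, in the passage from the ``thin annulus'' confinement $B_{\underline\rho(t)}(\bar x)\subset\Omega(t)\subset B_{\overline\rho(t)}(\bar x)$, $\overline\rho-\underline\rho\le C$, to starshapedness and to the density condition \eqref{eq:alternative.condition.regularity.free.boundary}. Annulus trapping is a global, Hausdorff-type statement: it does not exclude inward cusps, thin fjords of $\{p(t)=0\}$ penetrating $\Omega(t)$, or small components of the zero set inside the annulus, and therefore gives no control whatsoever on the Lebesgue density of $\{p(t)=0\}$ at an individual boundary point. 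Your bridge --- ``the elliptic equation forces $p$ near $\partial\Omega(t)$ to be close to the one-dimensional layer profile, hence $\na p$ is nearly radial with $|\na p|\approx V_\infty$'' --- presupposes exactly the boundary regularity you are trying to establish: closeness of domains does not give $C^1$ closeness of $\na p$ up to a possibly irregular free boundary, so the argument is circular at its key point. In addition, the quantitative expansion law $V(\rho)=V_\infty+O(1/\rho)$ for the radial solution, and the admissibility of your radial barriers in a comparison principle for the stiff limit (or for the Baiocchi variable), are asserted rather than proved; only the qualitative fact that the radial boundary escapes to infinity is available from \cite{PQV}, and your annulus and graph claims lean on the quantitative version. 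The final step (``bounded normal velocity, hence Lipschitz in time'') is also too quick: one needs a pointwise density estimate at boundary points plus a Gronwall and barrier argument (using that $G$ is decreasing) to get $0\le p(x,t)-p(x,s)\le K(t-s)$.

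The paper avoids all of this with a different mechanism: Alexandroff reflection (Lemma~\ref{lem:Alexandroff}) across any hyperplane missing $B_R(0)$ yields monotonicity of $p$ along every ray pointing away from $\overline{B_R(0)}$; once $\partial\Omega(t)$ has left $B_R(0)$ (Lemma~\ref{lem:boundary.goes.away}), every free boundary point $x_0$ carries a whole nonempty open cone $K_0$ on which $p(t)$ vanishes. This gives simultaneously starshapedness and a uniform positive density of $\{p(t)=0\}$ at $x_0$, with no boundary-layer asymptotics and no annulus argument, and since $x_0\notin\supp n^0$ the non-degeneracy \eqref{eq:ndg} holds, so the obstacle-problem regularity (Caffarelli) applies. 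To repair your proof you would either have to adopt such a reflection/monotonicity argument, or genuinely prove (i) the $O(1/\rho)$ radial asymptotics and (ii) a flatness-implies-regularity statement taking the annulus confinement as input --- and (ii) is precisely the obstacle-problem regularity theory, which requires a pointwise density or minimal-diameter hypothesis of the type \eqref{eq:condition.regularity.free.boundary} as input rather than as output.
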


Finally, as a complement of the convergence results in~\cite{PQV}, we study the convergence of the positivity sets $\{n_\gamma(t)>0\}$ as $\gamma\to\infty$. This analysis is performed in Section~\ref{convergence.positivity.sets}.
\begin{theorem}[Convergence of the positivity set]
Assume the hypotheses of Theorem~\ref{thm:main1}. If, in addition,  $n^0$ has compact support, and $\{n^0_\gamma>0\}\to\{n^0>0\}$ in the sense of Hausdorff distance,  then, for all $t>0$,
$\{n_\gamma(t)>0\}\to\{n(t)>0\}$ in the sense of Hausdorff distance.
\label{thm:convergence.positivity.sets}
\end{theorem}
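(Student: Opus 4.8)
The plan is to express the positivity sets through the Baiocchi‑type variables $w_\gamma$ and $w$, and then to transfer the convergence to the level of the associated obstacle problems, where estimates uniform in $\gamma$ are available. \emph{Identification of the positivity sets.} Since $0\le p_\gamma\le p_M$ and $G\ge 0$ on $[0,p_M]$ by~\eqref{eq:G}, the source $n_\gamma G(p_\gamma)$ in~\eqref{eq:pme} is non‑negative, so (a classical feature of the porous medium equation with non‑negative source) the positivity set $\{n_\gamma(t)>0\}$ is non‑decreasing in $t$. Combining this with the continuity of $n_\gamma$, with $\{n_\gamma(t)>0\}=\{p_\gamma(t)>0\}$ (recall $p_\gamma=n_\gamma^\gamma$), and with the fact that $w_\gamma(x,t)>0$ exactly when $p_\gamma(x,s)>0$ for some $s<t$, one obtains for every $t>0$
$$
\overline{\{n_\gamma(t)>0\}}=\overline{\{w_\gamma(t)>0\}}.
$$
On the limit side, Theorem~\ref{thm:structure.solution} gives $\{n(t)>0\}=\Omega(t)\cup\{n^0>0\}$ up to a null set, hence $\supp n(t)=\overline{\Omega(t)}\cup\supp n^0$; and the comparison of $\Omega(t)$ with $\widetilde\Omega(t)=\{w(t)>0\}=\bigcup_{s<t}\Omega(s)$ carried out in Section~\ref{sect:the.set.Omega(t)} shows that $\overline{\Omega(t)}$ and $\overline{\widetilde\Omega(t)}$ agree for all $t$ outside the at most countable set of singular times, the part of $\Omega(t)$ not captured by $\widetilde\Omega(t)$ at the exceptional times (new islands born by growth) lying inside $\{n^0>0\}$ and hence being absorbed by $\supp n^0$. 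The theorem thus reduces to
$$
\overline{\{w_\gamma(t)>0\}}\ \longrightarrow\ \overline{\{w(t)>0\}}\cup\overline{\{n^0>0\}}\qquad\text{in Hausdorff distance, for every }t>0,
$$
where $\{n^0>0\}$ must appear separately on the right because, unlike $w$, the function $w_\gamma$ inherits positivity from the smeared initial datum: $w_\gamma>0$ on $\{n^0_\gamma>0\}$.

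\emph{Uniform estimates for $w_\gamma$.} The finite speed of propagation for~\eqref{eq:pme}, uniform in $\gamma$ since the sets $\{n^0_\gamma>0\}$ are equi‑bounded (they converge to the compact set $\{n^0>0\}$) and $p_\gamma\le p_M$, confines all the sets above to a fixed ball $B_{R(t)}$, so we compare compact sets. The functions $w_\gamma$ are uniformly bounded, $0\le w_\gamma\le p_M/G(0)$, uniformly Lipschitz in time as maps into $L^1(\mathbb{R}^N)$ (since $\partial_t w_\gamma=\text{e}^{-G(0)t}p_\gamma$ and $\|p_\gamma(s)\|_{L^1}$ is bounded), and they solve obstacle‑type elliptic problems with data bounded uniformly in $\gamma$; running the arguments of Section~\ref{sect:regularity.w.Omega} then gives a local modulus of continuity for $w_\gamma$ independent of $\gamma$, which together with the $L^1(Q_T)$‑convergence $w_\gamma\to w$ upgrades to $w_\gamma(t)\to w(t)$ \emph{locally uniformly, for every $t>0$}. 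The obstacle structure also supplies non‑degeneracy, uniformly in $\gamma$: $\sup_{B_r(z)}w_\gamma(t)\ge c\,r^2$ whenever $z\in\overline{\{w_\gamma(t)>0\}}$ is a point of the ``genuinely wetted'' region (i.e.\ not already in $\{n^0_\gamma>0\}$) and $r$ is small; and $\{w(t)=0\}$ has positive Lebesgue density at every point of $\partial\{w(t)>0\}$.

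\emph{Passage to the Hausdorff limit.} One then checks the two inclusions. If $z_\gamma\in\overline{\{w_\gamma(t)>0\}}$, $z_\gamma\to z$, and $z\notin\overline{\{w(t)>0\}}\cup\overline{\{n^0>0\}}$, then $w(t)\equiv 0$ on a ball around $z$, and for $\gamma$ large $z_\gamma$ and the nearby points of $\{w_\gamma(t)>0\}$ are bounded away from $\{n^0_\gamma>0\}$, hence lie in the genuinely wetted region; uniform non‑degeneracy of $w_\gamma$ there contradicts $w_\gamma\to 0$ locally uniformly, so no such $z_\gamma$ exists and $\{w_\gamma(t)>0\}\subset\big(\{w(t)>0\}\cup\{n^0>0\}\big)+B_\varepsilon$ for $\gamma$ large. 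Conversely, $\{n^0>0\}$ is covered by $\{n^0_\gamma>0\}\subset\{w_\gamma(t)>0\}$ by hypothesis, while any $z\in\partial\{w(t)>0\}$ is a limit of points where $w(t)>0$, hence where $w_\gamma(t)>0$ for $\gamma$ large, so $\{w(t)>0\}\subset\{w_\gamma(t)>0\}+B_\varepsilon$; a finite‑cover argument — legitimate by compactness and by the positive density of $\{w(t)=0\}$ along the free boundary, which lets one pick interior approximating points uniformly — makes both inclusions uniform in $z$. Together with the first paragraph, this proves the theorem.

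\emph{Main difficulty.} The heart of the matter is the uniform‑in‑$\gamma$ control invoked in the last two steps: that the porous medium fronts $\partial\{n_\gamma(t)>0\}$ propagate with a speed bounded uniformly in $\gamma$, and that $w_\gamma$ (equivalently $p_\gamma$) is non‑degenerate near its free boundary, uniformly in $\gamma$, in the part of the set genuinely swept by the front, so that no thin tail of $n_\gamma$ can survive in the limit. This is delicate precisely because of the competition emphasized in the introduction: the source, with $G(0)>0$, keeps pushing mass outward and may switch on new components of $\{p_\gamma(t)>0\}$ near singular times, and only the regularizing action of the free boundary — encoded in the obstacle‑problem reformulation of $w_\gamma$ and the uniform estimates it carries — prevents these effects from blurring the limiting set.
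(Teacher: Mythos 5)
Your reduction to the Baiocchi variables and your ``easy'' inclusion are workable, but there is a genuine gap at the heart of the argument: the uniform-in-$\gamma$ non-degeneracy $\sup_{B_r(z)}w_\gamma(t)\ge c\,r^2$ for points $z$ of the ``genuinely wetted'' region is asserted, not proved, and ``the obstacle structure'' cannot supply it. For finite $\gamma$ the function $w_\gamma$ does \emph{not} solve an obstacle problem: the complementarity relation rests on the saturation $n=1$ wherever $p>0$, which holds only in the stiff limit. With the natural definition of $w_\gamma$ one gets an identity analogous to \eqref{eq:Deltabw}, $\Delta w_\gamma(t)\simeq \mathrm{e}^{-G(0)t}n_\gamma(t)-n^0_\gamma+\int_0^t \mathrm{e}^{-G(0)s}n_\gamma(s)\big(G(0)-G(p_\gamma(s))\big)\,ds$, and near the $\gamma$-front the density $n_\gamma(s)$ is small for \emph{all} $s\le t$ (those points have only just been reached), so $\Delta w_\gamma(t)$ has no $\gamma$-independent positive lower bound on $\{w_\gamma(t)>0\}$ near $\partial\{w_\gamma(t)>0\}$; the standard comparison with $c|x-z|^2$ therefore yields no uniform constant. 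In fact the non-degeneracy you invoke is essentially equivalent to the hard inclusion $\{n_\gamma(t)>0\}\subset V_\delta(\{n(t)>0\})$ itself — it is exactly the statement that no thin, low-density tail of $n_\gamma$ can extend a definite distance beyond the limit support — so at this point the argument is circular. (A secondary inaccuracy: $\{w(t)=0\}$ need not have positive density at every free boundary point; cusp/singular points are explicitly allowed in Theorem~\ref{thm:main2}.)

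The paper closes this gap by a completely different, quantitative mechanism. It works directly with the pressure equation \eqref{eq:pmep} (in the form \eqref{eq:pressure.with.G(0)}), constructs the explicit radial supersolutions \eqref{supersolutions} of Lemma~\ref{lemma.on.supersolutions}, and, crucially, proves in Lemma~\ref{uniform.convergence.for.p} that $p_\gamma\to 0$ \emph{uniformly} on balls compactly contained in the interior of $\{n(t)=0\}$, using only the $L^1$ convergence of $n_\gamma(t)$, a subharmonicity/mean-value argument for $n_\gamma^{\gamma+1}+K_G h_\gamma$, and the Aronson--B\'enilan-type estimate \eqref{eq:patpg} to compare $n_\gamma(s)$ with $n_\gamma(t)$ for $s\in[1/\gamma,t]$. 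This uniform smallness is precisely what tames the term $\gamma p_\gamma G(0)$, which would otherwise let the front speed blow up with $\gamma$; the proof then iterates the barrier comparison over $\lceil 4G(0)t\rceil$ time steps on balls of shrinking radii to conclude $p_\gamma(t)=0$ near the chosen point. To repair your approach you would need a substitute for this quantitative step (a uniform bound on the $\gamma$-front speed or a proved uniform non-degeneracy of $p_\gamma$ or $w_\gamma$ behind the front), which is exactly the content your proposal takes for granted.
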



\section{The sets $\Omega(t)$ and $\widetilde \Omega(t)$, and the structure of the solution}
\label{sect:the.set.Omega(t)}
\setcounter{equation}{0}

This section is devoted to prove Theorem~\ref{thm:main1}. This will require some preliminary work, namely: (i) giving a suitable pointwise definition for $p$; (ii) obtaining some regularity for the time integral $w$ defined in~\eqref{eq:Baiocchi.variable}; and (iii) relating the positivity sets of $w(t)$ and $p(t)$.

\subsection{Pointwise definition of $p$}
We first recall some properties of stiff limit solutions $(n,p)$  to~\eqref{eq:main}--\eqref{eq:initial.data}, established in \cite{PQV},  that will be needed in this section, namely
\begin{eqnarray}
\label{eq:L1n}
&&\int_{\mathbb{R}^N} n(t)\leq \textrm{e}^{G(0)t} \int_{\mathbb{R}^N} n^0\quad\text{for all }t\ge0,
\\[8pt]
\label{eq:L1p}
&&\int_{\mathbb{R}^N} p(t)\leq p_M \textrm{e}^{G(0)t}  \int_{\mathbb{R}^N} n^0\quad\text{for a.e. } t>0,
\\[8pt]
\label{eq:delta}
&&\Delta p + G(p)  \geq 0\quad\text{in }\mathcal D'(Q),
\\[8pt]
\label{eq:patp}
&&\pa_t p\geq 0\quad\text{in }\mathcal D'(Q).
\end{eqnarray}
The first two inequalities are quite straightforward. Indeed, integrating \eqref{eq:pme} with respect to $x$ and using Gronwall's inequality and the monotonicity of $G$, we immediately get
\begin{equation*}\label{eq:L1ng}
\int_{\mathbb{R}^N} n_\gamma(t)\leq \textrm{e}^{G(0)t} \int_{\mathbb{R}^N} n^0_\gamma \quad\text{for all }\gamma >1,
\end{equation*}
which, combined with the $L^\infty$ bound $p_\gamma\leq p_M$, yields
\begin{equation}\label{eq:L1pg}
\int_{\mathbb{R}^N} p_\gamma(t) \leq \textrm{e}^{G(0)t} p_M^{\frac{\gamma-1}{\gamma}} \int_{\mathbb{R}^N} n^0_\gamma\leq C \textrm{e}^{G(0)t}\quad\text{for all }\gamma >1.
\end{equation}

The \lq semiconvexity' inequality~\eqref{eq:delta} and the monotonicity estimate~\eqref{eq:patp} follow from the next crucial lemma, which is a variant of some very classical computation for the porous media equation (without reaction term) due to Aronson and Bénilan \cite{AB}; see  \cite{PQV} for the proof.
\begin{lemma}\label{lem:delta}
Let $c=\min _{p\in[0,P_M]} (G(p)-pG'(p))>0$. For all $\gamma>1$,
\begin{eqnarray}
\label{eq:deltag}
&\displaystyle\Delta p_\gamma(t) + G(p_\gamma(t))  \geq - c \frac{\text{\rm e}^{-\gamma ct}}{1-\text{\rm e}^{-\gamma c t}}\quad\text{in }\mathcal{D}'(Q),
\\[8pt]
\label{eq:patpg}
&\displaystyle\pa_t p_\gamma(t)\geq -\gamma p_\gamma(t)c \frac{\text{\rm e}^{-\gamma ct}}{1-\text{\rm e}^{-\gamma c t}}\quad\text{in }\mathcal{D}'(Q).
\end{eqnarray}
\end{lemma}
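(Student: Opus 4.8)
This is the Aronson--B\'enilan estimate \cite{AB} adapted to the reaction term. Since $p_\gamma$ is not a classical solution where it vanishes, I would first carry out the computation on a regularised problem --- replace $n_\gamma^0$ by a smooth, strictly positive, bounded approximation with bounded derivatives, so that the corresponding $p_\gamma$ is a classical, strictly positive solution of \eqref{eq:pmep} --- and then recover \eqref{eq:deltag}--\eqref{eq:patpg} for the genuine $p_\gamma$ by sending the regularisation parameter to $0$, all the estimates being uniform in it.

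The key step is to compute the parabolic equation satisfied by $\Sigma:=\Delta p_\gamma+G(p_\gamma)$. With this notation \eqref{eq:pmep} reads $\pa_t p_\gamma=\gamma p_\gamma\Sigma+|\na p_\gamma|^2$; differentiating in $t$, writing $\pa_t\Sigma=\Delta(\pa_t p_\gamma)+G'(p_\gamma)\pa_t p_\gamma$, and using the Bochner identity $\Delta|\na p_\gamma|^2=2|D^2p_\gamma|^2+2\na p_\gamma\cdot\na\Delta p_\gamma$ together with $\na\Delta p_\gamma=\na\Sigma-G'(p_\gamma)\na p_\gamma$, a direct computation gives
\[
\pa_t\Sigma=\gamma p_\gamma\,\Delta\Sigma+(2\gamma+2)\,\na p_\gamma\cdot\na\Sigma+\gamma\Sigma^2-\gamma\big(G(p_\gamma)-p_\gamma G'(p_\gamma)\big)\Sigma+2|D^2p_\gamma|^2-G'(p_\gamma)|\na p_\gamma|^2 .
\]
Since $G'<0$, the last two terms are nonnegative and can be discarded; note that, unlike in the source-free case, one does \emph{not} need the lower bound $|D^2p_\gamma|^2\ge\frac1N(\Delta p_\gamma)^2$ here, because the reaction already supplies a favourable zeroth-order term through $G(p_\gamma)-p_\gamma G'(p_\gamma)\ge c>0$ (recall $p_\gamma\le p_M$).

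To close the estimate I would compare with the Riccati ODE $z'=\gamma z^2-\gamma cz$: its solution with $z(0^+)=-\infty$ is exactly $z(t)=-c\,\textrm{e}^{-\gamma ct}/(1-\textrm{e}^{-\gamma ct})$, while for finite $M>0$ the solutions $\zeta_M$ with $\zeta_M(0)=-M$ are globally defined, stay negative, and increase to $z$ as $M\to\infty$. For $M>-\inf\Sigma(\cdot,0)$ set $v:=\Sigma-\zeta_M$, so that $v(\cdot,0)\ge0$. On the set $\{v<0\}$ one has $\Sigma<\zeta_M<0$, hence $\big(G(p_\gamma)-p_\gamma G'(p_\gamma)\big)\Sigma\le c\Sigma$, so that $\pa_t\Sigma\ge\gamma p_\gamma\Delta\Sigma+(2\gamma+2)\na p_\gamma\cdot\na\Sigma+\gamma\Sigma^2-\gamma c\Sigma$ there, and subtracting the ODE for $\zeta_M$ yields
\[
\pa_t v-\gamma p_\gamma\,\Delta v-(2\gamma+2)\,\na p_\gamma\cdot\na v-\gamma(\Sigma+\zeta_M-c)\,v\ \ge\ 0,\qquad \gamma(\Sigma+\zeta_M-c)<-\gamma c<0 .
\]
At an interior negative minimum of $v$ the first three terms are $\le0$ and the fourth is $<0$, a contradiction; combined with $v(\cdot,0)\ge0$ and an exponential weight to handle $|x|\to\infty$, the parabolic maximum principle gives $v\ge0$, i.e.\ $\Sigma\ge\zeta_M$, and letting $M\to\infty$ proves \eqref{eq:deltag}. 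Then \eqref{eq:patpg} is immediate, since by \eqref{eq:pmep} and $p_\gamma\ge0$ one has $\pa_t p_\gamma=\gamma p_\gamma\Sigma+|\na p_\gamma|^2\ge\gamma p_\gamma\Sigma\ge\gamma p_\gamma z(t)$. The delicate point in this argument is that the equation for $\Sigma$ is only available for smooth, strictly positive solutions, so the comparison must be performed on the regularised problem and then passed to the limit, and the natural comparison function $z$ is singular at $t=0$ on an unbounded domain --- whence the detour through the bounded subsolutions $\zeta_M$ and the need for some control at spatial infinity, all of which is carried out in \cite{PQV}.
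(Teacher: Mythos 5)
Your argument is correct and is essentially the proof the paper relies on: the paper itself does not prove Lemma~\ref{lem:delta} but defers to \cite{PQV}, and what you reconstruct (the equation for $\Sigma=\Delta p_\gamma+G(p_\gamma)$, discarding the nonnegative terms $2|D^2p_\gamma|^2-G'(p_\gamma)|\na p_\gamma|^2$, and comparison with the Riccati solution $z(t)=-c\,\mathrm{e}^{-\gamma ct}/(1-\mathrm{e}^{-\gamma ct})$ after regularisation, with \eqref{eq:patpg} following directly from \eqref{eq:pmep}) is exactly that Aronson--B\'enilan-type argument. Only a cosmetic slip: the bounded comparison functions $\zeta_M$ \emph{decrease} to $z$ as $M\to\infty$ (larger $M$ means more negative data), which does not affect the conclusion $\Sigma\ge z$.
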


We note that $n(t)$ is well defined, and belongs to $L^1(\RR^N)$, for \emph{all} $t$ , while the function  $p(t)$ is known to belong to $L^1(\RR^N)$ only for almost every $t$.
However, in what follows, it will be convenient to be able to talk about the pointwise value of the pressure.
We thus start with the following lemma.
\begin{lemma}[Pointwise definition of the pressure]\label{lem:pointwise}
Let $(n,p)$ be a stiff limit solution to~\eqref{eq:main}--\eqref{eq:initial.data}. The pressure $p$ can be redefined in a set of measure $0$ so that
\begin{equation}
\label{eq:pointwise.definition}
p(x_0,t_0)=\lim_{r\to 0}  \frac{1}{|B_r|}\int_{B_r(x_0)}\frac{1}{r^2} \int_{t_0}^{t_0+r^2} p(x,t)\,  dt\, dx\quad\text{for all }(x_0,t_0)\in \overline{Q}.
\end{equation}
With this pointwise definition:
\item[(i)] $p$ is upper-semicontinuous, $\limsup_{(x,t)\to(x_0,t_0)} p(x,t) \leq  p(x_0,t_0)$;
\item[(ii)] for all $x$, the function $t\mapsto p(x,t)$ is monotone non-decreasing and continuous to the right,
\begin{equation} \label{eq:ppoint}
\lim_{t\to t_0^+} p(x,t) = p(x,t_0)\qquad\mbox{for all $x\in \mathbb{R}^N$.}
\end{equation}
\end{lemma}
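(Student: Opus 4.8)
The plan is to \emph{define} the pointwise value of $p$ by the right-hand side of~\eqref{eq:pointwise.definition} and to deduce all the stated properties from the two one-sided distributional inequalities at our disposal: $\pa_t p\ge 0$ from~\eqref{eq:patp}, and $\Delta p\ge -G(0)$, which follows from~\eqref{eq:delta} because $0\le p$ and $G$ is non-increasing (so $G(p)\le G(0)$, with $G(0)>0$ by~\eqref{eq:G}). Fix $(x_0,t_0)\in\overline Q$, set $u(x,t):=p(x,t)+\frac{G(0)}{2N}|x-x_0|^2$, so that $\Delta_x u\ge 0$ and $\pa_t u\ge 0$ in $\mathcal{D}'(Q)$, and put
$$
\Phi_r:=\frac1{|B_r|}\int_{B_r(x_0)}\frac1{r^2}\int_{t_0}^{t_0+r^2}p(x,t)\,dt\,dx,
\qquad
\Psi(r):=\Phi_r+\frac{G(0)}{2(N+2)}\,r^2 .
$$
Since $\frac1{|B_r|}\int_{B_r(x_0)}|x-x_0|^2\,dx=\frac{N}{N+2}r^2$, the change of variables $x=x_0+ry$, $t=t_0+r^2\tau$ gives the convenient form $\Psi(r)=\frac1{|B_1|}\int_{B_1\times(0,1)}u(x_0+ry,t_0+r^2\tau)\,dy\,d\tau\ge 0$.

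The analytic heart of the lemma is that $\Psi$ is non-decreasing. Mollifying $p$ in space and time, $p_\eps=p*\rho_\eps$, preserves both inequalities ($\Delta p_\eps\ge -G(0)$, $\pa_t p_\eps\ge 0$), and differentiating the resulting smooth integrand in $r$ yields
$$
\Psi_\eps'(r)=\frac1{|B_1|}\int_{B_1\times(0,1)}\Big(\nabla_x u_\eps\cdot y+2r\tau\,\pa_t u_\eps\Big)(x_0+ry,t_0+r^2\tau)\,dy\,d\tau .
$$
Here $\pa_t u_\eps=\pa_t p_\eps\ge 0$, so that term is non-negative; for the other, fixing $r,\tau$, the function $y\mapsto v(y):=u_\eps(x_0+ry,t_0+r^2\tau)$ satisfies $\Delta_y v=r^2(\Delta_x u_\eps)\ge 0$, hence is subharmonic on $B_1$, and an integration by parts gives $\int_{B_1}\nabla_x u_\eps\cdot y\,dy=\frac1r\int_{B_1}\nabla_y v\cdot y\,dy=\frac1r\big(\int_{\pa B_1}v\,d\sigma-N\int_{B_1}v\,dy\big)\ge 0$ by the sub-mean-value property. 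Thus $\Psi_\eps'\ge 0$; letting $\eps\to 0$, $\Psi$ is non-decreasing, so $\lim_{r\to0^+}\Psi(r)$ exists, and since $\frac{G(0)}{2(N+2)}r^2\to 0$ so does $\lim_{r\to0^+}\Phi_r$. I take this limit as the (re)definition of $p(x_0,t_0)$, for every $(x_0,t_0)\in\overline Q$.

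The remaining items are then soft. Since $\Psi$ is non-decreasing, $p(x_0,t_0)=\inf_{r>0}\Psi(r)$, an infimum of functions of $(x_0,t_0)$ that are continuous (translation-continuity of $L^1$ functions: $\Phi_r$ depends continuously on $(x_0,t_0)$, and the $r^2$-correction is a constant), so $p$ is upper semicontinuous, which is (i). For (ii): if $t_0\le t_1$, then $\pa_t p\ge 0$ gives $\int_{t_0}^{t_0+r^2}p(x,t)\,dt=\int_0^{r^2}p(x,t_0+s)\,ds\le\int_0^{r^2}p(x,t_1+s)\,ds$ for a.e.\ $x$, hence $\Phi_r(x_0,t_0)\le\Phi_r(x_0,t_1)$ for every $r$, and $r\to0$ yields $p(x_0,t_0)\le p(x_0,t_1)$; the right limit then exists and is $\ge p(x_0,t_0)$ by monotonicity, while $\limsup_{t\to t_0^+}p(x_0,t)\le p(x_0,t_0)$ by (i), giving~\eqref{eq:ppoint}. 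Finally, to check that $p$ has only been changed on a null set: for a.e.\ $t$ the slice satisfies $\Delta p(\cdot,t)\ge -G(0)$ in $\mathcal{D}'(\mathbb{R}^N)$ (extract this from the space--time inequality by mollifying in $t$ against a countable dense set of nonnegative test functions), so $p(\cdot,t)$ has a subharmonic representative $p^\ast(\cdot,t)$, equal to $p(\cdot,t)$ a.e., with $p^\ast(x_0,t)-\frac{G(0)}{2(N+2)}\rho^2\le\frac1{|B_\rho|}\int_{B_\rho(x_0)}p(x,t)\,dx\le\frac1{|B_R|}\int_{B_R(x_0)}p(x,t)\,dx+\frac{G(0)}{2(N+2)}R^2$ for $0<\rho\le R$. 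Plugging this into $\Phi_r$, using Fubini, letting $r\to0$ at any $x_0$ and any $t_0$ that is a right Lebesgue point of the relevant bounded functions of $t$ (i.e.\ for a.e.\ $(x_0,t_0)$), and then $R\to0$, sandwiches $\lim_{r\to0}\Phi_r(x_0,t_0)$ between $p^\ast(x_0,t_0)$ and $p^\ast(x_0,t_0)$; hence the new $p$ agrees a.e.\ with the old one.

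The main obstacle is the monotonicity step: one must justify differentiation under the integral when $p$ is merely $BV$ in space--time, so that $\nabla p$ and $\pa_t p$ are only Radon measures — hence the mollification, after which the identity is exact and the limit $\eps\to0$ is immediate since $0\le p_\eps\le p_M$ — and one must accommodate the endpoint $t_0=0$, where the quantitative Aronson--B\'enilan bounds of Lemma~\ref{lem:delta} blow up; but the limiting inequalities~\eqref{eq:delta}--\eqref{eq:patp} still hold on every open cylinder $B_r(x_0)\times(0,r^2)\subset Q$, so mollifying on $\{t>\delta\}$ and sending $\delta\to0$ (using $p\le p_M$ and $\nabla p\in L^2$) recovers the argument there as well. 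Everything after the monotonicity is routine.
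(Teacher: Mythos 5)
Your proof is correct and takes essentially the same route as the paper: the quadratic correction $\frac{G(0)}{2N}|x-x_0|^2$ together with $\Delta p\ge -G(0)$ and $\pa_t p\ge 0$ makes the corrected parabolic averages monotone in $r$ (the paper phrases this as two-variable monotonicity of $\phi(r,s)$ via the mean value property, while you differentiate the mollified average in $r$ directly), so the limit exists at every point and defines the representative. The a.e.\ identification (where the paper simply cites Lebesgue differentiation and you give a more hands-on sandwich argument with the subharmonic slices), the upper semicontinuity as an infimum of continuous functions, and the monotonicity and right-continuity in $t$ are handled exactly as in the paper.
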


Inequalities \eqref{eq:delta}--\eqref{eq:patp} imply that the pressure
satisfies
$$
\pa_t p+ \Delta p \geq -G(0)\quad\text{in }\mathcal{D}'(Q),
$$
and so the function
$$
(x,t)\mapsto p(x,-t)+\frac{G(0)}{2N} |x|^2
$$
is a sub-solution of the heat equation.
We could thus use some version of the mean-value formula for sub-solutions of the heat equation to prove Lemma~\ref{lem:pointwise}. However, this mean-value formula involves heat balls. To circumvent them,  we will use inequalities \eqref{eq:delta} and \eqref{eq:patp} separately.

\begin{proof}[Proof of Lemma \ref{lem:pointwise}]
Inequality \eqref{eq:delta} gives
$$
\Delta p \geq -G(0) \quad \mbox{in } \mathcal D'(Q)
$$
and so, for all $t_0\geq 0$ and $s>0$, we have
$$
\Delta\left( \frac{1}{s^2}\int_{t_0}^{t_0+s^2} p(t)\,dt\right) \geq -G(0)\quad\text{in }\mathcal D'(\mathbb{R}^N).
$$

The classical Mean Value Property for the Laplace equation thus implies that, for all $x_0\in \mathbb{R}^N$, the function
$$
\phi(r,s)= \frac{1}{|B_r|}\int_{B_r(x_0)}\frac{1}{s^2} \int_{t_0}^{t_0+s^2}\left( p(x,t)  + \frac{G(0)}{2N}(x-x_0)^2\right)\, dt\, dx
$$
is non-decreasing with respect to $r$.
Furthermore, inequality \eqref{eq:patp} implies that the function $s\mapsto \phi(r,s)$ is non-decreasing with respect to $s$ (for $s>0$) for all $r>0$.
We conclude that the function
$$
r\mapsto \phi(r,r) = \frac{1}{|B_r|}\int_{B_r(x_0)}\frac{1}{r^2} \int_{t_0}^{t_0+r^2}\left( p(x,t)  + \frac{G(0)}{2N}(x-x_0)^2\right)\, dt\, dx
$$
is non-decreasing, and so
$ \lim_{r\to 0} \phi(r,r)$
exists for all $(x_0,t_0)$.
By Lebesgue's Differentiation Theorem, this limit is equal to $p(x_0,t_0)$ for almost every $(x_0, t_0)$. Therefore, after redefining $p$ on a set of measure zero, we have~\eqref{eq:pointwise.definition}.

The other properties of $p$ are now easily derived.
Indeed, we also have
$$
p(x_0,t_0)  = \inf_{r>0}  \frac{1}{r^2 |B_r|} \int_{B_r(x_0)} \int_{t_0}^{t_0+r^2}\left( p(x,t) + \frac{G(0)}{2N}(x-x_0)^2\right)\, dt\, dx
$$
and so $p$ is upper-semicontinuous (as the infimum of a family of continuous functions).

Finally,
it is readily seen that
$$
p(x,t)\leq p(x,s) \quad \text{for all }x\in \mathbb{R}^N,\text{ and }0<t<s.
$$
Therefore, the limit $\lim_{t\to t_0^+} p(x,t)$ exists, and satisfies $p(x,t_0)\le\lim_{t\to t_0^+} p(x,t)$, which combined with the upper semicontinuity yields~\eqref{eq:ppoint}.
\end{proof}

The one-sided continuity of $p$ with respect to $t$ gives us more information on the complementarity relation~\eqref{eq:complementarity.relation}.
\begin{corollary}\label{cor:pdelta}
For all $t>0$, the function $p(t)$ satisfies
\begin{eqnarray}\label{eq:pdelta1}
&\Delta p(t)+G(p(t)) \geq 0 \quad\text{in } \mathcal D'(\RR^N),
\\[8pt]
\label{eq:pdelta2}
&\Delta p(t)+G(p(t)) = 0 \quad \text{in } \mathcal D'(\mathrm{Int}(\{p(t)>0 \})).
\end{eqnarray}
\end{corollary}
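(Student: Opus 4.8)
The plan is to descend from the space-time statements already available to the individual time slices, using the one-sided regularity of $p$ in $t$ from Lemma~\ref{lem:pointwise} as the only non-formal ingredient. For $\phi\in C_c^\infty(\mathbb{R}^N)$ set
$$
F_\phi(s)=\int_{\mathbb{R}^N}\big(p(x,s)\Delta\phi(x)+G(p(x,s))\phi(x)\big)\,dx,
$$
which is well defined for every $s>0$ since $0\le p\le p_M$ and $\phi$ is compactly supported. The key elementary observation is that $F_\phi$ is right-continuous in $s$: by \eqref{eq:ppoint} we have $p(x,s)\to p(x,t)$ as $s\downarrow t$ for every $x$, the integrand is dominated by the fixed $L^1$ function $p_M|\Delta\phi|+(\max_{[0,p_M]}|G|)\,|\phi|$, and $G$ is continuous, so dominated convergence gives $F_\phi(s)\to F_\phi(t)$ as $s\downarrow t$, for every $t\ge0$ and every $\phi$ (of either sign).

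To prove \eqref{eq:pdelta1}, I would test the space-time inequality \eqref{eq:delta} against product functions $\phi(x)\psi(s)$ with $0\le\phi\in C_c^\infty(\mathbb{R}^N)$ and $0\le\psi\in C_c^\infty((0,\infty))$; this yields $\int_0^\infty\psi(s)F_\phi(s)\,ds\ge0$ for all such $\psi$, hence $F_\phi\ge0$ a.e.\ on $(0,\infty)$. Given $t>0$, picking $s_k\downarrow t$ with $F_\phi(s_k)\ge0$ and letting $k\to\infty$ using the right-continuity of $F_\phi$ gives $F_\phi(t)\ge0$, which is exactly \eqref{eq:pdelta1}.

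For \eqref{eq:pdelta2}, fix $x_0\in\mathrm{Int}(\{p(t)>0\})$ and $\rho>0$ with $B_\rho(x_0)\subset\{p(t)>0\}$. Monotonicity of $s\mapsto p(x,s)$ (Lemma~\ref{lem:pointwise}(ii)) gives $p(x,s)\ge p(x,t)>0$ for all $x\in B_\rho(x_0)$ and all $s>t$, so $p>0$, and hence $n=1$, almost everywhere on the open set $U:=B_\rho(x_0)\times(t,\infty)$ (here the Hele-Shaw graph \eqref{eq:Hele-Shaw.graph} is used). Since $n\equiv1$ a.e.\ on $U$, equation \eqref{eq:main} reduces on $U$ to $\Delta p+G(p)=0$ in $\mathcal D'(U)$; alternatively one may invoke the complementarity relation \eqref{eq:complementarity.relation} of \cite{PQV} localized to $\{p>0\}$. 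Testing this against $\phi(x)\psi(s)$ with $\phi\in C_c^\infty(B_\rho(x_0))$ of arbitrary sign and $\psi\in C_c^\infty((t,\infty))$ shows $F_\phi=0$ a.e.\ on $(t,\infty)$, and right-continuity then forces $F_\phi(t)=0$. Thus $\Delta p(t)+G(p(t))=0$ in $\mathcal D'(B_\rho(x_0))$; covering $\mathrm{Int}(\{p(t)>0\})$ by such balls and using a partition of unity gives \eqref{eq:pdelta2}.

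The only point requiring care is this passage from a space-time (in)equality to the fixed-time slice: such a restriction is false for a general function, and what legitimizes it here is precisely that $p(\cdot,t)$ is the right-continuous, $[0,p_M]$-valued representative constructed in Lemma~\ref{lem:pointwise}. I would therefore make sure throughout that the set $\{p(t)>0\}$ and its interior are understood with respect to this representative, so that both the monotonicity in $t$ and the right-continuity of $F_\phi$ are genuinely at our disposal; once that is fixed, the remaining steps are routine.
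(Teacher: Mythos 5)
Your proof is correct and follows essentially the same route as the paper's: defining the scalar function $t\mapsto\int(p(t)\Delta\phi+G(p(t))\phi)$, deriving the a.e.\ (in)equality from the space-time statements by testing with product test functions, and then upgrading to every $t$ via the right-continuity coming from Lemma~\ref{lem:pointwise}. The only cosmetic difference is that you localize to balls inside $\mathrm{Int}(\{p(t)>0\})$ and patch with a partition of unity, whereas the paper works directly with $\mathrm{Int}(\{p(t_0)>0\})\times(t_0,\infty)$; both are equivalent, and your explicit remark about which representative of $p$ is being used is exactly the point the paper relies on implicitly.
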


\noindent\emph{Remark. } The set $\{ p(t)>0 \}$ may not be open, though we will show later on that it is open for almost every $t>0$.
\begin{proof}
Let $\vphi\in\mathcal D(\RR^N)$, $\varphi\ge0$. We define the function $H\in L^\infty(\mathbb{R}_+)$
by
$$
H(t)=\int_{\RR^N} \big(p(t) \Delta \vphi + G(p(t))\vphi\big).
$$
Then \eqref{eq:delta} implies that $H\ge0$
in $\mathcal D'(\mathbb{R}_+)$, and hence
for almost every $t>0$. Finally, the one-sided continuity \eqref{eq:ppoint} implies that $H(t)\ge0$ for all $t>0$,
which gives \eqref{eq:pdelta1}.

Next, we fix $t_0>0$ and we note that the monotonicity of $p$ with respect to $t$ implies that $p(t)>0$ in $\{p(t_0)>0 \}$ for all $t>t_0$.
In particular, we deduce that $n(x,t)=1$ for almost every $(x,t)\in \Sigma_{t_0}:=\{p(t_0)>0 \}\times(t_0,\infty)$, and so equation \eqref{eq:main} yields
$$
\Delta p+G(p) = 0  \quad \text{in }    \mathcal D'(\mathrm{Int}(\Sigma_{t_0})).
$$

We can then proceed as in the first part of the proof to show that for all $t\geq t_0$ (note that we can take $t=t_0$ because of the right hand side continuity) we have
$$
\Delta p(t_0)+G(p(t_0)) = 0  \quad \text{in }  \mathcal D'(\mathrm{Int}(\{p(t_0)>0 \})),
$$
which gives \eqref{eq:pdelta2}.
\end{proof}



\subsection{The function $ w $}
As mentioned in the Introduction,
it is a classical fact that the time integral of the solution of the Hele-Shaw free boundary problem has better regularity (in time and in space) than the solution itself.
Following that idea, but taking into account the presence of a source term in our equation,
 we introduce the function
\begin{equation} \label{eq:bw}
 w (x,t)=\int_0^t \textrm{e}^{-G(0)s} p(x,s)\, ds.
\end{equation}
Note that $ w $ is clearly Lipschitz in time with values in $L^1(\mathbb{R}^N)\cap L^\infty(\mathbb{R}^N)$.
Our first step is to prove that this function is in fact continuous with respect to $x$.
This will follow from the next simple lemma.
\begin{lemma}
For all $t>0$, the function $w(t)$ defined by \eqref{eq:bw} satisfies
\begin{equation}\label{eq:Deltabw}
\Delta w(t) = \underbrace{\text{\rm e}^{-G(0)t} n(t)-n^0 + \int_0^t \text{\rm e}^{-G(0)s} n(s)\big(G(0)-G(p(s))\big)\, ds}_{ =: \tilde F(t)} \quad\text{a.e.~in }\mathbb{R}^N.
\end{equation}
\end{lemma}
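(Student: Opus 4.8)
The plan is to compute $\Delta w(t)$ by integrating the equation $\partial_t n = \Delta p + nG(p)$ in time against the weight $\mathrm{e}^{-G(0)s}$, and then to rearrange the resulting identity so that the Laplacian acts on $w$ alone. First I would start from \eqref{eq:main}, multiply by $\mathrm{e}^{-G(0)s}$, and integrate over $s\in(0,t)$. Since $w(t)=\int_0^t \mathrm{e}^{-G(0)s}p(s)\,ds$ and the Laplacian commutes with the time integral (in the sense of distributions), the term $\int_0^t \mathrm{e}^{-G(0)s}\Delta p(s)\,ds$ is exactly $\Delta w(t)$. So I obtain, in $\mathcal{D}'(\mathbb{R}^N)$,
$$
\Delta w(t) = \int_0^t \mathrm{e}^{-G(0)s}\,\partial_s n(s)\,ds \;-\; \int_0^t \mathrm{e}^{-G(0)s}\, n(s)G(p(s))\,ds.
$$

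Next I would handle the first term on the right by integration by parts in $s$. Using that $n\in C([0,\infty);L^1(\mathbb{R}^N))$ and $\partial_s(\mathrm{e}^{-G(0)s}) = -G(0)\mathrm{e}^{-G(0)s}$, we get
$$
\int_0^t \mathrm{e}^{-G(0)s}\,\partial_s n(s)\,ds = \mathrm{e}^{-G(0)t} n(t) - n^0 + G(0)\int_0^t \mathrm{e}^{-G(0)s} n(s)\,ds.
$$
Substituting this back and combining the two integral terms,
$$
\Delta w(t) = \mathrm{e}^{-G(0)t} n(t) - n^0 + \int_0^t \mathrm{e}^{-G(0)s} n(s)\big(G(0) - G(p(s))\big)\,ds,
$$
which is precisely \eqref{eq:Deltabw}. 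Finally one notes the right-hand side is an $L^1(\mathbb{R}^N)$ function of $x$ for each fixed $t$ (the boundary terms are in $L^1$ by \eqref{eq:L1n} and \eqref{eq:initial.data}, and the integrand is controlled using $0\le n\le 1$, $0\le p\le p_M$ together with $G\in C^1$), so the identity $\Delta w(t)=\tilde F(t)$ holds a.e.\ in $\mathbb{R}^N$.

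The only point requiring a little care is the justification of the integration by parts in $s$ at the level of distributions in $x$: one tests \eqref{eq:main} against $\varphi(x)\eta(s)$ with $\varphi\in\mathcal{D}(\mathbb{R}^N)$ and $\eta$ a smooth approximation of $\mathrm{e}^{-G(0)s}\chi_{(0,t)}(s)$, uses the regularity $n\in C([0,\infty);L^1)$ to pass to the limit in the boundary contributions at $s=0$ and $s=t$, and invokes $p\in L^1(Q_T)$ together with the $L^\infty$ bound on $p$ to control the remaining terms by dominated convergence. I do not expect a genuine obstacle here — everything follows from the a priori bounds already recorded in the excerpt — but this approximation argument is the step where the proof's work actually lies, the rest being bookkeeping. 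For an a.e.-in-$x$ statement (rather than merely distributional) one uses that $\tilde F(t)\in L^1(\mathbb{R}^N)$, so that $w(t)$ solves a Poisson equation with $L^1$ right-hand side, which also sets up the continuity of $w(t)$ in $x$ claimed immediately after the lemma.
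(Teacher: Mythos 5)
Your proposal is correct and is essentially the paper's argument: the paper simply folds the weight into the time derivative first, rewriting \eqref{eq:main} as $\pa_t\big(\text{\rm e}^{-G(0)t}n(t)\big)=\Delta\big(\text{\rm e}^{-G(0)t}p(t)\big)-n\,\text{\rm e}^{-G(0)t}\big(G(0)-G(p(t))\big)$ and integrating in time, which is the same computation as your multiplication by $\text{\rm e}^{-G(0)s}$ followed by integration by parts in $s$. The concluding step — that the right-hand side lies in $L^1(\mathbb{R}^N)$ thanks to $n\in C([0,\infty);L^1(\mathbb{R}^N))$, so the distributional identity holds a.e. — is also exactly as in the paper.
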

\begin{proof}
Equation \eqref{eq:Deltabw} in~$\mathcal{D}'(\mathbb{R}^N)$ is obtained by first rewriting \eqref{eq:main} as
$$
\pa_t (\textrm{e}^{-G(0)t} n(t)) = \Delta(\textrm{e}^{-G(0)t} p(t)) - n \textrm{e}^{-G(0)t} \big(G(0)-G(p(t))\big),
$$
and then integrating with respect to $t$.
 On the other hand, $n\in C([0,\infty);L^1(\mathbb{R}^N))$, so the right hand side in \eqref{eq:Deltabw}  belongs to $L^1(\mathbb{R}^N)$, and hence the equation is satisfied almost everywhere.
\end{proof}

\begin{corollary}\label{cor:preg}
For all $t>0$, $w(t)\in W^{2,p}(\mathbb{R}^N)$ for all $p\in(1,\infty)$.
In particular,  $w(t)\in C^{1,\alpha}(\mathbb{R}^N)$ for all $\alpha\in(0,1)$.
\end{corollary}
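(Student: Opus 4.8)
The plan is to read off the elliptic equation $\Delta w(t)=\tilde F(t)$ provided by \eqref{eq:Deltabw}, observe that $\tilde F(t)$ is bounded and integrable, and then invoke global Calderón–Zygmund estimates followed by a Sobolev embedding. No new ideas are needed beyond organizing these standard facts on $\mathbb{R}^N$.

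First I would record that $\tilde F(t)\in L^1(\mathbb{R}^N)\cap L^\infty(\mathbb{R}^N)$ for every fixed $t>0$. The $L^\infty$ bound is immediate from $0\le n\le 1$, $0\le p\le p_M$ and $G\in C^1([0,\infty))$: each of the three terms defining $\tilde F(t)$ in \eqref{eq:Deltabw} is bounded by a constant depending only on $t$, $p_M$ and $G$. For the $L^1$ bound we use $n^0\in L^1(\mathbb{R}^N)$ and $n\in C([0,\infty);L^1(\mathbb{R}^N))$ for the first two terms, and Minkowski's integral inequality together with $|G(0)-G(p(s))|\le \|G'\|_{L^\infty([0,p_M])}\,p_M$ for the time integral. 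Interpolating, $\tilde F(t)\in L^q(\mathbb{R}^N)$ for all $q\in[1,\infty]$. Likewise, as already noted after \eqref{eq:bw}, $w(t)\in L^1(\mathbb{R}^N)\cap L^\infty(\mathbb{R}^N)\subset L^q(\mathbb{R}^N)$ for all such $q$.

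Next, fix $q\in(1,\infty)$. Since $w(t)\in L^q(\mathbb{R}^N)$ and $\Delta w(t)=\tilde F(t)\in L^q(\mathbb{R}^N)$ in $\mathcal D'(\mathbb{R}^N)$, global elliptic regularity on $\mathbb{R}^N$ gives $w(t)\in W^{2,q}(\mathbb{R}^N)$ together with the estimate $\|w(t)\|_{W^{2,q}(\mathbb{R}^N)}\le C_q\big(\|w(t)\|_{L^q(\mathbb{R}^N)}+\|\tilde F(t)\|_{L^q(\mathbb{R}^N)}\big)$. A clean way to justify this without any cutoff is to write $w(t)=(I-\Delta)^{-1}\big(w(t)-\tilde F(t)\big)$ and use that the Bessel potential $(I-\Delta)^{-1}$ maps $L^q(\mathbb{R}^N)$ boundedly into $W^{2,q}(\mathbb{R}^N)$ for $1<q<\infty$ (equivalently, the second-order Riesz transforms are $L^q$-bounded). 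This proves the first assertion of the corollary.

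Finally, for $q>N$ the Morrey–Sobolev embedding yields $W^{2,q}(\mathbb{R}^N)\hookrightarrow C^{1,1-N/q}(\mathbb{R}^N)$, so $w(t)\in C^{1,\alpha}(\mathbb{R}^N)$ with $\alpha=1-N/q$; letting $q\to\infty$ covers every $\alpha\in(0,1)$. There is essentially no obstacle here: the only two points to handle with a little care are that one needs \emph{global} (not merely interior) $W^{2,q}$ regularity on $\mathbb{R}^N$ — which is why I keep $w(t)\in L^q$ in play and use the Bessel potential representation — and that $\tilde F(t)$ genuinely belongs to $L^q$ for every finite $q$, which is exactly the content of the $L^1\cap L^\infty$ bound established in the first step.
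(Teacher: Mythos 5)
Your proposal is correct and follows essentially the same route as the paper: bound $\tilde F(t)$ in $L^\infty$ (via $0\le n\le 1$, $0\le p\le p_M$, monotonicity of $G$) and in $L^1$ (via $n^0\in L^1$ and the $L^1$ growth bound on $n$), interpolate to get $\tilde F(t)\in L^q$ for all finite $q$, apply global Calder\'on--Zygmund regularity to $\Delta w(t)=\tilde F(t)$, and conclude with the Sobolev--Morrey embedding. The only difference is cosmetic: you justify the global $W^{2,q}$ estimate via the Bessel potential representation $w(t)=(I-\Delta)^{-1}\bigl(w(t)-\tilde F(t)\bigr)$, a slightly more explicit version of the Calder\'on--Zygmund step the paper invokes directly.
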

\begin{proof}
Let $\tilde F$ be as in~\eqref{eq:Deltabw}. Using that $n\leq 1$, $n^0\geq 0$ and $G(p)\geq 0$, since $p\le p_M$, we get
$$
\tilde F(t) \leq \textrm{e}^{-G(0)t}  + \int_0^t \textrm{e}^{-G(0)s} G(0)\, ds = 1.
$$
Furthermore, since $n\geq 0$ and $G(0)\ge G(p)$, we also have
$$
\tilde F\geq -n^0 \geq -1.
$$
We deduce that $\|\tilde F(t)\|_{L^\infty(\mathbb{R}^N)}\leq 1$.
Note also that (using \eqref{eq:L1n})
$$
\| \tilde F(t)\|_{L^1(\mathbb{R}^N)} \leq (2+t G(0)) \|n^0\|_{L^1(\mathbb{R}^N)}.
$$
We thus have
$$
\|\tilde F(t)\|_{L^p(\mathbb{R}^N)}\leq C(t) \|n^0\|_{L^1(\mathbb{R}^N)} ^{1/p}\quad\text{for all }p\in (1,\infty),
$$
and  Calder\'on-Zygmund estimates give
$$
\| w(t)\|_{W^{2,p}(\mathbb{R}^N)} \leq C(t) \|n^0\|_{L^1(\mathbb{R}^N)} ^{1/p} \quad\text{for all } p\in(1,\infty).
$$
Finally, Sobolev's embeddings  imply that  $w(t)$ is a $C^{1,\alpha}$ function for all $\alpha\in(0,1)$.
\end{proof}

We can show also that $w$ is Lipschitz continuous with respect to $(x,t)$.
\begin{corollary}\label{cor:bwcont}
For all $T>0$, the function $ w $ belongs to $W^{1,\infty}_{\rm loc}(Q_T)$.
\end{corollary}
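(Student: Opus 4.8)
The plan is to control separately the spatial gradient $\nabla_x w$ and the time derivative $\partial_t w$ over $Q_T$, and then to conclude by the elementary fact that a function whose distributional gradient in the variables $(x,t)$ lies in $L^\infty(Q_T)$ admits a Lipschitz representative, since $Q_T=\mathbb{R}^N\times(0,T)$ is convex.

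For the spatial part I would revisit the proof of Corollary~\ref{cor:preg}. The Calder\'on--Zygmund estimate used there gives
\[
\|w(t)\|_{W^{2,p}(\mathbb{R}^N)}\le C(t)\,\|n^0\|_{L^1(\mathbb{R}^N)}^{1/p}\qquad\text{for all }p\in(1,\infty),
\]
where the constant $C(t)$ enters only through the bounds $\|\tilde F(t)\|_{L^\infty(\mathbb{R}^N)}\le 1$ and $\|\tilde F(t)\|_{L^1(\mathbb{R}^N)}\le (2+tG(0))\|n^0\|_{L^1(\mathbb{R}^N)}$ established there. Since these bounds are nondecreasing in $t$, the constant may be taken of the form $C(T)$ uniformly for $t\in(0,T)$. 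Choosing any $p>N$ and applying Morrey's embedding $W^{1,p}(\mathbb{R}^N)\hookrightarrow L^\infty(\mathbb{R}^N)$ componentwise to $\nabla w(t)$, we obtain $\|\nabla_x w(t)\|_{L^\infty(\mathbb{R}^N)}\le C(T)$ for every $t\in(0,T)$, hence $\nabla_x w\in L^\infty(Q_T)$.

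For the time derivative I would use the definition~\eqref{eq:bw} directly: since the integrand $\mathrm{e}^{-G(0)s}p(x,s)$ is bounded and, for fixed $x$, the map $t\mapsto w(x,t)$ is absolutely continuous, an integration by parts in $t$ against a test function $\phi\in\mathcal D(Q_T)$ shows that $\partial_t w(x,t)=\mathrm{e}^{-G(0)t}p(x,t)$ in $\mathcal D'(Q_T)$; the bound $0\le p\le p_M$ then yields $\|\partial_t w\|_{L^\infty(Q_T)}\le p_M$. Combining the two estimates, the full distributional gradient of $w$ in $(x,t)$ belongs to $L^\infty(Q_T)$, and by convexity of $Q_T$ this gives $w\in W^{1,\infty}(Q_T)$, a fortiori $w\in W^{1,\infty}_{\mathrm{loc}}(Q_T)$.

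I do not expect a serious obstacle here: the statement is essentially a bookkeeping consequence of Corollary~\ref{cor:preg} together with the trivial $L^\infty$ bound on $p$. The only points requiring a little care are to verify that the constant $C(t)$ coming from the Calder\'on--Zygmund/Sobolev chain is locally bounded in $t$ — which is exactly the elementary observation above that $\tilde F(t)$ is bounded in $L^1(\mathbb{R}^N)\cap L^\infty(\mathbb{R}^N)$ uniformly for $t$ in bounded intervals — and to note that the passage from a bounded weak gradient to a genuine Lipschitz representative is justified because the domain $Q_T$ is convex.
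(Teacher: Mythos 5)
Your proposal is correct and follows essentially the same route as the paper: the spatial bound comes from the uniform-in-$t$ estimate $w\in L^\infty((0,T);W^{2,p}(\mathbb{R}^N))$ of Corollary~\ref{cor:preg} with $p>N$ plus Sobolev embedding, and the time derivative is handled by $\partial_t w=\mathrm{e}^{-G(0)t}p\le p_M$, i.e.\ the Lipschitz-in-time bound already noted right after~\eqref{eq:bw}. You merely spell out the two points the paper leaves implicit (local boundedness of the Calder\'on--Zygmund constant in $t$, and the passage from bounded distributional gradient on the convex set $Q_T$ to a Lipschitz representative), which is fine.
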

\begin{proof}
For  $p>N$, we have
$ w  \in L^\infty((0,T);W^{2,p}(\mathbb{R}^N)) \subset L^\infty((0,T);W^{1,\infty}(\mathbb{R}^N))$, and we already saw that $w\in W^{1,\infty}((0,T); L^\infty(\RR^N))$.
\end{proof}

\subsection{The positivity sets}

In view of Corollary \ref{cor:preg},
we can introduce the open set
\begin{equation}\label{eq:Omegatilde}
\widetilde \Omega(t)=\{w (t)>0\}.
\end{equation}
Furthermore, Corollary \ref{cor:bwcont} also implies that
$$
\widetilde {\mathcal O} = \{(x,t)\in Q;\,  w (x,t)>0\}
$$
is open.
Since $p$ is a non-negative function,  the function $ w $ defined by \eqref{eq:bw} is clearly non-decreasing with respect to $t$ and so the sets $\widetilde \Omega(t)$ form a non-decreasing family.
\medskip

Next, we introduce the measurable (not necessarily open) set
\begin{equation}\label{eq:Omega(t)}
\Omega (t) = \{p(t)>0\}
\end{equation}
(this set is well defined for all $t>0$ thanks to Lemma \ref{lem:pointwise}).
Since $t\mapsto p(x,t)$ is a non-decreasing function for all $x$,
the sets $\Omega(t)$ form also a non-decreasing family.

\medskip

The proof of Theorem \ref{thm:main1} will mostly follow from the properties of the function $w(t)$ on the sets $\widetilde \Omega(t)$ and $\mathbb{R}^N\setminus \widetilde \Omega(t)$.
Of course,  we would like to say that $\Omega(t)=\widetilde \Omega(t)$; however that may not always be the case because $p$ might not be continuous with respect to $t$.
Nevertheless, we do have the following (simple) result.
\begin{lemma} \label{lem:omegap}
The following inclusions hold:
$$
\widetilde \Omega(t)\subset \Omega (t) \subset\widetilde  \Omega(s)  \quad \text{for all }0<t<s.
$$
\end{lemma}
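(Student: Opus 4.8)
The plan is to exploit the two monotonicities we already have: $t\mapsto p(x,t)$ is non-decreasing and right-continuous for every fixed $x$ (Lemma 2.3(ii)), and $t\mapsto w(x,t)$ is non-decreasing as well, since $p\ge 0$ makes the integrand in the definition of $w$ non-negative.

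For the first inclusion $\widetilde\Omega(t)\subset\Omega(t)$: fix $x$ with $w(x,t)>0$. By definition $w(x,t)=\int_0^t \mathrm{e}^{-G(0)s}p(x,s)\,ds>0$, so $p(x,s)>0$ on a subset of $(0,t)$ of positive measure; in particular there exists $s_*\in(0,t)$ with $p(x,s_*)>0$. Since $s\mapsto p(x,s)$ is non-decreasing, $p(x,t)\ge p(x,s_*)>0$, hence $x\in\Omega(t)$. (One must be slightly careful that $p(x,s_*)>0$ for a genuine $s_*<t$ rather than only for a full-measure set accumulating at $t$, but that is immediate: a function whose integral over $(0,t)$ is positive cannot vanish on all of $(0,t-\varepsilon)$ for every $\varepsilon$, or more simply the set $\{s<t:p(x,s)>0\}$ has positive measure and so is nonempty with some element $s_*<t$.)

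For the second inclusion $\Omega(t)\subset\widetilde\Omega(s)$ when $t<s$: fix $x$ with $p(x,t)>0$. By the right-continuity of $r\mapsto p(x,r)$ at $r=t$, there is $\eta>0$ with $t+\eta<s$ such that $p(x,r)\ge \tfrac12 p(x,t)>0$ for all $r\in[t,t+\eta]$. Then
$$
w(x,s)\ \ge\ \int_t^{t+\eta}\mathrm{e}^{-G(0)r}p(x,r)\,dr\ \ge\ \tfrac12 p(x,t)\int_t^{t+\eta}\mathrm{e}^{-G(0)r}\,dr\ >\ 0,
$$
using that $w(x,s)\ge w(x,s)-w(x,t)=\int_t^s\mathrm{e}^{-G(0)r}p(x,r)\,dr$ together with $p\ge 0$ on $(t,t+\eta)$ and beyond. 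Hence $x\in\widetilde\Omega(s)$.

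There is no real obstacle here; the only point requiring the full strength of Lemma 2.3 is the second inclusion, where right-continuity of $p$ in $t$ is what guarantees that positivity of $p(x,t)$ propagates over a nondegenerate time interval and therefore registers as positivity of the time-integral $w(x,s)$. The monotonicity alone would give $p(x,r)\ge p(x,t)>0$ for all $r\ge t$, which in fact already suffices and makes the argument even shorter; I would present that version. I expect the whole lemma to be a short paragraph once these observations are in place.
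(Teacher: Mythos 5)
Your argument is correct and is essentially the paper's own: both inclusions rest solely on the pointwise monotonicity of $t\mapsto p(x,t)$, with the first inclusion being the direct form of the paper's contrapositive ($p(x,t)=0$ forces $p(x,s)=0$ for $s\le t$, hence $w(x,t)=0$) and the second, in the shortened version you say you would present, being exactly the paper's estimate $w(x,s)\ge p(x,t)\int_t^s \mathrm{e}^{-G(0)\tau}\,d\tau>0$. The right-continuity detour is unnecessary, as you yourself note.
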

\begin{proof}
Since $p(x,t)=0$ implies $p(x,s)=0$ for all $s\leq t$ (monotonicity with respect to $t$), it also implies $ w (x,t)=0$, which yields $\widetilde \Omega(t)\subset \Omega (t)$.

Similarly, if $p(x,t_0)>0$, then $p(x,t)\geq p(x,t_0)$ for all $t\geq t_0$, and so
$$
w (x,s) \geq  p(x,t_0)\int_{t_0}^s \textrm{e}^{-G(0)\tau } d\tau >0 \qquad \forall s >t_0,
$$
which gives the second inclusion.
\end{proof}

Note that if $s\mapsto p(x,s)$ is continuous at $s=t$ for all $x\in \mathbb{R}^N$, then $p(x,t)>0$ implies $w(x,t)>0$ (since $p(x,s)>0$ for $s\in(t-\delta,t]$), and so  $\widetilde \Omega(t) = \Omega (t) $ in such a situation.
While this might not be true for all $t>0$, we will see in Proposition \ref{prop:n} that it is indeed the case for almost every $t>0$.

\medskip

\subsection{Proof of Theorem \ref{thm:main1}}
Theorem \ref{thm:main1} will now follow easily from the next proposition, the proof of which
relies mostly on \eqref{eq:Deltabw}.
\begin{proposition}\label{prop:n}
For all $t>0$,
\begin{equation}\label{eq:defn}
n(t) = \left\{
\begin{array}{ll}
1 & \text{a.e.~in }\widetilde \Omega(t), \\
\text{\rm e}^{G(0)t} n^0\quad &\text{a.e.~in }\mathbb{R}^N\setminus \widetilde \Omega(t).
\end{array}
\right.
\end{equation}
Moreover,
\begin{equation}\label{eq:Omegabw}
\Omega (t) \setminus \widetilde \Omega(t) \subset \{x\in \mathbb{R}^N\,;\,  n^0(x) =\text{\rm e}^{-G(0)t}\},
\end{equation}
and so \eqref{eq:defn} also holds with $\Omega (t)$ instead of $\widetilde\Omega(t)$.
\end{proposition}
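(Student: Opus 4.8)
The plan is to exploit the Poisson equation \eqref{eq:Deltabw}, $\Delta w(t)=\tilde F(t)$, together with the regularity of $w(t)$ from Corollary~\ref{cor:preg} and the monotonicity of $p$ in time. The key observation is that on the open set $\widetilde\Omega(t)=\{w(t)>0\}$ the function $w(t)$ is smooth and strictly positive, which forces $p(x,s)>0$ for $s$ in a subinterval of $(0,t)$ of positive measure, hence $p(x,t)>0$ by monotonicity, hence $n(x,t)=1$ a.e.\ there (using the Hele-Shaw graph~\eqref{eq:Hele-Shaw.graph}). Conversely, on the open set $\{w(t)=0\}^\circ$ — or more carefully on the set where $w$ vanishes — one has $p(x,s)=0$ for a.e.\ $s\in(0,t)$, so by the ODE structure $\partial_t(e^{-G(0)s}n)= e^{-G(0)s}n(G(p)-G(0))$ which vanishes when $p=0$, giving $n(x,t)=e^{G(0)t}n^0(x)$. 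The subtlety is that $\mathbb{R}^N\setminus\widetilde\Omega(t)=\{w(t)=0\}$ need not be open and need not have empty interior issues handled pointwise; one should argue a.e.\ instead, using that $\{w(t)=0\}$ is closed and that on it $w(t)$ attains its minimum, so $\nabla w(t)=0$ and $\Delta w(t)=0$ a.e.\ on $\{w(t)=0\}$ (a standard fact for $W^{2,p}$ functions: second derivatives vanish a.e.\ on level sets). Thus $\tilde F(t)=0$ a.e.\ on $\mathbb{R}^N\setminus\widetilde\Omega(t)$; but since each term in $\tilde F(t)$ has a sign issue, I would instead directly show $p(x,s)=0$ for a.e.\ $(x,s)$ with $x\in\{w(t)=0\}$, $s<t$, which is immediate from $0\le\int_0^t e^{-G(0)s}p(x,s)\,ds=w(x,t)=0$ and $p\ge0$.

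More precisely, here is the order of steps. \textbf{Step 1 (the set $\widetilde\Omega(t)$).} For $x\in\widetilde\Omega(t)$, from $w(x,t)>0$ and $p\ge0$ there is a set of $s\in(0,t)$ of positive measure with $p(x,s)>0$; pick such an $s_0$, then monotonicity of $p$ in time gives $p(x,t)\ge p(x,s_0)>0$, so $x\in\Omega(t)$; in particular $\widetilde\Omega(t)\subset\Omega(t)$, recovering Lemma~\ref{lem:omegap}, and $n(x,t)=1$ a.e.\ on $\Omega(t)\supset\widetilde\Omega(t)$ by the graph condition~\eqref{eq:Hele-Shaw.graph}. This gives the first line of~\eqref{eq:defn}. \textbf{Step 2 (the complement).} For a.e.\ $x\notin\widetilde\Omega(t)$, i.e.\ $w(x,t)=0$, we get $p(x,s)=0$ for a.e.\ $s\in(0,t)$; feeding this into the rewritten equation $\partial_t(e^{-G(0)s}n(s))=-n(s)e^{-G(0)s}(G(0)-G(p(s)))$ and integrating in $s$ over $(0,t)$ shows $e^{-G(0)t}n(x,t)=n^0(x)$ for a.e.\ such $x$ — but one has to justify this pointwise-in-$x$ integration, which is why it is cleaner to integrate \eqref{eq:Deltabw}-type identities against test functions supported (after a regularization) in $\mathbb{R}^N\setminus\widetilde\Omega(t)$, or to use a Fubini argument: the set $\{(x,s): x\notin\widetilde\Omega(t),\ s<t\}$ carries $p=0$ a.e., so for a.e.\ such $x$, $p(x,\cdot)=0$ a.e.\ on $(0,t)$, and then one applies the absolutely continuous (in time) version of~\eqref{eq:main} valid for a.e.\ $x$. \textbf{Step 3 ($\Omega(t)$ versus $\widetilde\Omega(t)$).} To prove~\eqref{eq:Omegabw}, take $x\in\Omega(t)\setminus\widetilde\Omega(t)$: then $p(x,t)>0$ but $w(x,t)=0$ forces $p(x,s)=0$ for a.e.\ $s<t$, hence by Step 2's ODE argument $n(x,t)=e^{G(0)t}n^0(x)$; on the other hand $p(x,t)>0$ means $n(x,t)=1$, giving $n^0(x)=e^{-G(0)t}$. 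Since the set $\{n^0=e^{-G(0)t}\}$ is a level set of an $L^1$ function and $\Omega(t)\setminus\widetilde\Omega(t)\subset$ it, and $n^0$ together with $e^{G(0)t}n^0$ agree with $n(t)=1$ there (value $e^{G(0)t}\cdot e^{-G(0)t}=1$), formula~\eqref{eq:defn} is unchanged if we replace $\widetilde\Omega(t)$ by $\Omega(t)$.

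The main obstacle I anticipate is \emph{Step 2}: rigorously passing from the distributional identity \eqref{eq:Deltabw} (or \eqref{eq:main}) to a pointwise-in-$x$ ODE on the bad set $\mathbb{R}^N\setminus\widetilde\Omega(t)$. The difficulty is twofold — $\mathbb{R}^N\setminus\widetilde\Omega(t)$ may have empty interior, so one cannot simply restrict the PDE to it; and $p(t)$ is only defined a.e.\ in time, so "$p(x,\cdot)=0$ a.e.\ on $(0,t)$" needs the Fubini-type argument above. I would resolve this by working with the regularized time-integrated quantities: since $w(t)\in W^{2,p}$ and $w(t)=0$, $|\nabla w(t)|=0$, $\Delta w(t)=0$ a.e.\ on $\{w(t)=0\}$ (the standard lemma on vanishing of derivatives on level sets of Sobolev functions), the right-hand side $\tilde F(t)$ of~\eqref{eq:Deltabw} vanishes a.e.\ there; expanding $\tilde F(t)=e^{-G(0)t}n(t)-n^0+\int_0^t e^{-G(0)s}n(s)(G(0)-G(p(s)))\,ds$ and using that the integral term is nonnegative while $e^{-G(0)t}n(t)-n^0$ could a priori have either sign, one still needs the extra input $p(x,s)=0$ a.e.\ on $(0,t)$ (from $w(x,t)=0$) to kill the integral term, after which $e^{-G(0)t}n(x,t)=n^0(x)$ a.e.\ on $\{w(t)=0\}$ follows. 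A clean write-up would combine this with the Fubini observation to make the almost-everywhere statements precise.
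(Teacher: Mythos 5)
Your proposal is correct and, once you settle on the clean route, it is essentially the paper's own proof: $n=1$ a.e.\ on $\widetilde\Omega(t)$ via $\widetilde\Omega(t)\subset\Omega(t)$ and the graph condition \eqref{eq:Hele-Shaw.graph}, then on $\{w(t)=0\}$ use that $\Delta w(t)=0$ a.e.\ (since $w(t)\in W^{2,p}$ and derivatives of Sobolev functions vanish a.e.\ on level sets) together with $p(x,s)=0$ for $s<t$ there to reduce \eqref{eq:Deltabw} to $\mathrm{e}^{-G(0)t}n(t)=n^0$, and finally combine $n=1$ a.e.\ on $\Omega(t)$ with that formula to get \eqref{eq:Omegabw}. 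The ODE-in-time detour you first sketch (and correctly flag as delicate) is unnecessary, and your worry about killing the integral term is resolved exactly as in the paper, by $p(x,s)=0$ for $s<t$ on the complement of $\widetilde\Omega(t)$ (Lemma \ref{lem:omegap} or your Fubini argument), which makes $G(0)-G(p(s))$ vanish there.
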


\begin{proof}
Lemma \ref{lem:omegap} gives that $p(t)>0$ in $\widetilde\Omega(t)$, and so the condition \eqref{eq:Hele-Shaw.graph} gives $n=1$ a.e.~in $\widetilde\Omega(t)$.

Next, the fact that $ w(t)  \in W^{2,p}(\mathbb{R}^N)$ implies that $\Delta  w (t)=0$ a.e.~in $\{ w  (t)=0\}=\mathbb{R}^N\setminus\widetilde\Omega(t)$, and so \eqref{eq:Deltabw} gives
$$
\textrm{e}^{-G(0)t} n(t)-n^0 + \int_0^t \textrm{e}^{-G(0)s} n(s)\big(G(0)-G(p(s))\big)\, ds = 0 \quad \text{a.e.~in }  \mathbb{R}^N\setminus \widetilde\Omega(t).
$$
Furthermore, Lemma \ref{lem:omegap} also implies that
for all $s<t$, we have
$ p(s)=0$ a.e.~in $\mathbb{R}^N\setminus\widetilde\Omega(t)$ and so the above formula reduces to
$$\textrm{e}^{-G(0)t} n(x,t)-n^0(x)  = 0 \quad \text{a.e.~in }\mathbb{R}^N\setminus\widetilde\Omega(t),
$$
which completes the proof of~\eqref{eq:defn}.

Finally, we note that the proof above actually gives $n(t)=1$ a.e.~in $\Omega(t)$, and not just in $\widetilde\Omega(t)$.
So, for almost all $x$ in $\Omega(t)\setminus\widetilde\Omega(t)$ we have $n(x,t)=\textrm{e}^{G(0)t} n^0(x) =1 $, which implies \eqref{eq:Omegabw}.
\end{proof}

\noindent\emph{Remark. } Once $\widetilde\Omega(t)\supset\mathop{\rm supp} n^0$, we have $\Omega(t)=\widetilde\Omega(t)$.

\medskip

We recall  that the function $n(t)$ was well defined for all $t$ as a function in $L^1(\RR^N)$.
In view of the proposition above, we can now assume that $n$ is defined pointwise by the formula
\begin{equation}\label{eq:defnb}
n(x,t) = \left\{
\begin{array}{ll}
1 & \mbox{ for all $x$ in } \widetilde\Omega(t), \\
\textrm{e}^{G(0)t} n^0(x) &  \mbox{  for all $x$ in }  \mathbb{R}^N\setminus \widetilde \Omega(t).
\end{array}
\right.
\end{equation}

Formula \eqref{eq:Omegabw} can also be used to characterize the times for which $\Omega(t)\neq \widetilde\Omega(t)$.
Indeed, since $n^0\in BV(\mathbb{R}^N)$, a classical consequence of the coarea formula is that $H^{n-1}(\{n^0=\lambda\})<\infty$ for almost all $\lambda\in \mathbb{R}$. In particular, the Lebesgue measure of
$ \{x\in \mathbb{R}^N\,;\,  n^0(x) =\textrm{e}^{-G(0)t}\}$ vanishes for almost all $t>0$.
\begin{corollary}\label{cor:tnt}
Let $(n,p)$ be a stiff limit solution to~\eqref{eq:main}--\eqref{eq:initial.data}. Then
$$
\widetilde \Omega(t)=\Omega (t) \quad\text{a.e. } t\geq 0.
$$
\end{corollary}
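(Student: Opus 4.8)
The plan is to read off the identity from the two inclusions already available, the only genuine input being that a suitable level set of $n^0$ has zero Lebesgue measure for almost every $t$. First, Lemma~\ref{lem:omegap} gives $\widetilde\Omega(t)\subset\Omega(t)$ for every $t>0$, so I only need to control $\Omega(t)\setminus\widetilde\Omega(t)$. Second, the inclusion~\eqref{eq:Omegabw} of Proposition~\ref{prop:n} confines this difference to a level set of the initial datum,
\begin{equation*}
\Omega(t)\setminus\widetilde\Omega(t)\subset\{x\in\mathbb{R}^N:\ n^0(x)=\mathrm{e}^{-G(0)t}\}.
\end{equation*}
Hence it suffices to show that $|\{n^0=\mathrm{e}^{-G(0)t}\}|=0$ for a.e.\ $t>0$; combined with the first inclusion this gives $|\Omega(t)\,\triangle\,\widetilde\Omega(t)|=0$, i.e.\ $\Omega(t)=\widetilde\Omega(t)$ up to a Lebesgue-null set, for a.e.\ $t$.

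For the level sets I would argue as follows. The sets $\{n^0=\lambda\}$, $\lambda\in\mathbb{R}$, are pairwise disjoint, so, $\mathbb{R}^N$ being $\sigma$-finite, at most countably many of them carry positive Lebesgue measure; equivalently, as suggested in the text, the coarea formula for the $BV$ function $n^0$ gives $H^{N-1}(\{n^0=\lambda\})<\infty$, and hence $|\{n^0=\lambda\}|=0$, for a.e.\ $\lambda$. Since hypothesis~\eqref{eq:G} forces $G(0)>G(p_M)=0$, the map $t\mapsto\mathrm{e}^{-G(0)t}$ is a strictly decreasing $C^1$ diffeomorphism of $[0,\infty)$ onto $(0,1]$, so the change of variables $\lambda=\mathrm{e}^{-G(0)t}$ turns ``almost every $\lambda$'' into ``almost every $t$''. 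Thus $|\{n^0=\mathrm{e}^{-G(0)t}\}|=0$ for a.e.\ $t>0$, and the proof is complete.

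Since this is essentially bookkeeping built on Lemma~\ref{lem:omegap} and Proposition~\ref{prop:n}, there is no real obstacle here. The two points worth a word are that the identity is understood modulo Lebesgue-null sets --- which is harmless, because $\Omega(t)$ and $\widetilde\Omega(t)$ enter all later arguments only through a.e.\ statements about $n(t)$ and $p(t)$ --- and that the above change of variables is legitimate precisely because $G(0)\neq0$, which~\eqref{eq:G} guarantees.
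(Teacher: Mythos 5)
Your argument is exactly the paper's: the inclusion $\widetilde\Omega(t)\subset\Omega(t)$ from Lemma~\ref{lem:omegap}, the confinement of $\Omega(t)\setminus\widetilde\Omega(t)$ to the level set $\{n^0=\mathrm{e}^{-G(0)t}\}$ via~\eqref{eq:Omegabw}, and the observation (stated in the paper through the coarea formula for the $BV$ function $n^0$, for which your countability-of-disjoint-level-sets remark is an equally valid shortcut) that this level set is Lebesgue-null for a.e.\ $t$ since $G(0)>0$. So the proposal is correct and follows essentially the same route, with the same implicit understanding that the equality of the two sets is meant up to Lebesgue-null sets.
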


It is also now immediate to show that
if $n^0$ is the characteristic function of a Cacciopoli set, $n^0=\chi_{\Omega_0}$, then
$\Omega(t)=\widetilde \Omega(t)$ for all $t>0$,  and $n(t)$ is also the characteristic function of a set for all $t>0$, $n(t)=\chi_{\Omega(t)}$.

Since we know that $n(t)\in BV(\mathbb{R}^N)$, we deduce that $\widetilde\Omega(t)$ is a set with finite perimeter. However,  such a result does not even imply that the boundary $\pa\widetilde\Omega(t)$ has zero Lebesgue measure; see \cite[pp.~7--8]{Giusti}. In  order to get further information about $\pa\widetilde\Omega(t)$, we need further regularity estimates on the pressure $p$.

\section{Further regularity of the pressure $p$}
\label{sect:regularity.pressure}
\setcounter{equation}{0}

In this section, we prove some new estimates for $p$, compared to those in \cite{PQV}, that will be used in the next section to study the regularity of $\pa\widetilde\Omega(t)$.

We now give the main result of this section.
\begin{proposition}\label{prop:alpha-HS} Let $(n,p)$ be a stiff limit solution to~\eqref{eq:main}--\eqref{eq:initial.data}. There exists a constant such that the stiff limit pressure satisfies, for $a. \; e. \; t>0$
\begin{equation}\label{eq:H1limit}
\int_{\mathbb{R}^N} |\na p(t)|^2\leq C\left( 1 + \frac{1}{\gamma t}\right) \text{\rm e}^{G(0)t}.
\end{equation}

For all $\alpha \geq 0$ and $T>0$, there exists a locally bounded constant $C(T,\alpha)$ such that
\begin{eqnarray}\label{eq:alpha1HS}
&\displaystyle{\alpha} \int_0^T \int_{\mathbb{R}^N} p^{\alpha-1} |\na p|^4 \leq C(T,\alpha),
\\
\label{eq:alpha2HS}
&\displaystyle\alpha \int_0^T \int_{\mathbb{R}^N} | \Delta p^{(3  + \alpha)/2}|^2 \leq C(T,\alpha).
\end{eqnarray}
\end{proposition}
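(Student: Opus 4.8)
The plan is to work at the level of the approximate solutions $(n_\gamma, p_\gamma)$, derive the estimates there with constants uniform in $\gamma$ (or with the explicit $\gamma$-dependence shown in~\eqref{eq:H1limit}), and then pass to the limit using the strong $L^1$-convergence of $p_\gamma$ together with lower semicontinuity of the relevant convex functionals. First I would recall the Aronson--Bénilan inequality~\eqref{eq:deltag}, which says $\Delta p_\gamma + G(p_\gamma) \geq -c\, \text{e}^{-\gamma c t}/(1 - \text{e}^{-\gamma c t})$; multiply this by a suitable non-negative test weight and integrate. For~\eqref{eq:H1limit}, the natural choice is to multiply the equation~\eqref{eq:pmep} (or rather the Aronson--Bénilan bound combined with it) by $p_\gamma$ and integrate over $\mathbb{R}^N$: integrating $\int p_\gamma \Delta p_\gamma = -\int |\na p_\gamma|^2$, and using $0\le p_\gamma\le p_M$ together with the $L^1$-bound~\eqref{eq:L1pg}, one controls $\int|\na p_\gamma|^2$ by the right-hand side of the Aronson--Bénilan term integrated against $p_\gamma$, giving the factor $(1 + 1/(\gamma t))\text{e}^{G(0)t}$. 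Then letting $\gamma\to\infty$, Fatou/weak lower semicontinuity of $p\mapsto \int|\na p|^2$ on $L^1$ gives~\eqref{eq:H1limit} for a.e.\ $t$.

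For the higher estimates~\eqref{eq:alpha1HS}--\eqref{eq:alpha2HS}, the idea is to test the parabolic equation for $p_\gamma$ against powers of $p_\gamma$ and its time derivative. Concretely, rewrite~\eqref{eq:pmep} as $\pa_t p_\gamma = \gamma p_\gamma w_\gamma + |\na p_\gamma|^2$ with $w_\gamma := \Delta p_\gamma + G(p_\gamma)$, and test with $p_\gamma^{\alpha} w_\gamma$ or with $\pa_t(p_\gamma^{(3+\alpha)/2})$-type quantities. The key algebraic fact is that $\int p_\gamma^{\alpha}\pa_t p_\gamma\, \Delta p_\gamma$, after integration by parts in space, produces $-\int |\na p_\gamma|^2 \pa_t(\text{power})$ plus a term $\int |\Delta p_\gamma^{(3+\alpha)/2}|^2$-like contribution; combining the space integration by parts with the structural identity $2 p^{(1+\alpha)/2}\Delta p^{(1+\alpha)/2}$ vs.\ $p^\alpha\Delta p$ yields, after time integration on $(0,T)$ and absorption, a bound of the form $\alpha \int_0^T\!\!\int p^{\alpha-1}|\na p|^4 + \alpha\int_0^T\!\!\int |\Delta p^{(3+\alpha)/2}|^2 \le C$. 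The boundary terms at $t=0$ are controlled by $\|p_\gamma(0)\|_{L^\infty}\le p_M$ and the $L^1$ bounds; the terms coming from $G$ are lower order since $G\in C^1$ and $p_\gamma\le p_M$. One must be careful that the constant is locally bounded in $\alpha$ (it degenerates as $\alpha\to\infty$) and in $T$ (it grows like $\text{e}^{G(0)T}$ through~\eqref{eq:L1pg}).

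The main obstacle I anticipate is making the formal computation rigorous: the manipulations require multiplying the equation by powers of $p_\gamma$ that vanish or blow up where $p_\gamma=0$, and differentiating $w_\gamma$, which is only a distribution a priori. The clean way around this is to do everything on the $\gamma$-level where $p_\gamma$ is a smooth (or at least $H^1$, bounded, with the Aronson--Bénilan one-sided $L^\infty$ bound on $w_\gamma$) solution of a uniformly parabolic equation away from $\{p_\gamma=0\}$, use a truncation $p_\gamma \vee \eta$ and let $\eta\to 0$ by monotone/dominated convergence, and only then send $\gamma\to\infty$ invoking strong $L^1(Q_T)$-convergence of $p_\gamma$ to $p$ (hence a.e.\ convergence along a subsequence) and Fatou's lemma on the non-negative integrands $p^{\alpha-1}|\na p|^4$ and $|\Delta p^{(3+\alpha)/2}|^2$; for the latter one needs in addition the weak convergence $\na p_\gamma \rightharpoonup \na p$ from~\eqref{eq:H1limit} and lower semicontinuity of the $L^2$-norm of second derivatives, applied on the region where $p>0$. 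A secondary technical point is tracking the $\gamma$-dependence in the Aronson--Bénilan term so that it does not spoil the limit: in~\eqref{eq:alpha1HS}--\eqref{eq:alpha2HS} the factor $\gamma p_\gamma w_\gamma$ is exactly what is being estimated (it becomes the complementarity-type term), so rather than discarding it one keeps it with the correct sign, using $\gamma\int_0^T\!\!\int p_\gamma^{1+\alpha}|w_\gamma|^2 \ge 0$ as a good term and the Aronson--Bénilan lower bound to control its negative part, which contributes only the $\int_0^T c\,\text{e}^{-\gamma ct}/(1-\text{e}^{-\gamma ct})\,(\cdots)\,dt$, uniformly bounded in $\gamma$.
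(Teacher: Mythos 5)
Your proposal follows essentially the same route as the paper: prove the estimates at the $\gamma$-level by multiplying the pressure equation~\eqref{eq:pmep} by suitable powers of $p_\gamma$ (and by $\Delta p_\gamma+G(p_\gamma)$), exploit the Aronson--B\'enilan one-sided bounds of Lemma~\ref{lem:delta} to control the sign-indefinite terms, and pass to the limit by lower semicontinuity. The only places where your sketch drifts slightly from the paper's actual argument are minor: for~\eqref{eq:H1limit} one integrates~\eqref{eq:pmep} in space directly (no extra multiplication by $p_\gamma$, since the factor is already there), and for~\eqref{eq:alpha2HS} the paper does not invoke weak lower semicontinuity of second derivatives but rather identifies $\Delta p^{(3+\alpha)/2}$ in the limit via the vanishing of $p_\gamma^{(1+\alpha)/2}(\Delta p_\gamma+G(p_\gamma))$ in $L^2$, which comes out of the same energy identity that gives~\eqref{eq:alpha1}.
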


This proposition is a direct consequence of the following  estimates which use strongly the results of Lemma~\ref{lem:delta}.
\begin{lemma}\label{lm:H1}
There exists a constant $C$ such that
\begin{equation}\label{eq:H1}
\int_{\mathbb{R}^N} |\na p_\gamma(t)|^2\leq C\left( 1 + \frac{1}{\gamma t}\right) \text{\rm e}^{G(0)t}.
\end{equation}
For all $T>0$,  there is a locally bounded constant $C(T)$ such that
\begin{equation}\label{eq:H1gamma}
\gamma\int_\frac{1}{\gamma}^T\int p_\gamma\big(\Delta p_\gamma +G(p_\gamma)\big)^2 \leq C(T).
\end{equation}
For all $\alpha \geq 0$ and $T>0$, there exists a locally bounded constant $C(T,\alpha)$ such that
\begin{equation}\label{eq:alpha1}
{\alpha} \int_\frac{1}{\gamma}^T \int_{\mathbb{R}^N} p_\gamma^{\alpha-1} |\na p_\gamma|^4 \leq C(T,\alpha).
\end{equation}
\end{lemma}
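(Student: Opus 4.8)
The plan is to derive all three estimates from the Aronson--Bénilan type inequalities of Lemma~\ref{lem:delta} by testing suitable equations against suitable powers of $p_\gamma$. Throughout I write $p=p_\gamma$ and recall the shorthand
\begin{equation*}
A_\gamma(t) := \Delta p_\gamma(t) + G(p_\gamma(t)), \qquad A_\gamma(t) \geq -c\,\frac{\mathrm{e}^{-\gamma c t}}{1-\mathrm{e}^{-\gamma c t}} =: -c\,\psi_\gamma(t),
\end{equation*}
so that $A_\gamma \geq -c\psi_\gamma$ with $\psi_\gamma(t) \le \frac{1}{\gamma c t}$ for, say, $t\in(0,T]$ (since $1-\mathrm{e}^{-x}\ge x\mathrm{e}^{-x}$ gives $\psi_\gamma(t)\le \frac{1}{\gamma c t}$, and also $\psi_\gamma$ is decreasing). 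From~\eqref{eq:pmep}, $\partial_t p = \gamma p A_\gamma + |\na p|^2$.

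\emph{Step 1: the $H^1$ bound~\eqref{eq:H1}.} Multiply~\eqref{eq:pmep} by $1$ and integrate in space; more efficiently, test the equation $\partial_t p = \gamma p A_\gamma + |\na p|^2$ after rewriting $\gamma p A_\gamma = \gamma p \Delta p + \gamma p G(p)$. Using $\gamma p \Delta p = \div(\gamma p \na p) - \gamma|\na p|^2$ and integrating over $\RR^N$ we obtain $\frac{d}{dt}\int p = -(\gamma-1)\int|\na p|^2 + \gamma\int p\,G(p)$; this is not yet what we want. Instead I will estimate $\int|\na p|^2$ directly: multiply~\eqref{eq:pme} (in the form $\partial_t n_\gamma = \div(n_\gamma\na p_\gamma)+n_\gamma G(p_\gamma)$) by $p_\gamma$, or better, multiply $A_\gamma \geq -c\psi_\gamma$ by $p_\gamma\ge 0$ and integrate: $\int p\Delta p + \int pG(p) \ge -c\psi_\gamma \int p$, i.e. $\int|\na p|^2 \le \int pG(p) + c\psi_\gamma(t)\int p(t) \le G(0)\int p(t) + c\psi_\gamma(t)\int p(t)$. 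Now invoke the $L^1$ bound~\eqref{eq:L1pg}, $\int p_\gamma(t)\le C\mathrm{e}^{G(0)t}$, together with $\psi_\gamma(t)\le \frac{1}{\gamma c t}$, to conclude $\int|\na p_\gamma(t)|^2 \le C\big(1+\frac{1}{\gamma t}\big)\mathrm{e}^{G(0)t}$, which is~\eqref{eq:H1} (and passing $\gamma\to\infty$ with lower semicontinuity of the norm under $L^1$ convergence gives~\eqref{eq:H1limit}).

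\emph{Step 2: the weighted space-time bound~\eqref{eq:H1gamma}.} Here the natural test function is $A_\gamma + c\psi_\gamma \ge 0$, or equivalently one differentiates $A_\gamma$ in time. Following Aronson--Bénilan, set $v = \partial_t p / p$ (formally $v = \gamma A_\gamma + |\na p|^2/p$); the inequality~\eqref{eq:patpg} says $v \ge -\gamma c\psi_\gamma$. The cleanest route is to test the evolution equation for $A_\gamma$: from $\partial_t p = \gamma p A_\gamma + |\na p|^2$ one computes, after applying $\Delta$ and adding $G'(p)\partial_t p$, a parabolic inequality for $A_\gamma + c\psi_\gamma$. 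Multiplying that by $p_\gamma$ and integrating over $(1/\gamma,T)\times\RR^N$, the good term $\gamma\int\!\!\int p_\gamma A_\gamma^2$ appears, while the remaining terms are controlled using $p\le p_M$, $|G'|$ bounded, Step~1, and the elementary bounds on $\psi_\gamma$ and $\int_{1/\gamma}^T\psi_\gamma(t)\,dt \le \frac{1}{\gamma c}\log(\gamma T)$ — no, better: $\gamma\int_{1/\gamma}^T \psi_\gamma \le C\log(\gamma T)/c$, which one absorbs; the choice of lower limit $1/\gamma$ (rather than $0$) is exactly what makes $\psi_\gamma(1/\gamma)=O(1)$ and these logarithmic-in-$\gamma$ contributions harmless after one more integration against $p_M$. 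This is the delicate computation; it is essentially Lemma~\ref{lem:delta}'s proof pushed one order further, and I expect it to be the main obstacle. The boundary term at $t=1/\gamma$ is handled because $\int p_\gamma(1/\gamma) A_\gamma(1/\gamma)$, though possibly large, is multiplied by quantities that decay; alternatively one integrates the differential inequality from $1/\gamma$ and only keeps the good sign.

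\emph{Step 3: the estimate~\eqref{eq:alpha1}.} Multiply $A_\gamma \ge -c\psi_\gamma$ by $p_\gamma^{\alpha}|\na p_\gamma|^2 \ge 0$ — no; instead, the identity to exploit is $p^{\alpha-1}|\na p|^4 = |\na p|^2\cdot p^{\alpha-1}|\na p|^2$, and one wants $|\na p|^2$ to come from $\partial_t p - \gamma p A_\gamma$. I will multiply~\eqref{eq:pmep} by $p_\gamma^{\alpha-1}|\na p_\gamma|^2$ and integrate over $(1/\gamma,T)\times\RR^N$: the term $\int\!\!\int p^{\alpha-1}|\na p|^4$ is produced, $\int\!\!\int \partial_t p \cdot p^{\alpha-1}|\na p|^2$ integrates by parts in time and space against $\partial_t(\text{power of }p)$ and is controlled by Step~1 plus $p\le p_M$, and the term $\gamma\int\!\!\int p^{\alpha}A_\gamma\, p^{\alpha-2}\cdots$ — the $\gamma pA_\gamma$ contribution — is bounded using Step~2 (Cauchy--Schwarz: $\gamma\int\!\!\int p A_\gamma\cdot p^{\alpha-1}|\na p|^2 \le (\gamma\int\!\!\int pA_\gamma^2)^{1/2}(\gamma\int\!\!\int p^{2\alpha-1}|\na p|^4)^{1/2}$, and the last factor is reabsorbed for $\alpha$ in a suitable range, then bootstrapped). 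One multiplies through by $\alpha$ to make the constant uniform as $\alpha\to 0$. Combining Steps~1--3 and sending $\gamma\to\infty$ (using strong $L^1$ convergence of $p_\gamma$ and weak lower semicontinuity, plus monotone convergence to pass from $\int_{1/\gamma}^T$ to $\int_0^T$) yields Proposition~\ref{prop:alpha-HS}, including~\eqref{eq:alpha2HS} via the algebraic identity $\Delta p^{(3+\alpha)/2} = c_\alpha p^{(1+\alpha)/2}\Delta p + c_\alpha' p^{(\alpha-1)/2}|\na p|^2$ and expanding the square.
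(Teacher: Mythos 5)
Your Step 1 is correct and slightly cleaner than the paper's own route: rather than integrating the rewritten PME \eqref{eq:pmep} in $x$ and invoking \eqref{eq:patpg}, you multiply \eqref{eq:deltag} directly by $p_\gamma$, integrate by parts, and use the $L^1$ bound \eqref{eq:L1pg}. That yields $\int|\na p_\gamma|^2\le (G(0)+c\psi_\gamma(t))\int p_\gamma$, which together with $\psi_\gamma(t)\le \frac{1}{\gamma ct}$ gives \eqref{eq:H1}. Fine.

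Steps 2 and 3, however, diverge from what actually works, and neither is carried out.

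For \eqref{eq:H1gamma} you propose to differentiate $A_\gamma=\Delta p_\gamma+G(p_\gamma)$ in time, derive a parabolic inequality for $A_\gamma+c\psi_\gamma$, and test it against $p_\gamma$. That is the Aronson--B\'enilan maximum-principle computation pushed one order further, and you rightly call it "the main obstacle" — it is. Testing that inequality against $p_\gamma$ introduces $\gamma\int p_\gamma^2\Delta A_\gamma$, whose integration by parts regenerates $\gamma\int p_\gamma A_\gamma^2$ with the wrong sign and also $\gamma\int A_\gamma|\na p_\gamma|^2$, and the boundary terms in time are delicate. The paper does something much simpler: multiply \eqref{eq:pmep} by $A_\gamma$ itself and integrate in space. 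One then finds
\[
\frac{d}{dt}\int\Bigl(-\tfrac12|\na p_\gamma|^2+H(p_\gamma)\Bigr)
 = \gamma\int p_\gamma A_\gamma^2 + \int|\na p_\gamma|^2 A_\gamma,
\]
and the lower bound $A_\gamma\ge -c\psi_\gamma$ turns the last term into $-c\psi_\gamma\int|\na p_\gamma|^2$, which is integrable in $t$ on $[1/\gamma,T]$ uniformly in $\gamma$ by Step~1 (note $\int_{1/\gamma}^T\psi_\gamma\,dt=O(1/\gamma)$, not just $O(\log(\gamma T)/\gamma)$). There is nothing to bootstrap.

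For \eqref{eq:alpha1} the test function you choose, $p_\gamma^{\alpha-1}|\na p_\gamma|^2$, does not close. Multiplying \eqref{eq:pmep} by it gives
\[
\int p_\gamma^{\alpha-1}|\na p_\gamma|^2\,\pa_t p_\gamma
 = \gamma\int p_\gamma^\alpha A_\gamma|\na p_\gamma|^2 + \int p_\gamma^{\alpha-1}|\na p_\gamma|^4,
\]
and there are two obstructions. First, the time-derivative term on the left has no usable integration-by-parts structure: attempting to produce $\frac{d}{dt}\int p_\gamma^\alpha|\na p_\gamma|^2$ regenerates $\int p_\gamma^{\alpha-1}|\na p_\gamma|^2\pa_t p_\gamma$ itself (your "integrate by parts in time and space" is circular). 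Second, the Cauchy--Schwarz step you sketch is off by a factor of $\gamma^{1/2}$: the cross term has coefficient $\gamma$, while \eqref{eq:H1gamma} only controls $\bigl(\gamma\int\!\!\int p_\gamma A_\gamma^2\bigr)^{1/2}$, so the product $\gamma\int p_\gamma^\alpha A_\gamma|\na p_\gamma|^2$ cannot be absorbed against $\int p_\gamma^{\alpha-1}|\na p_\gamma|^4$ no matter how you split it. The correct multiplier, as in the paper, is $p_\gamma^\alpha A_\gamma$: the integration by parts of $\int p_\gamma^\alpha A_\gamma\pa_t p_\gamma$ (together with one substitution of $\pa_t p_\gamma=\gamma p_\gamma A_\gamma+|\na p_\gamma|^2$) produces \emph{both} good terms $\gamma\int p_\gamma^{1+\alpha}A_\gamma^2$ and $\frac{\alpha}{2}\int p_\gamma^{\alpha-1}|\na p_\gamma|^4$ on the same side, with the single remaining cross term $(1+\tfrac{\alpha\gamma}{2})\int p_\gamma^\alpha A_\gamma|\na p_\gamma|^2$ handled by $A_\gamma\ge -c\psi_\gamma$ and the uniform bound $\gamma\int_{1/\gamma}^T\psi_\gamma\,dt=O(1)$.
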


Because of Lipschitz continuity at the free boundary, the inequalities~\eqref{eq:alpha1HS} and \eqref{eq:alpha1} are sharp in terms of $\alpha$ and we cannot expect a bound on $\int_0^T\int{\mathbb{R}^N} \frac{|\nabla p|^4}{p}$.

\noindent {\em Proof of Proposition \ref{prop:alpha-HS}}.
The estimate \eqref{eq:H1limit} and \eqref{eq:alpha1HS} follow immediately from  \eqref{eq:H1} and \eqref{eq:alpha1}. To prove \eqref{eq:alpha2HS}, we take $\alpha >0$ and multiply  equation \eqref{eq:H1gamma} by $p^\alpha$. Then, in $L^2( Q_T)$, we have
$$
 p_\gamma^{(1+\alpha)/2}  \big(\Delta p_\gamma+G(p_\gamma)\big) = {\rm div}\big( p_\gamma^{(1+\alpha)/2}  \nabla p_\gamma\big) - \frac{1+\alpha}{2}p_\gamma^{(\alpha-1)/2} |\nabla p_\gamma|^2 +p_\gamma^{(1+\alpha)/2} G(p_\gamma) \underset{\gamma \to \infty}{\longrightarrow}  0  .
$$
Taking into account \eqref{eq:alpha1}, we obtain \eqref{eq:alpha2HS}.
\qed

These bounds also provide us with a convenient way to define the strong form of the complementary relation, which reads formally, for $\alpha >0$,
$$
 p^{(1+\alpha)/2}  \big(\Delta p +G(p)\big) =0.
$$
It is rigorously stated as
\begin{equation}\label{eq:complementary_strong}
{\rm div}\big( p^{(1+\alpha)/2}  \nabla p \big) - \frac{1+\alpha}{2}p^{(\alpha-1)/2} |\nabla p |^2 +p^{(1+\alpha)/2} G(p ) =0 \quad \text{a.e.~in }Q.
\end{equation}

\noindent {\em Proof of Lemma \ref{lm:H1}}.
To  prove \eqref{eq:H1}, we integrate \eqref{eq:pmep} with respect to $x\in \mathbb{R}^N$ and use \eqref{eq:patpg}. We get:
\begin{align*}
(\gamma-1)\int_{\mathbb{R}^N} |\na p_\gamma(t)|^2 & = \gamma \int_{\mathbb{R}^N} p_\gamma(t) G(p_\gamma(t)) - \int_{\mathbb{R}^N}\pa_t p_\gamma(t) \\
&\leq   \gamma \int_{\mathbb{R}^N} p_\gamma(t) G(p_\gamma(t)) + \gamma c \frac{\textrm{e}^{-\gamma ct}}{1-\textrm{e}^{-\gamma c t}} \int_{\mathbb{R}^N}  p_\gamma(t).
\end{align*}
We deduce that
\begin{align*}
\int_{\mathbb{R}^N} |\na p_\gamma(t)|^2  & \leq \frac{\gamma}{\gamma-1}  \int_{\mathbb{R}^N} p_\gamma(t) G(p_\gamma(t))+\frac{ \gamma}{\gamma-1} c \frac{\textrm{e}^{-\gamma ct}}{1-\textrm{e}^{-\gamma c t}}\int_{\mathbb{R}^N}  p_\gamma(t) \\
& \leq \frac{\gamma}{\gamma-1} \left(1+\frac{\textrm{e}^{-\gamma ct}}{1-\textrm{e}^{-\gamma c t}}\right)  \int_{\mathbb{R}^N}  p_\gamma(t),
\end{align*}
which gives \eqref{eq:H1} after using \eqref{eq:L1pg}.

To prove \eqref{eq:H1gamma}, we define $H(p)= \int_0^p G(q)dq$, multiply \eqref{eq:pmep} by $\Delta p_\gamma +G(p_\gamma)$, and integrate,
\begin{align*}
&\frac{d}{dt} \int_{\mathbb{R}^N} \left(-\frac{|\na p_\gamma(t)|^2 }{2} + H(p_\gamma(t))\right)\\
 &\qquad\qquad= \gamma\int_{\mathbb{R}^N} p_\gamma\big(\Delta p_\gamma(t) +G(p_\gamma(t))\big)^2 +\int |\na p_\gamma(t)|^2 \big(\Delta p_\gamma(t)+G(p_\gamma(t))\big)\\
&\qquad\qquad\geq
  \gamma\int_{\mathbb{R}^N} p_\gamma(t)\big(\Delta p_\gamma(t) +G(p_\gamma(t))\big)^2 - c \frac{\textrm{e}^{-\gamma ct}}{1-\textrm{e}^{-\gamma c t}} \int_{\mathbb{R}^N} |\na p_\gamma(t)|^2.
\end{align*}
Therefore,
\begin{align*}
\gamma \int_\frac{1}{\gamma}^T\int_{\mathbb{R}^N} p_\gamma\big(\Delta p_\gamma +&G(p_\gamma)\big)^2 \\
&\leq  c \int_\frac{1}{\gamma}^T \left(\frac{\textrm{e}^{-\gamma ct}}{1-\textrm{e}^{-\gamma c t}} \int_{\mathbb{R}^N}|\na p_\gamma(t)|^2\right) \, dt  + \int_{\mathbb{R}^N}\left( \frac{|\na p_\gamma (\frac{1}{\gamma})|^2}{2} + H(p_\gamma(T))\right).
\end{align*}
 The result follows using \eqref{eq:H1}.

It remains to prove~\eqref{eq:alpha1}. We multiply \eqref{eq:pmep} by $p_\gamma^{\alpha} \big(\Delta p_\gamma +G(p_\gamma)\big)$ and integrate in space,
\begin{align*}
\int_{\mathbb{R}^N}  p_\gamma^{\alpha}(t) \pa_t p_\gamma(t)  \big(\Delta p_\gamma(t) +G(p_\gamma(t))\big) =&\int_{\mathbb{R}^N}  \gamma p_\gamma^{1+\alpha}(t) \big(\Delta p_\gamma(t)+G(p_\gamma(t))\big)^2\\&
+ \int_{\mathbb{R}^N} |\na p_\gamma(t)|^2 p_\gamma^{\alpha}(t) \big(\Delta p_\gamma(t) +G(p_\gamma(t))\big).
\end{align*}
To handle the first term in the left hand side, we integrate by parts and write
\begin{align*}
\int_{\mathbb{R}^N}  p_\gamma^{\alpha}(t) \pa_t p_\gamma(t)  \Delta p_\gamma(t) & = -\frac{1}{2}\int_{\mathbb{R}^N} p_\gamma^{\alpha}(t) \pa_t |\na p_\gamma(t)|^2  -\alpha\int_{\mathbb{R}^N} p_\gamma^{\alpha-1}(t) \pa_t p_\gamma(t) |\na p_\gamma(t)|^2\\
& =  -\frac{1}{2}\frac{d}{dt} \int_{\mathbb{R}^N} p_\gamma^{\alpha}(t)  |\na p_\gamma(t)|^2- \frac{\alpha}{2}  \int_{\mathbb{R}^N} p_\gamma^{\alpha-1}(t)\pa_t p_\gamma(t)  |\na p_\gamma(t)|^2\\
& =  -\frac{1}{2}\frac{d}{dt} \int_{\mathbb{R}^N} p_\gamma^{\alpha}(t)  |\na p_\gamma(t)|^2
-\frac{\alpha}{2}  \int_{\mathbb{R}^N} p_\gamma^{\alpha-1}(t) |\na p_\gamma(t)|^4\\
 &\quad-\frac{\alpha\gamma}{2} \int_{\mathbb{R}^N} p_\gamma^{\alpha}(t)  |\na p_\gamma(t)|^2\big(\Delta p_\gamma(t) +G(p_\gamma(t))\big).
\end{align*}
We deduce, with $H^{(\alpha)}(p)= \int_0^p q^{\alpha} G(q)dq$, that
\begin{equation} \label{eq:energy} \begin{array}{rl}
 \displaystyle -\frac{1}{2}\frac{d}{dt} &\displaystyle \int_{\mathbb{R}^N} p_\gamma^{\alpha}(t)  |\na p_\gamma(t)|^2 + \frac{d}{dt}\int_{\mathbb{R}^N} H^{(\alpha)}(p_\gamma(t))=  \gamma \displaystyle \int_{\mathbb{R}^N}  p_\gamma^{1+\alpha}(t) \big(\Delta p_\gamma(t)+G(p_\gamma(t))\big)^2
 \\[12pt]
  &  + (1+ \frac{\alpha\gamma}{2})\displaystyle \int_{\mathbb{R}^N} p_\gamma^{\alpha}(t)  |\na p_\gamma(t)|^2\big(\Delta p_\gamma(t) +G(p_\gamma(t))\big) +  \frac{\alpha}{2}  \int_{\mathbb{R}^N} p_\gamma^{\alpha-1}(t) |\na p_\gamma(t)|^4.
\end{array}\end{equation}

Using \eqref{eq:deltag}, we conclude that
\begin{align*}
 \frac{\alpha}{2}  &\int_{\mathbb{R}^N} p_\gamma^{\alpha-1}(t) |\na p_\gamma(t)|^4+ \gamma \int_{\mathbb{R}^N}  p_\gamma^{1+\alpha}(t) \big(\Delta p_\gamma(t)+G(p_\gamma(t))\big)^2\\
 & \leq C \left(1+\frac{\alpha}{2} \gamma \right) \frac{\textrm{e}^{-\gamma ct}}{1-\textrm{e}^{-\gamma c t}}
  \int_{\mathbb{R}^N} p_\gamma^{\alpha}(t)  |\na p_\gamma(t)|^2 -\frac{1}{2}\frac{d}{dt} \int_{\mathbb{R}^N} p_\gamma^{\alpha}(t)  |\na p_\gamma(t)|^2 + \frac{d}{dt}\int_{\mathbb{R}^N} H^{(\alpha)}(p_\gamma(t)).
\end{align*}
Using the bound~\eqref{eq:H1limit} in Lemma~\ref{lm:H1},  the result follows after time integration because the second term is controlled thanks to~\eqref{eq:H1gamma}.
\qed

\subsection{Energy disipation}
When $\alpha = \frac 1 \gamma$, equality \eqref{eq:energy} can also be written as
\begin{align*}
\frac{d}{dt} \int_{\mathbb{R}^N} &\left(n_\gamma(t)\, \frac{|\na p_\gamma(t)|^2}{2}  - H^{({\frac 1 \gamma})}(p_\gamma(t))\right)\\
& =-  \frac{1}{2\gamma}  \int_{\mathbb{R}^N} \frac{n_\gamma(t)}{ p_\gamma(t)} \,|\na p_\gamma(t)|^4 - \gamma \int_{\mathbb{R}^N}   n_\gamma(t) p_\gamma(t) \big(\Delta p_\gamma(t)+G(p_\gamma(t))\big)^2 \nonumber \\
  & \quad  -  \frac{3}{2} \int_{\mathbb{R}^N} n_\gamma (t) |\na p_\gamma(t)|^2\big(\Delta p_\gamma(t) +G(p_\gamma(t))\big).
\end{align*}
In this form, we recognize an energy equality, where the energy involves the usual kinetic energy and a potential energy.
But  the right hand side cannot be made negative using the Cauchy-Schwarz inequality, neither directly controled.
However, after some integrations by parts it can be rewritten as
\begin{equation} \label{eq:energybis}
\begin{array}{l}
 \displaystyle \frac{d}{dt}\displaystyle \int_{\mathbb{R}^N} \left(n_\gamma(t)\, \frac{|\na p_\gamma(t)|^2}{2}  -  H^{({\frac 1 \gamma})}(p_\gamma(t))\right)
\\[10pt]
\quad\displaystyle = - \frac{\gamma^2}{1+\gamma}    \int_{\mathbb{R}^N}   n_\gamma(t) p_\gamma(t) \big(\Delta p_\gamma(t)+G(p_\gamma(t))\big)^2
-\frac{\gamma}{1+\gamma} \sum_{i,j} \int_{\mathbb{R}^N}   n_\gamma(t) p_\gamma(t) \big(\partial^2_{ij} p_\gamma(t)\big)^2
\\[10pt]
\quad\quad  +  \displaystyle \int_{\mathbb{R}^N} n_\gamma (t) |\na p_\gamma(t)|^2\left( \frac{1}{2} G(p_\gamma(t)) + \frac{2 \gamma}{\gamma +1} p_\gamma(t) G'(p_\gamma(t))  \right)\\[10pt] \quad\quad\displaystyle-\frac{\gamma}{1+\gamma}  \int_{\mathbb{R}^N}   n_\gamma(t) p_\gamma(t) (G(p_\gamma(t)))^2.
\end{array}\end{equation}
The advantage compared to the expression before is that, $p_\gamma(t)$ being bounded in $L^\infty$, it gives a self-contained control of the kinetic energy (locally in time) thanks to the Gronwall lemma, assuming the kinetic energy is initially bounded. As a consequence all the terms can be controlled as usual and, back to the initial energy dissipation form, we conclude an energy  control  on the term
\[
\frac{1}{2\gamma}  \int_0^T  \int_{\mathbb{R}^N} \frac{n_\gamma(t)}{ p_\gamma(t)} |\na p_\gamma(t)|^4  \leq C(T)
\]
which is exactly compatible with the  Lipschitz estimate on $p_\gamma$ near the free boundary.
\section{Further regularity of $w(t)$ and $\Omega(t)$}
\label{sect:regularity.w.Omega}
\setcounter{equation}{0}

In this section, we show that $w(t)$ given in~\eqref{eq:bw} solves an obstacle problem for all $t>0$,  and prove Theorem~\ref{thm:main2}.

\subsection{An obstacle problem formulation}
Since $p\in L^\infty_{\rm loc}((0,\infty);H^1(\mathbb{R}^N))$, see Lemma~\ref{lm:H1},  $ w \in C^{0,1}((0,\infty);H^1(\mathbb{R}^N))$.
We now show that for all $t>0$, the function  $w(t)$ is the unique solution of an appropriate obstacle problem.
\begin{theorem}\label{thm:obstacle}
Define the function
\begin{equation}\label{eq:FFF}
F(t) = \textrm{e}^{-G(0)t} - n^0 + \int_0^t \textrm{e}^{-G(0)s} \big(G(0)-G(p(s))\big)\, ds, \quad t>0.
\end{equation}
Then $F\in C([0,\infty); L^\infty(\mathbb{R}^N))$
and, for all $t> 0$,
$w (t)$ is the unique function in $H^1(\mathbb{R}^N)$ satisfying
\begin{equation}\label{eq:obstacle0}
w(t)\geq 0, \qquad -\Delta w(t)+ F(t)\geq 0,\qquad
w(t)( -\Delta w(t)+ F(t)) = 0.
\end{equation}
In particular $  \Delta w(t) = F(t)\chi_{\{w(t)>0\}}$.
It is also the unique solution of the obstacle problem
\begin{equation}\label{eq:obstacle1}
u\in K=\{v\in H^1(\mathbb{R}^N)\,;\, v\geq 0\}, \quad J_t(u) =\inf_{v\in K} J_t(v), \quad J_t(v)=\int_{\mathbb{R}^N} \left(\frac{1}{2} |\na v|^2  + v F(t)\right).
\end{equation}
\end{theorem}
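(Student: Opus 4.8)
The plan is to first establish the regularity statement $F \in C([0,\infty);L^\infty(\mathbb{R}^N))$, and then prove the three equivalent characterizations of $w(t)$ in the order: (a) $w(t)$ solves the obstacle minimization problem \eqref{eq:obstacle1}; (b) the minimizer satisfies the variational inequalities \eqref{eq:obstacle0}; (c) uniqueness. For the regularity of $F$, I would note that $F(t) = \tilde F(t)$ (the quantity from \eqref{eq:Deltabw}) rewritten using $n(t) = 1$ on $\widetilde\Omega(t)$ and $n(t) = \mathrm{e}^{G(0)t}n^0$ off $\widetilde\Omega(t)$ from Proposition~\ref{prop:n}; more directly, from the definition \eqref{eq:FFF} the bounds $0 \le \mathrm{e}^{-G(0)t} \le 1$, $0 \le G(p(s)) \le G(0)$ and $0 \le n^0 \le 1$ give $\|F(t)\|_{L^\infty} \le 2$ uniformly, while continuity in $t$ with values in $L^\infty$ follows since $t \mapsto \mathrm{e}^{-G(0)t}$ is smooth, $n^0$ is fixed, and $s \mapsto \mathrm{e}^{-G(0)s}(G(0)-G(p(s)))$ is bounded, so its integral is Lipschitz in $t$ uniformly in $x$.

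Next I would treat \eqref{eq:obstacle1}. Since $F(t) \in L^\infty(\mathbb{R}^N)$ but need not decay, some care is needed for coercivity of $J_t$ on $K$; however $w(t) \in H^1(\mathbb{R}^N)$ is already known (Corollary~\ref{cor:preg} gives $w(t) \in W^{2,p}$ for all $p$, hence in $H^1$ after noting $w(t) \in L^1 \cap L^\infty$ and $\nabla w(t) \in L^2$), and one can minimize over the closed convex set $K \cap \{v : \|v\|_{H^1} \le M\}$ for $M$ large; alternatively, restrict attention to competitors supported where it matters. The cleanest route is to verify directly that $w(t)$ itself satisfies \eqref{eq:obstacle0}: we have $w(t) \ge 0$ since $p \ge 0$; from \eqref{eq:Deltabw}, $\Delta w(t) = \tilde F(t) = F(t)$ on $\widetilde\Omega(t) = \{w(t)>0\}$ (using $n(t)=1$ there and $p(s)=0$ for $s<t$ outside $\widetilde\Omega(t)$ is not needed here, only inside); and on $\{w(t)=0\}$, since $w(t) \in W^{2,p}$ attains its minimum there, $\Delta w(t) = 0$ a.e. on that set, so $-\Delta w(t) + F(t) = F(t)$ there, and I must check $F(t) \ge 0$ a.e. on $\{w(t)=0\}$. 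This last point is the crux: on $\mathbb{R}^N \setminus \widetilde\Omega(t)$ we have $p(s) = 0$ for all $s \le t$ (Lemma~\ref{lem:omegap}), so $G(p(s)) = G(0)$, the integral term in \eqref{eq:FFF} vanishes, and $F(t) = \mathrm{e}^{-G(0)t} - n^0$; then $F(t) \ge 0$ a.e. on $\{w(t)=0\}$ is equivalent to $n^0 \le \mathrm{e}^{-G(0)t}$ a.e. there, which follows because $n(t) = \mathrm{e}^{G(0)t} n^0 \le 1$ a.e. on $\mathbb{R}^N \setminus \widetilde\Omega(t)$ by Proposition~\ref{prop:n} combined with the Hele-Shaw graph constraint $n \le 1$. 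This establishes \eqref{eq:obstacle0}, and the identity $\Delta w(t) = F(t)\chi_{\{w(t)>0\}}$ is then immediate.

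Finally, for uniqueness and the equivalence with \eqref{eq:obstacle1}: a function $u \in H^1$ satisfying \eqref{eq:obstacle0} is a critical point of $J_t$ over $K$ in the sense that $\int \nabla u \cdot \nabla(v-u) + \int F(t)(v-u) \ge 0$ for all $v \in K$ (test $-\Delta u + F(t) \ge 0$, which is a nonnegative measure, against $v - u \ge -u$, using $u(-\Delta u + F(t)) = 0$); since $J_t$ is convex, this variational inequality characterizes the minimizer, and strict convexity of $v \mapsto \int \frac12|\nabla v|^2$ modulo constants together with the $H^1$ constraint forces uniqueness. The standard argument: if $u_1, u_2$ both satisfy \eqref{eq:obstacle0}, subtract the two variational inequalities (testing each against the other) to get $\int |\nabla(u_1 - u_2)|^2 \le 0$, hence $u_1 - u_2$ is constant, and being in $H^1(\mathbb{R}^N)$ with $N \ge 1$ (or more simply, in $L^1 \cap L^\infty$ via the comparison with $w$) it must be zero. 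The main obstacle I anticipate is the coercivity/well-posedness of $J_t$ on $K$ given that $F(t)$ does not decay at infinity — but this is sidestepped by working with the variational inequality characterization and the already-established fact that $w(t) \in H^1(\mathbb{R}^N)$ is a solution, so existence is free and only uniqueness needs the energy argument, which is robust.
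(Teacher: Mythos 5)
Your proposal is correct in substance and follows the same overall strategy as the paper (compare $\Delta w(t)=\tilde F(t)$ from \eqref{eq:Deltabw} with $F(t)$, establish the complementarity, then get uniqueness through the variational inequality $\int \na w\cdot\na(v-w)+\int F(t)(v-w)\ge 0$ for $v\in K$), but you handle the supersolution inequality differently. The paper gets $-\Delta w(t)+F(t)\ge 0$ \emph{globally} in one line: since $n\le 1$ and $G(0)\ge G(p)$, every term of $\tilde F(t)$ in \eqref{eq:Deltabw} is dominated by the corresponding term of $F(t)$, so $\Delta w(t)=\tilde F(t)\le F(t)$ a.e. You instead split into $\{w(t)>0\}$ (where you prove equality) and $\{w(t)=0\}$, where you use $\Delta w(t)=0$ a.e.\ (admissible, since $w(t)\in W^{2,p}$; the paper uses the same fact in Proposition~\ref{prop:n}) together with $F(t)=\mathrm{e}^{-G(0)t}-n^0\ge 0$ there, the sign coming from $n(t)=\mathrm{e}^{G(0)t}n^0\le 1$ a.e.\ off $\widetilde\Omega(t)$. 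Your route is valid but slightly heavier: it needs Lemma~\ref{lem:omegap} and Proposition~\ref{prop:n}, whereas the paper's one-line domination argument needs only $n\le1$; on the other hand your argument makes explicit where the constraint $\mathrm{e}^{G(0)t}n^0\le1$ enters, which is the same quantity that later drives the non-degeneracy condition \eqref{eq:ndg}. The uniqueness argument (subtract the two variational inequalities, conclude $\na(u_1-u_2)=0$, hence $u_1=u_2$ in $H^1(\mathbb{R}^N)$) is exactly the paper's sketch, and your remark that coercivity of $J_t$ is irrelevant because existence is already furnished by $w(t)$ is consistent with how the paper treats \eqref{eq:obstacle1}.

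One step is under-justified as written: on $\widetilde\Omega(t)$ you claim $\tilde F(t)=F(t)$ ``using $n(t)=1$ there.'' That only matches the first terms; the time integrals in \eqref{eq:Deltabw} and \eqref{eq:FFF} involve $n(x,s)$ for $s<t$, and for $x\in\widetilde\Omega(t)$ one need not have $n(x,s)=1$ at early times. The missing (one-line) observation, which is how the paper argues, is that
$n(x,s)\big(G(0)-G(p(x,s))\big)=G(0)-G(p(x,s))$ holds for \emph{every} $(x,s)$: either $p(x,s)=0$ and both sides vanish, or $p(x,s)>0$ and then $n(x,s)=1$. With that inserted, your proof of the equality on $\{w(t)>0\}$, and hence of \eqref{eq:obstacle0}, is complete.
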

\begin{proof}
First of all, the fact that $n^0\in L^\infty(\mathbb{R}^N)$ and the inequality $|G(0)-G(p)|\leq C p\leq Cp_M$ clearly give $F\in C([0,\infty); L^\infty(\mathbb{R}^N))$.

Next, it is obvious that $ w(t)\geq 0$ in $\mathbb{R}^N$, and \eqref{eq:Deltabw} together with the fact that $n(t)\leq 1$ in $\mathbb{R}^N$ give  $\tilde F\le F$ (see \eqref{eq:Deltabw}), and hence
$$
\Delta  w(t)  \leq F(t).
$$
It  remains to show that equality holds in the open set $\widetilde\Omega(t)=\{ w (t)>0\}$.

Using \eqref{eq:Deltabw}, and  since we already know (Proposition \ref{prop:n}) that $n=1$ a.e. in $\widetilde\Omega(t)$, it is enough to show that $F(t)=\tilde F(t)$ a.e. in $\widetilde\Omega(t)$, i.e.,
$$
\int_0^t \textrm{e}^{-G(0)s} n(s)\big(G(0)-G(p(s))\big)\, ds = \int_0^t \textrm{e}^{-G(0)s} \big(G(0)-G(p(s))\big)\, ds\quad\text{a.e.~in }\widetilde\Omega(t).
$$
In fact, it is readily seen that this equality holds for all $x\in \mathbb{R}^N$.
Indeed, we have
$$
n(x,s)\big(G(0)-G(p(x,s))\big) =  G(0)-G(p(x,s))\quad\text{for all }x\in\mathbb{R}^N,
$$
since either $p(x,s)=0$, and then both sides of this equality are zero, or $p(x,s)>0$, in which case $n(x,s)=1$ and we have equality as well.

The uniqueness of the solution of \eqref{eq:obstacle0} and the relation to the classical obstacle problem formulation \eqref{eq:obstacle1} can now be shown easily, for instance by first proving that $w(t)$ is the unique solution of the variational inequality
$$
w\in K, \qquad
\displaystyle \int_{\mathbb{R}^N} \na w \cdot\na (v-w) + \int_{\mathbb{R}^N} F(t) (v-w) \geq 0 \qquad \forall v\in K.
$$

\end{proof}


\subsection{Further regularity of $w(t)$}

Theorem \ref{thm:obstacle} allows us to improve the result of Corollary
\ref{cor:preg}.
Indeed, it is well known that the optimal regularity for the obstacle problem is $C^{1,1}$; see \cite{ALS,B,C}.
However, to prove this we need a little bit better than $F(t)\in L^\infty(\mathbb{R}^N)$.
\begin{proposition}[\cite{ALS,B}]\label{eq:optreg}
Let $v\in H^1(\mathbb{R}^N)$ be the solution of the obstacle problem
$$
\Delta   v = f \chi_{ \{v >0\}}.
$$
If the function $f$ is Dini  continuous\footnote{We recall that a function is Dini continuous if it has a modulus of continuity $\sigma(r)$ such that $\int_0^1 \frac{\sigma(r)}{r}\, dr <\infty$.} in $B_r(x_0)$ for some $x_0$ and $r>0$, then $v\in C^{1,1}(B_{r/2}(x_0))$.
\end{proposition}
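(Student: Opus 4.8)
The plan is to reduce the statement to the now-classical interior $C^{1,1}$ regularity theory for the obstacle problem with Dini-continuous right-hand side, as developed in \cite{ALS,B}, and to indicate which pieces of that theory do the work. The starting point is that $v$ solves $\Delta v = f\chi_{\{v>0\}}$ with $v\ge 0$, so $v$ is a nonnegative supersolution (since $f\ge 0$ is \emph{not} assumed, one first notes that what is used is only the equation and the sign of $v$, together with the fact that $\Delta v$ is bounded, hence $v\in W^{2,q}_{\rm loc}$ for every $q<\infty$ and in particular $v\in C^{1,\beta}_{\rm loc}$ for every $\beta<1$). This already gives that the coincidence set $\{v=0\}$ and the free boundary $\partial\{v>0\}$ are well defined, and that $\nabla v=0$ on $\{v=0\}$, so the relevant scaling quantity at a free boundary point $x_0$ is $\sup_{B_r(x_0)}|v|$, which a priori is only $O(r^{1+\beta})$.

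The first key step is the quadratic growth (nondegeneracy-type upper bound) estimate: for $x_0\in B_{r/2}(x_0)\cap\partial\{v>0\}$ one shows
\[
\sup_{B_\rho(x_0)}|v|\le C\rho^2\Bigl(1+\int_0^{\rho}\frac{\sigma(s)}{s}\,ds\Bigr),
\]
where $\sigma$ is the Dini modulus of $f$. This is obtained by comparison with the solution of $\Delta h = f(x_0)$ on $B_\rho(x_0)$ plus a Dini-controlled perturbation, exactly as in \cite{ALS}. The second key step is to upgrade this growth bound at free boundary points into a genuine $C^{1,1}$ bound at \emph{all} points of $B_{r/2}(x_0)$: one distinguishes points in the open set $\{v>0\}$, where interior elliptic estimates for $\Delta v=f$ with $f$ Dini continuous give pointwise control of $D^2 v$ in terms of the distance to the free boundary and the growth bound above, from points in the interior of $\{v=0\}$, where $D^2v=0$, from free boundary points, handled by the growth estimate directly. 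Combining these three cases with a standard interpolation/covering argument yields $\|D^2 v\|_{L^\infty(B_{r/2}(x_0))}\le C$, i.e. $v\in C^{1,1}(B_{r/2}(x_0))$.

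The main obstacle is the matching of scales across the free boundary: near a free boundary point the second derivatives of $v$ need not be continuous, so the argument cannot be purely local-elliptic, and one must carefully combine the one-sided quadratic bound on $|v|$ with the interior Schauder-type estimate $|D^2v(x)|\le C\bigl(\fint_{B_{d(x)}(x)}|v|/d(x)^2 + \sigma(d(x)) + \|f\|_\infty\bigr)$, where $d(x)={\rm dist}(x,\partial\{v>0\})$, and then sum the contributions along a chain of balls shrinking toward the free boundary; the Dini condition $\int_0^1 \sigma(s)/s\,ds<\infty$ is exactly what makes this sum converge. Since all of this is carried out in \cite{ALS,B} (and \cite{B,C} for the case of continuous $f$), the proof here amounts to verifying that our $v$ fits their hypotheses — nonnegativity, the distributional equation $\Delta v=f\chi_{\{v>0\}}$, and Dini continuity of $f$ on $B_r(x_0)$ — and invoking their result on $B_{r/2}(x_0)$; we therefore omit the details and simply cite \cite{ALS,B}.
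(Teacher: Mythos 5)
Your proposal is correct and matches the paper's treatment: the paper does not prove this proposition either, but states it as a known result quoted from \cite{ALS,B}, exactly the references you invoke after checking that the hypotheses (the distributional equation $\Delta v=f\chi_{\{v>0\}}$ and Dini continuity of $f$ on $B_r(x_0)$) are met. Your additional sketch of the ingredients inside those references (quadratic growth at free boundary points controlled by $\int_0^\rho\sigma(s)/s\,ds$, interior estimates away from the free boundary, and the Dini condition ensuring convergence of the dyadic summation) is accurate and consistent with the cited literature.
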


Instead of the Dini continuity, we can require that the solution of $\Delta \vphi=f$ be $C^{1,1}$; see~\cite{C}.

In order to use this result, we first need to establish further regularity for the function $F(t)$, which we do now, using the bound~\eqref{eq:alpha1HS} with $\alpha =1$.
\begin{lemma}\label{lem:Freg}
The function $F$ defined in \eqref{eq:FFF} satisfies
$$
\int_{\mathbb{R}^N\setminus \supp (n^0)} |\na F(t)|^4  \leq C(T)\quad\text{for all }t\in[0,T].
$$

In particular, if $N\leq 3$ then $F(t)\in C^\alpha(\mathbb{R}^N\setminus \supp n^0)$, $\alpha=1-(N/4)$, and
$$
\|F(t)\|_{C^\alpha(\mathbb{R}^N\setminus \supp n^0)} \leq C(T) \quad\text{for all }t\in [0,T].
$$
\end{lemma}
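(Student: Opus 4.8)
The plan is to differentiate the defining formula~\eqref{eq:FFF} for $F(t)$ in space and estimate the resulting integral using the sharp gradient bound~\eqref{eq:alpha1HS} with $\alpha=1$. First I would note that outside $\supp(n^0)$ the term $-n^0$ drops out, so $\na F(t) = -\int_0^t \textrm{e}^{-G(0)s}\, G'(p(s))\,\na p(s)\, ds$ in $\mathbb{R}^N\setminus\supp(n^0)$ (this identity being understood in the appropriate weak sense, and legitimate because $w\in C^{0,1}((0,\infty);H^1(\mathbb{R}^N))$ by Lemma~\ref{lm:H1}, so $p(s)\in H^1$ for a.e.~$s$ and the time-integrated quantity lies in $H^1$ as well). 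Since $G\in C^1([0,\infty))$ by~\eqref{eq:G} and $0\le p\le p_M$, the factor $G'(p(s))$ is bounded, $|G'(p(s))|\le \|G'\|_{L^\infty([0,p_M])}=:L$, and $\textrm{e}^{-G(0)s}\le 1$ for $s\ge0$. Hence pointwise $|\na F(x,t)|\le L\int_0^t |\na p(x,s)|\,ds$.

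Next I would apply Minkowski's inequality for integrals (in the $L^4_x$ norm) and then Hölder in the time variable on $(0,t)\subset(0,T)$:
\begin{equation*}
\|\na F(t)\|_{L^4(\mathbb{R}^N\setminus\supp n^0)} \le L\int_0^t \|\na p(s)\|_{L^4(\mathbb{R}^N)}\,ds \le L\, t^{3/4}\left(\int_0^T \|\na p(s)\|_{L^4(\mathbb{R}^N)}^4\, ds\right)^{1/4}.
\end{equation*}
The bracketed quantity is exactly $\int_0^T\int_{\mathbb{R}^N} |\na p|^4$, which is $\int_0^T\int_{\mathbb{R}^N} p^{\alpha-1}|\na p|^4$ with $\alpha=1$, and this is bounded by $C(T,1)$ thanks to~\eqref{eq:alpha1HS}. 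Raising to the fourth power gives $\int_{\mathbb{R}^N\setminus\supp(n^0)} |\na F(t)|^4 \le L^4 t^3\, C(T,1) \le C(T)$ for all $t\in[0,T]$, as claimed.

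For the second part, I would invoke Morrey's inequality: when $N\le 3$ we have $4>N$, so $W^{1,4}$ of a (nice) domain embeds into $C^{0,\alpha}$ with $\alpha=1-N/4$. Applying this on $\mathbb{R}^N\setminus\supp(n^0)$ (one should either assume, or reduce to, the case where this open set has a sufficiently regular boundary so that the embedding constant is uniform — in applications $\supp(n^0)$ is compact, so working on bounded balls away from it and using interior Morrey estimates suffices, or one simply states the Hölder bound locally), the bound on $\|F(t)\|_{W^{1,4}}$ just obtained, together with the uniform $L^\infty$ bound $\|F(t)\|_{L^\infty}\le 1+TG(0)$ from the proof of Corollary~\ref{cor:preg} (or directly from~\eqref{eq:FFF}), yields $\|F(t)\|_{C^\alpha(\mathbb{R}^N\setminus\supp n^0)}\le C(T)$ uniformly on $[0,T]$. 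The main obstacle I anticipate is purely technical: justifying that spatial differentiation commutes with the time integral in the formula for $F$ (i.e.~that $\na F(t)=\int_0^t \textrm{e}^{-G(0)s}\na(G(0)-G(p(s)))\,ds$ weakly), and dealing with the geometry of the domain $\mathbb{R}^N\setminus\supp(n^0)$ in the Sobolev embedding; both are handled by approximation and by the fact that all estimates are local away from $\supp(n^0)$, where $F(t)$ and $w(t)$ are as smooth as $\na p$ allows.
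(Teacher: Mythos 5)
Your proposal is correct and follows essentially the same route as the paper: differentiate \eqref{eq:FFF} away from $\supp(n^0)$, bound $G'(p)$ by a constant, apply H\"older in time, and invoke \eqref{eq:alpha1HS} with $\alpha=1$, then conclude H\"older continuity for $N\le 3$ by Morrey/Sobolev embedding. The only cosmetic difference is that you use Minkowski's integral inequality in $L^4_x$ before H\"older in time, whereas the paper applies H\"older pointwise in $x$ and then integrates in space; the two computations are equivalent and yield the same bound.
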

It will be clear below that $F$ cannot be more regular than $n^0$, which is typically discontinuous (characteristic function of a set).
This explains why the estimate is restricted to the set $\mathbb{R}^N\setminus \supp(n^0)$.
However, this restriction can be lifted if $n^0$ is assumed to be more regular.

\begin{proof}
Differentiating \eqref{eq:FFF}, we get
$$
\na F(t) =  - \int_0^t \textrm{e}^{-G(0)s} G'(p(s)) \na p(s)\, ds\quad\text{in }
\mathbb{R}^N\setminus \supp n^0.
$$
In view of \eqref{eq:G}, there exists a constant $C$ such that $|G'(p)|\leq C$ for $p\in [0,p_M]$, and so
\begin{align*}
|\na F(t)| & \leq   C \int_0^t \textrm{e}^{-G(0)s} |\na p(s)|\, ds \\
& \leq  C \left(\int_0^t \textrm{e}^{-\frac{4}{3} G(0)s}\, ds\right)^{3/4} \left( \int_0^t |\na p(s)|^4\, ds\right)^{1/4}\quad\text{in }
\mathbb{R}^N\setminus \supp n^0.
\end{align*}
Hence,
$$
\int_{\mathbb{R}^N\setminus \supp (n^0)} |\na F(t)|^4  \leq C \left(\int_0^t \textrm{e}^{-\frac{4}{3} G(0)s}\, ds\right)^{3}  \int_0^t\int_{\mathbb{R}^N\setminus \supp (n^0)} |\na p|^4.
$$
Therefore, for a different constant $C$,
$$
\int_{\mathbb{R}^N\setminus \supp (n^0)} |\na F(t)|^4 \leq C \int_0^T\int_{\mathbb{R}^N}  |\na p|^4\quad\text{for all }t\leq T,
$$
which combined with \eqref{eq:alpha1HS} with $\alpha=1$ yields the result.
\end{proof}

Using Lemma \ref{lem:Freg}, we can now apply Proposition \ref{eq:optreg} to get the optimal regularity for $w(t)$.
\begin{corollary}
The function $w(t)$ belongs to $C^{1,1}(\mathbb{R}^N\setminus \supp n^0)$  for all $t>0$.
\end{corollary}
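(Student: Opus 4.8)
The plan is to combine the obstacle problem formulation of $w(t)$ from Theorem~\ref{thm:obstacle} with the optimal regularity result for the obstacle problem stated in Proposition~\ref{eq:optreg}. The statement to be proved is that $w(t)\in C^{1,1}(\mathbb{R}^N\setminus\supp n^0)$ for all $t>0$. Recall that Theorem~\ref{thm:obstacle} tells us precisely that $w(t)$ solves
$$
\Delta w(t) = F(t)\,\chi_{\{w(t)>0\}},
$$
with $F$ as in \eqref{eq:FFF}. Proposition~\ref{eq:optreg} then says that if $F(t)$ is Dini continuous in a ball $B_r(x_0)$, then $w(t)\in C^{1,1}(B_{r/2}(x_0))$.

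First I would fix $t>0$ and an arbitrary point $x_0\in\mathbb{R}^N\setminus\supp n^0$. Since $\supp n^0$ is closed, there is some $r>0$ with $B_r(x_0)\subset\mathbb{R}^N\setminus\supp n^0$. By Lemma~\ref{lem:Freg}, we have $F(t)\in W^{1,4}(B_r(x_0))$, with a bound uniform on $[0,T]$; in particular, when $N\le 3$, Lemma~\ref{lem:Freg} gives directly that $F(t)\in C^\alpha(B_r(x_0))$ with $\alpha=1-N/4>0$. Any Hölder continuous function is Dini continuous (its modulus of continuity $\sigma(\rho)=C\rho^\alpha$ satisfies $\int_0^1\sigma(\rho)/\rho\,d\rho = C/\alpha<\infty$), so the hypothesis of Proposition~\ref{eq:optreg} is met on $B_r(x_0)$.

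Applying Proposition~\ref{eq:optreg}, we conclude $w(t)\in C^{1,1}(B_{r/2}(x_0))$. Since $x_0$ was an arbitrary point of the open set $\mathbb{R}^N\setminus\supp n^0$, this local regularity at every point yields $w(t)\in C^{1,1}_{\rm loc}(\mathbb{R}^N\setminus\supp n^0)$, which is the claimed statement.

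There is no real obstacle here: the corollary is essentially a bookkeeping step that assembles Theorem~\ref{thm:obstacle} (the obstacle problem structure), Lemma~\ref{lem:Freg} (Hölder regularity of $F(t)$ away from $\supp n^0$, itself built on the new $W^{1,4}$ estimate \eqref{eq:alpha1HS} with $\alpha=1$), and the classical optimal-regularity theorem for the obstacle problem. The only point requiring a word of care is the dimensional restriction $N\le 3$, which is exactly what makes the exponent $\alpha=1-N/4$ in Lemma~\ref{lem:Freg} positive and hence the Hölder (therefore Dini) continuity of $F(t)$ available; in higher dimensions one would need better integrability of $\nabla p$ than the $L^4$ bound currently at our disposal, and the conclusion is not claimed there.
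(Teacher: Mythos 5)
Your proposal is correct and follows exactly the paper's route: the corollary is obtained by combining the obstacle problem formulation $\Delta w(t)=F(t)\chi_{\{w(t)>0\}}$ from Theorem~\ref{thm:obstacle}, the H\"older (hence Dini) continuity of $F(t)$ away from $\supp n^0$ given by Lemma~\ref{lem:Freg}, and the optimal $C^{1,1}$ regularity of Proposition~\ref{eq:optreg}, localized around an arbitrary point of the open set $\mathbb{R}^N\setminus\supp n^0$. Your remark that the dimensional restriction $N\le 3$ is what makes $\alpha=1-N/4>0$, and therefore the Dini hypothesis available, is also consistent with the paper's (implicit) standing assumption in this part of the argument.
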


\subsection{The free boundary $\pa \Omega(t)$ - Proof of Theorem \ref{thm:main2}}
Finally, Theorem \ref{thm:obstacle} can also  be used to investigate the properties of the interface
$\pa \Omega(t)$,  and to prove Theorem \ref{thm:main2}.

First, we note that the set $\widetilde \Omega(t)=\{w(t)>0\}$ is the non-coincidence set
for an obstacle problem.
Classically, this implies some regularity for the free boundary $\pa \widetilde \Omega(t)$
in the neighborhood of a point $x\in\partial\Omega(t)$,
provided that $F$ satisfies the non-degeneracy condition $F(t)>0$ in a neighborhood of $x$.

Since $G(0)\ge G(p)$ for all $p\ge 0$, we have
$$
F(t) \geq \textrm{e}^{-G(0)t} - n^0 = \textrm{e}^{-G(0)t}\big(1-n^0\textrm{e}^{G(0)t}\big).
$$
So the non-degeneracy condition is satisfied at $x$ as soon as
$$
n^0 \textrm{e}^{G(0)t}<1 \quad\text{in a neighborhood of } x,
$$
a condition which is in particular satisfied outside the support of $n^0$.

On the other hand, \eqref{eq:Omegabw} together with
condition \eqref{eq:ndg} imply that
$$
\widetilde\Omega(t) \cap B_\delta(x) = \Omega(t) \cap B_\delta(x).
$$
Hence, Theorem~\ref{thm:main2} will follow from the results proved in \cite{C0} for the solution of $\Delta v=f\chi_{\{v>0\}}$. In that paper $f$ is asked to be a positive Lipschitz function. However, as noted in \cite{B}, all that is actually needed for the proof, once $f>0$,  is that $f\in W^{1,p}(\mathbb{R}^N)$ for some $p>N$. This regularity follows from
Lemma \ref{lem:Freg} when $N\le3$.

\section{Hele-Shaw free boundary condition}
\label{sect:velocity.free.boundary}
\setcounter{equation}{0}

The goal of this section it to prove
Theorem \ref{thm:main3}, which gives the velocity of the free boundary in terms of the gradient of the pressure and the initial data. We will prove that the free boundary condition~\eqref{eq:Stefan.condition} is satisfied both in a weak (distributional) sense and in a measure theoretical sense. The starting point is to write equation~\eqref{eq:main} in an equivalent form in which the free boundary condition becomes apparent.

\noindent\emph{Remark. } As mentioned in the introduction, it is also possible to
show that the free boundary condition holds in a viscosity sense, see  \cite{KP}.

\subsection{An equivalent weak formulation}

The key point is that, thanks to Theorem~\ref{thm:main1}, we have
\begin{equation}
\label{eq:characterization.n}
n(t) = \chi_{\Omega(t)} + \textrm{e}^{G(0)t} n^0 (1-\chi_{\Omega(t)}) \quad  \text{in }Q;
\end{equation}
(see formula~\eqref{eq:defnb}).
Hence, on the one hand  we have, using that $p(t)=0$ in $\RR^N\setminus\Omega(t)$,
$$
n(t) G(p(t)) =  G(p(t))\chi_{\Omega(t)} + G(0) \textrm{e}^{G(0)t} n^0 (1-\chi_{\Omega(t)}) \quad \text{in }Q,
$$
and on the other hand,
$$
\pa_t n(t) = (1- \textrm{e}^{G(0)t} n^0 ) \pa_t \chi_{\Omega(t)} + G(0) \textrm{e}^{G(0)t} n^0 (1-\chi_{\Omega(t)}) \quad \text{in } \mathcal D'(Q).
$$
Combining these two equations with  \eqref{eq:main}, we obtain
\eqref{eq:hsweak0}:
\begin{equation}\label{eq:hsweak}
 (1- \textrm{e}^{G(0)t} n^0 )  \pa_t \chi_{\Omega(t)}   = \Delta p(t) + G(p(t))\chi_{\Omega(t)} \quad \text{in } \mathcal D'(Q).
\end{equation}
Formally at least, this equation
is a weak formulation of the following Hele-Shaw free boundary problem
\begin{equation}
\label{eq:new.weak.formulation}
\displaystyle\Delta p(t) + G(p(t)) = 0\quad\text{in } \Omega(t)=\{p(t)>0\}, \qquad
\displaystyle\pa_t p (t)=  \frac{|\na p(t)|^2}{1-\textrm{e}^{G(0)t}n^0}  \quad\text{on } \pa \Omega(t).
\end{equation}
The first equation is obvious, and in the rest of this section, we will show that \eqref{eq:hsweak} indeed implies that the free boundary condition holds in two different senses (weak and measure theoretic).

\subsection{Weak formulation of Hele-Shaw free boundary condition}

We first check that weak solutions to~\eqref{eq:hsweak} satisfy the free boundary condition in~\eqref{eq:new.weak.formulation} in a weak sense.
\begin{proposition}
Assume that $p$  solves \eqref{eq:hsweak} and is continuous (e. g. for $N\leq 3$). Then for all test functions $\vphi\in\mathcal D(Q)$, we have:
\begin{equation}\label{eq:beta}
 \lim_{\epsilon, \delta\to 0^+ }  \int_0^\infty  \frac 1 \epsilon \left(\int_{\{ \delta < p(t) < \delta + \epsilon\}} \left( \left(1-\mbox{\rm e}^{G(0)t}n^0\right){\pa_t p(t)} - |\na p(t)|^2 \right) \vphi(t)\right)\, dt = 0.
\end{equation}
\end{proposition}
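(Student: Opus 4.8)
The plan is to exploit the equation \eqref{eq:hsweak} directly, testing it against a function concentrated on the level set $\{\delta<p(t)<\delta+\epsilon\}$, and then to pass to the limit $\epsilon\to0$ first, and $\delta\to0$ afterwards. Concretely, for fixed $\delta>0$ and $\epsilon>0$ introduce a Lipschitz cutoff $\beta_{\delta,\epsilon}(s)$ of the pressure variable $s$, equal to $0$ for $s\le\delta$, equal to $1$ for $s\ge\delta+\epsilon$, and affine with slope $1/\epsilon$ in between; thus $\beta_{\delta,\epsilon}'(s)=\tfrac1\epsilon\chi_{\{\delta<s<\delta+\epsilon\}}$. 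Given $\vphi\in\mathcal D(Q)$ with $\vphi\ge0$ (the general case by linearity), use $\beta_{\delta,\epsilon}(p(x,t))\,\vphi(x,t)$ as a test function in the distributional identity \eqref{eq:hsweak}. This is admissible since $p$ is continuous and $p\in L^2_{\rm loc}((0,\infty);W^{1,4})$ by Proposition~\ref{prop:alpha-HS}, so the composition is Lipschitz in $x$ and the needed integrations by parts make sense.

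The second step is to identify each term as $\epsilon\to0$. On the right-hand side, $\int_Q(\Delta p+G(p)\chi_{\Omega(t)})\,\beta_{\delta,\epsilon}(p)\vphi$: integrating the Laplacian by parts moves one derivative onto $\beta_{\delta,\epsilon}(p)\vphi$, producing the crucial bulk term $-\tfrac1\epsilon\int_{\{\delta<p<\delta+\epsilon\}}|\na p|^2\vphi$ from $\na(\beta_{\delta,\epsilon}(p))=\beta_{\delta,\epsilon}'(p)\na p$, plus a remainder $-\int_Q\beta_{\delta,\epsilon}(p)\na p\cdot\na\vphi$ which converges to $-\int_{\{p>\delta\}}\na p\cdot\na\vphi$ and is harmless (it does not survive after one notes that, on $\{p>\delta\}$, $\beta_{\delta,\epsilon}(p)\to1$ and we may integrate back by parts, or simply keep it and send $\delta\to0$ at the end using continuity of $p$). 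The source term $\int_Q G(p)\chi_{\Omega(t)}\beta_{\delta,\epsilon}(p)\vphi$ converges by dominated convergence to $\int_{\{p>\delta\}}G(p)\vphi$, which cancels against the corresponding contribution coming from the elliptic equation $\Delta p+G(p)=0$ valid inside $\Omega(t)$ (Corollary~\ref{cor:pdelta}). On the left-hand side, $\pa_t\chi_{\Omega(t)}=\pa_t\chi_{\{p>0\}}$ should first be rewritten: since $p$ is continuous and nondecreasing in $t$, $\chi_{\{\delta<p(t)<\delta+\epsilon\}}$ and $\tfrac1\epsilon(\chi_{\{p(t)>\delta\}}-\chi_{\{p(t)>\delta+\epsilon\}})$ coincide; applying $\pa_t$ distributionally and pairing with the Lipschitz-in-time $(1-\mathrm e^{G(0)t}n^0)\vphi$ (using $w\in W^{1,\infty}_{\rm loc}$, hence enough regularity in time), one obtains in the limit $\epsilon\to0$ precisely $\tfrac1\epsilon\int_{\{\delta<p(t)<\delta+\epsilon\}}(1-\mathrm e^{G(0)t}n^0)\pa_t p(t)\,\vphi$ by the coarea/chain rule in $t$. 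Collecting the surviving terms yields, for each fixed $\delta>0$,
\begin{equation}\label{eq:betadelta}
\lim_{\epsilon\to0^+}\int_0^\infty\frac1\epsilon\left(\int_{\{\delta<p(t)<\delta+\epsilon\}}\left(\left(1-\mathrm e^{G(0)t}n^0\right)\pa_t p(t)-|\na p(t)|^2\right)\vphi(t)\right)dt = R(\delta),
\end{equation}
where $R(\delta)$ is an explicit error collecting the non-cancelled pieces (the $\na\vphi$ remainder and the mismatch on $\{0<p<\delta\}$ between $\Delta p+G(p)$ and $0$).

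The third and final step is to show $R(\delta)\to0$ as $\delta\to0$. The $\na\vphi$-type remainders vanish because, by continuity of $p$, $\chi_{\{p(t)>\delta\}}\to\chi_{\{p(t)>0\}}=\chi_{\Omega(t)}$ pointwise and boundedly, and then the two integrals $\int_{\Omega(t)}(\Delta p+G(p))\vphi$ and $\int_{\Omega(t)}\na p\cdot\na\vphi + G(p)\vphi$ reconstruct each other via integration by parts together with the complementarity relation $\Delta p(t)+G(p(t))=0$ in $\mathrm{Int}(\Omega(t))$; the boundary term is controlled since $p=0$ on $\pa\Omega(t)$. Here I would lean on the strong form of the complementarity relation \eqref{eq:complementary_strong} (or, more elementarily, $\int_{\RR^N}|\na p(t)|^2\le C(t)$ from \eqref{eq:H1limit}) to justify integrating by parts on the full space. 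Sending $\delta\to0^+$ in \eqref{eq:betadelta} then gives \eqref{eq:beta}, noting that the iterated limit $\lim_{\epsilon\to0}$ then $\lim_{\delta\to0}$ is exactly what the statement requests; if one wants the genuine joint limit, a uniform-in-$\delta$ estimate on the $\epsilon$-convergence (again from the $W^{1,4}$ bound, giving equi-integrability of $|\na p|^2$ over the thin level sets) upgrades it.

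The main obstacle I anticipate is the rigorous treatment of the time-derivative term: $\pa_t\chi_{\Omega(t)}$ is only a measure, and one must carefully justify that testing it against $\beta_{\delta,\epsilon}(p)\vphi$ and integrating by parts in $t$ produces $\int\beta_{\delta,\epsilon}'(p)\pa_t p\,(1-\mathrm e^{G(0)t}n^0)\vphi$ — this is a chain rule for $\pa_t$ applied to a composition with the only-continuous, monotone function $p(x,\cdot)$, and it needs the absolute continuity of $t\mapsto p(x,t)$ (or at least of $t\mapsto w(x,t)$) to hold for a.e.\ $x$, which should follow from $\pa_t p\ge0$ together with the local integrability $\pa_t p\in L^1_{\rm loc}$, itself a consequence of $p\in BV(Q_T)$ and \eqref{eq:patp}. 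Making the interchange of $\pa_t$ and the nonlinearity precise (e.g.\ by approximating $p$ by mollification in $t$ and using monotonicity to pass to the limit) is the delicate technical point; everything else is dominated convergence plus the elliptic identity already established.
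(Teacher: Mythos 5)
Your right-hand side analysis is essentially sound, but the core step of your plan — extracting the time-derivative term — does not work. You propose to plug $b_{\epsilon,\delta}(p)\vphi$ into \eqref{eq:hsweak}, where $b_{\epsilon,\delta}(s)$ is your cutoff ($0$ for $s\le\delta$, $1$ for $s\ge\delta+\epsilon$, affine in between), and to read off the first term of \eqref{eq:beta} from the pairing $\langle(1-\mathrm{e}^{G(0)t}n^0)\,\pa_t\chi_{\Omega(t)},\,b_{\epsilon,\delta}(p)\vphi\rangle$. That pairing is (for continuous $p$) identically zero and carries no information: since $t\mapsto p(x,t)$ is nondecreasing and continuous, the nonnegative measure $\pa_t\chi_{\Omega(t)}$ is carried by the points $(x,\tau(x))$ where $p$ first leaves $0$, and there $p=0$; your test function $b_{\epsilon,\delta}(p)$ vanishes on $\{p\le\delta\}$, hence on the support of this measure. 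So testing \eqref{eq:hsweak} against $b_{\epsilon,\delta}(p)\vphi$ merely reproduces the elliptic identity $\langle\Delta p+G(p)\chi_{\Omega(t)},\,b_{\epsilon,\delta}(p)\vphi\rangle=0$, already known from \eqref{eq:pdelta2}; it cannot produce the level-set integral of $(1-\mathrm{e}^{G(0)t}n^0)\pa_t p$. The surrounding statements confirm the confusion: $\chi_{\{\delta<p<\delta+\epsilon\}}$ equals $\chi_{\{p>\delta\}}-\chi_{\{p\ge\delta+\epsilon\}}$ \emph{without} the factor $\tfrac1\epsilon$, and an expression still depending on $\epsilon$ cannot be "the limit as $\epsilon\to0$".

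The correct manipulation is the reverse of yours, and is what the paper does: keep the smooth $\vphi$ as the test function for \eqref{eq:hsweak} and instead rewrite each term of \eqref{eq:beta} as a converging distributional pairing. By the chain rule, $\tfrac1\epsilon\int_{\{\delta<p<\delta+\epsilon\}}(1-\mathrm{e}^{G(0)t}n^0)\pa_t p\,\vphi=\iint_Q(1-\mathrm{e}^{G(0)t}n^0)\,\pa_t\big(b_{\epsilon,\delta}(p)\big)\vphi$ (interpreted after integrating by parts in $t$, since $\pa_t p$ is only a measure — this also disposes of the absolute-continuity worry you raise at the end), while $\tfrac1\epsilon\int_{\{\delta<p<\delta+\epsilon\}}|\na p|^2\vphi=\iint_Q\big(\Delta B_{\epsilon,\delta}(p)+G(p)\,b_{\epsilon,\delta}(p)\big)\vphi$ with $B_{\epsilon,\delta}(s)=\int_0^s b_{\epsilon,\delta}$, using $\Delta p+G(p)=0$ on the open set $\{p>\delta\}$. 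Since $b_{\epsilon,\delta}(p)\to\chi_{\{p>0\}}$ and $B_{\epsilon,\delta}(p)\to p$ in $L^1_{\rm loc}(Q)$ as $\epsilon,\delta\to0^+$, these pairings converge to $\langle(1-\mathrm{e}^{G(0)t}n^0)\pa_t\chi_{\{p>0\}},\vphi\rangle$ and $\langle\Delta p+G(p)\chi_{\{p>0\}},\vphi\rangle$ respectively, and their difference vanishes precisely by \eqref{eq:hsweak}. This yields the joint limit in $(\epsilon,\delta)$ directly, with no remainder $R(\delta)$ and no iterated-limit or equi-integrability argument needed.
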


Notice that  the first term of this formula makes sense only after integration by parts in time because $\pa_t p(t)$ is only defined as a nonnegative measure. With the notations below it should be written as 
\[- \int_0^\infty  \int \beta_{\epsilon, \delta}(t)  \partial_t \big( (1-e^{G(0)t} n^0)  \vphi(t) \big) dx dt 
\]
Using  that $p$ is smooth in $\overline {\{p(t)>\delta\}}$ for all $\delta>0$, from \eqref{eq:pdelta2}, assuming $p$ is continuous (e.g. in dimension less that 3), and using the coarea formula, we can then infer that
$$
 \lim_{\delta\to 0^+ }  \int_0^\infty \left(\int_{\{  p(t)= \delta \}} \left( \left(1-\textrm{e}^{ G(0)t}n^0\right)\frac{\pa_t p(t)} { |\na p(t)| }- |\na p(t)| \right) \vphi(t)\, d\mathcal H^{n-1}(x)\right)\,  dt = 0,
$$
which is the Hele-Shaw free boundary condition in the sense of distributions, namely (assuming $p$ is regular enough, for instance for $N\leq 3$ and large times as we prove it in  Section~\ref{sect:eventual.regularity})
$$
\int_0^\infty\left( \int_{\pa\Omega(t)} \left( \left(1-\textrm{e}^{G(0)t}n^0\right) \frac{\pa_t p(t)}{|\na p(t)|} - |\na p(t)|\right) \vphi(t) \, d\mathcal H^{n-1}(x)\right)\, dt  = 0.
$$

\begin{proof}
Let $b_{\epsilon, \delta}(s)$ be the piecewise linear approximation of $ \chi_{\mathbb{R}_+}(s)$ given by
$$
b_{\epsilon, \delta}(s) = \left\{
\begin{array}{ll}
0 \qquad & \text{for }  s \leq \delta,
\\
\frac {s-\delta}{\epsilon} \qquad & \text{for }  \delta \leq s \leq \delta + \epsilon,
\\ 1  \qquad & \text{for } s \geq \delta + \epsilon,
\end{array} \right.
$$
and  $\beta_{\epsilon, \delta}(s)=b'_{\epsilon, \delta}(s)$. Let us  denote $h(x,t)=\textrm{e}^{G(0)t}n^0(x)$.
Then, for any test function $\vphi\in\mathcal{D}(Q)$,
\begin{equation}
\label{eq:limt}
\displaystyle\iint_Q h \pa_t b_{\epsilon, \delta}(p)  \,\vphi= \iint_Q h\beta_{\epsilon, \delta}(p)\pa_t p\,  \vphi
= \int_0^\infty \frac 1 \epsilon\left( \int_{\{\delta < p(t) <\delta + \epsilon \}} h(t) {\pa_t p(t)} \,\vphi(t) \right) dt.
\end{equation}

Similarly, introducing $B_{\epsilon, \delta}(s) = \int_0^s b_{\epsilon, \delta}(r)\, dr$, we can write, using the fact that $\Delta p + G(p)=0$  in $\{p>\delta\}$ because $p$ is assumed to be continuous,
\begin{equation}
\label{eq:limd}
\begin{array}{rcl}
\displaystyle\iint_Q\big( \Delta(B_{\epsilon, \delta}(p)) + G(p)b_{\epsilon, \delta} (p) \big)\vphi & =&\displaystyle
\iint_Q \Big(\beta_{\epsilon, \delta}(p) |\na p|^2 +  b_{\epsilon, \delta}(p)\big(\Delta p+G(p)\big)\Big) \vphi \\[10pt]
& =& \displaystyle\iint_Q\beta_{\epsilon, \delta}(p) |\na p|^2  \vphi\\[10pt]
& =& \displaystyle\int_0^\infty \frac 1 \epsilon \left(\int_{\{\delta < p(t) <\delta + \epsilon \}}  |\na p(t)|^2  \vphi(t)\right)\, dt.
\end{array}
\end{equation}
Finally, we note that when $\delta \to 0$ and $\epsilon \to 0$,  we have (for any non-negative function $p$):
$$
b_{\epsilon, \delta}  (p) \to \chi_{\{p>0\}}, \qquad B_{\epsilon, \delta}  (p) \to p,
$$
where the convergence holds pointwise, and thus (by the Lebesgue dominated convergence theorem) in $L_{\rm loc}^1(Q)$.
In particular, we have
$$
\begin{array}{l}
\displaystyle
\left\langle h  \pa_t b_{\epsilon, \delta} (p), \vphi\right\rangle_{\mathcal D'(Q),\mathcal D(Q)} \to
\left\langle h  \pa_t \chi_{\{p>0\}}, \vphi\right\rangle_{\mathcal D'(Q),\mathcal D(Q)},
\\[10pt]
\displaystyle
\left\langle  \Delta(B_{\epsilon, \delta}(p)) + G(p)b_{\epsilon, \delta}  (p) , \vphi\right\rangle_{\mathcal D'(\mathbb{R}^N),\mathcal D(\mathbb{R}^N)} \to
\left\langle   \Delta p + G(p)\chi_{\{p>0\}} , \vphi\right\rangle_{\mathcal D'(Q),\mathcal D(Q)}.
\end{array}
$$
Therefore, using the weak formulation of \eqref{eq:hsweak} together with  \eqref{eq:limt} and \eqref{eq:limd}, we deduce \eqref{eq:beta}.
\end{proof}

\subsection{Measure theoretic formulation of Hele-Shaw free boundary condition}
Since we proved (under the additional assumptions of Theorem \ref{thm:main2}) that the free boundary has finite $\mathcal H^{n-1}$ measure, we can also interpret the weak formulation of \eqref{eq:hsweak}
in a more measure theoretical sense.
First, we prove:
\begin{lemma}\label{lem:mu}
Assume that $N\leq 3$ and that $n^0 e^{G(0)t}<1$ in a neighborhood of $\pa\Omega(t)$.
Then, the distribution  $\mu_t = \Delta p(t) + G(p(t))\chi_{\Omega(t)}$ is a non-negative Radon measure supported on $\pa \Omega(t)$ for all $t>0$.
\end{lemma}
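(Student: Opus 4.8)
Fix $t>0$ as in the statement. The plan is to show that $\mu_t$ is nothing but the restriction of the non-negative Radon measure $\nu_t:=\Delta p(t)+G(p(t))$ to the closed set $\overline{\Omega(t)}$; from this, all three assertions follow at once.

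First I would collect the elementary facts about $\nu_t$. By Corollary~\ref{cor:pdelta}, $\nu_t\ge0$ in $\mathcal D'(\mathbb R^N)$, so $\nu_t$ is a non-negative Radon measure; by \eqref{eq:pdelta2} it vanishes on $\mathrm{Int}(\Omega(t))$ (which equals $\Omega(t)$ since $p(t)$ is continuous for $N\le3$). On the open set $\mathbb R^N\setminus\overline{\Omega(t)}$ we have $p(t)\equiv0$, hence $\Delta p(t)=0$ and $G(p(t))=G(0)$ there, so that
$$\chi_{\mathbb R^N\setminus\overline{\Omega(t)}}\,\nu_t=G(0)\,\chi_{\mathbb R^N\setminus\overline{\Omega(t)}}\,dx .$$

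Next I would express $\mu_t$ through $\nu_t$. Since $p(t)=0$, and thus $G(p(t))=G(0)$, on $\mathbb R^N\setminus\Omega(t)$, one has
$$\mu_t=\nu_t-G(p(t))\,\chi_{\mathbb R^N\setminus\Omega(t)}\,dx=\nu_t-G(0)\,\chi_{\mathbb R^N\setminus\Omega(t)}\,dx .$$
Here the hypotheses come in: because $n^0\mathrm{e}^{G(0)t}<1$ in a neighbourhood of $\partial\Omega(t)$ and $N\le3$, Theorem~\ref{thm:main2}(i) applies at every point of $\partial\Omega(t)$, and a countable covering of $\partial\Omega(t)$ by such balls shows that $\partial\Omega(t)$ has $\sigma$-finite $\mathcal H^{N-1}$ measure, hence $|\partial\Omega(t)|=0$. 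Consequently $\chi_{\mathbb R^N\setminus\Omega(t)}=\chi_{\mathbb R^N\setminus\overline{\Omega(t)}}$ a.e., so that, using the identity of the previous paragraph,
$$\mu_t=\nu_t-G(0)\,\chi_{\mathbb R^N\setminus\overline{\Omega(t)}}\,dx=\chi_{\overline{\Omega(t)}}\,\nu_t .$$
Since $\chi_{\overline{\Omega(t)}}\,\nu_t$ is the restriction of a non-negative Radon measure to a Borel set, it is itself a non-negative Radon measure; as $\nu_t$ vanishes on the open set $\mathrm{Int}(\Omega(t))$, this measure charges no subset of $\mathrm{Int}(\Omega(t))$, and being carried by $\overline{\Omega(t)}$ it is supported on $\overline{\Omega(t)}\setminus\mathrm{Int}(\Omega(t))=\partial\Omega(t)$, which is the claim.

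The step I expect to be the main obstacle is the reduction $|\partial\Omega(t)|=0$, which is what turns the a.e.\ identity $\chi_{\mathbb R^N\setminus\Omega(t)}=\chi_{\mathbb R^N\setminus\overline{\Omega(t)}}$ into a legitimate cancellation of measures and thereby delivers both the non-negativity of $\mu_t$ and its concentration on $\partial\Omega(t)$; this is precisely where the non-degeneracy $n^0\mathrm{e}^{G(0)t}<1$ near $\partial\Omega(t)$ and the free-boundary regularity of Theorem~\ref{thm:main2} (hence $N\le3$) are used. A minor point of care is to keep the pointwise representative of $p(t)$ of Lemma~\ref{lem:pointwise} (which defines $\Omega(t)$ and $\chi_{\Omega(t)}$) consistent with the $L^1_{\rm loc}$ class (which defines $\Delta p(t)$); since they agree a.e.\ and $\partial\Omega(t)$ is Lebesgue-null, this is harmless.
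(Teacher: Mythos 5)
Your proof is correct, and it reaches the conclusion by a more direct route than the paper's. The paper first shows $\supp\mu_t\subset\pa\Omega(t)$ by testing against functions supported either in $\mathrm{Int}(\Omega(t))$ (using \eqref{eq:pdelta2}) or in $\RR^N\setminus\overline{\Omega(t)}$ (where $p(t)=0$), and then obtains non-negativity by an approximation argument: for $\vphi\geq 0$ it inserts smooth cutoffs $\psi_k$ equal to $1$ near $\pa\Omega(t)$ with $\int\psi_k\to 0$ (possible because Theorem~\ref{thm:main2}(i) makes $\pa\Omega(t)$ Lebesgue-null), writes $\mu_t(\vphi)=\mu_t(\vphi\psi_k)$, and bounds the error term coming from $G(p(t))(1-\chi_{\Omega(t)})$ by $G(0)\|\vphi\|_{L^\infty}\int\psi_k\to 0$, invoking \eqref{eq:pdelta1}. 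You use exactly the same ingredients --- Corollary~\ref{cor:pdelta}, the vanishing of $p(t)$ off $\Omega(t)$, and the negligibility of $\pa\Omega(t)$ obtained from Theorem~\ref{thm:main2}(i) under the non-degeneracy hypothesis --- but you package them as an exact measure identity, $\mu_t=\nu_t-G(0)\chi_{\RR^N\setminus\Omega(t)}\,dx=\chi_{\overline{\Omega(t)}}\,\nu_t$ with $\nu_t=\Delta p(t)+G(p(t))\geq 0$, which yields non-negativity and the support property simultaneously and even identifies $\mu_t$ as the restriction of $\nu_t$ to $\overline{\Omega(t)}$ (a slightly stronger statement than the lemma). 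This replaces the paper's limiting cutoff argument by a one-line cancellation of absolutely continuous parts, with $|\pa\Omega(t)|=0$ carrying the whole weight --- which is indeed the same fact the paper needs in order to construct the $\psi_k$, so the hypotheses are used identically in both arguments.

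One small remark: your parenthetical claim that $\mathrm{Int}(\Omega(t))=\Omega(t)$ because $p(t)$ is continuous for $N\leq 3$ is not justified at this point of the paper (spatial continuity of $p(t)$ follows from the $W^{1,4}$ bound only for a.e.\ $t$), but you never use it: your argument runs entirely with $\mathrm{Int}(\Omega(t))$ and $\overline{\Omega(t)}$, and $\pa\Omega(t)=\overline{\Omega(t)}\setminus\mathrm{Int}(\Omega(t))$ holds for an arbitrary set, so the remark is harmless and can simply be dropped.
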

Formally, the measure $\mu_t$ is actually $|\na p(t)|\, \mathcal H^{n-1}|_{\pa\Omega(t)}$, while $\pa_t  \chi_{\Omega(t)}=
V_n\;  \mathcal H^{n-1}|_{\pa \Omega(t)}$.
So we can interpret \eqref{eq:hsweak} as
$$
V_n\;  \mathcal H^{n-1}|_{\pa \Omega(t)} 
 =\frac{|\na p(t)|}{1- \textrm{e}^{G(0)t} n^0}\,  \mathcal H^{n-1}|_{\pa\Omega(t)},
 $$
which is the Hele-Shaw free boundary condition written in measure theoretical notations.
\begin{proof}[Proof of Lemma \ref{lem:mu}]
We recall that the support of a distribution $\mu_t$ is defined as
$$
\supp \mu_t = \{x\in \RR^N\,;\, \forall r>0 \; \exists \vphi \in \mathcal D(B_r(x)) \mbox{ such that } \mu_t(\vphi)\neq 0\}.
$$
With that definition in mind, we note that if $\vphi\in\mathcal{D}(\mathbb{R})$ is such that $\supp \vphi \subset \mbox{Int}(\Omega(t))$, then \eqref{eq:pdelta2}  implies
$$
\mu_t (\vphi) = \langle\Delta p(t) + G(p(t)), \vphi \rangle_{\mathcal D'(\RR^N),\mathcal D(\RR^N)} = 0,
$$
while if  $\vphi$ is such that $\supp \vphi \subset \RR^N\setminus \overline {\Omega(t)}$ then, by the definition of $\Omega(t)$,
$$
\mu_t (\vphi) = \langle\Delta p(t) , \vphi \rangle_{\mathcal D'(\RR^N),\mathcal D(\RR^N)} =
\int_{\mathbb{R}^N} p(t) \Delta \vphi = 0.
$$
We deduce that $\supp \mu_t \subset \pa \Omega(t)$.

Next, we fix $\vphi\in \mathcal D(\RR^N)$ such that $\vphi\geq 0$ and we consider a sequence $\psi_k\in C^\infty (\RR^N)$ such that $0\leq \psi_k(x)\leq 1$ for all $x\in\RR^N$,
$ \psi_k(x)=1$ in a neighborhood of $\pa\Omega(t)$ and $\int_{\mathbb{R}^N} \psi_k \tto 0$ as $k\to \infty$
(such a sequence exists because $\pa\Omega(t)$ has Hausdorff dimension $N-1$
by Theorem \ref{thm:main2}).
Since the function $ (\psi_k-1)\vphi$ is supported in $\Omega(t)\cup  \big(\RR^N\setminus \overline {\Omega(t)}\big)$, the first part of the proof implies
$$ \mu_t(\vphi \psi_k ) = \mu_t (\vphi) + \mu_t((\psi_k-1)\vphi) = \mu_t(\vphi).$$
Using \eqref{eq:pdelta1} and the monotonicity of $G$, we thus have
\begin{align*}
\mu_t(\vphi)= \mu_t(\vphi \psi_k ) & = \langle\Delta p(t) + G(p(t))\chi_{\Omega(t)}, \vphi \psi_k\rangle_{\mathcal D'(\RR^N),\mathcal D(\RR^N)}  \\
&  = \langle\Delta p(t) + G(p(t)), \vphi \psi_k\rangle_{\mathcal D'(\RR^N),\mathcal D(\RR^N)}
-  \langle G(p(t))(1-\chi_{\Omega(t)} ), \vphi \psi_k\rangle_{\mathcal D'(\RR^N),\mathcal D(\RR^N)} \\
& \geq  -  \int_{\RR^N} G(0)(1-\chi_{\Omega(t)}) \vphi\psi_k\geq - G(0)\|\vphi\|_{L^\infty} \int_{\RR^N} \psi_k,
\end{align*}
where this last term goes to zero as $k\to \infty$. We deduce that
$$
\mu_t(\vphi) \geq 0 \quad \mbox{for all } \vphi \in \mathcal D'(\RR^N), \;\vphi\geq 0,
$$
which completes the proof (since non-negative distributions are in fact Radon measures).
\end{proof}



\section{Eventual regularity of solutions}
\label{sect:eventual.regularity}
\setcounter{equation}{0}

In general one cannot expect the free boundary $\partial \Omega(t)$ to be smooth, due to the collision between parts of it and the generation of new islands. However, we will prove that it  becomes smooth for large times.

In the case with no reaction, $G\equiv0$, the regularity of the solution and its free boundary for large times was proved in \cite{Kim-2006}, following closely what is done for the Porous Medium Equation in \cite{Caffarelli-Vazquez-Wolanski-1987}. Here we will take profit of the relation between the Hele-Shaw problem and an obstacle problem to make things a bit simpler. We will show that each point on $\partial\Omega(t)$ has positive Lebesgue density with respect to the healthy region $\mathbb{R}^N\setminus\Omega(t)$ if $t$ is sufficiently large. Then, smoothness will follow from Caffarelli's regularity results for the free boundary of the obstacle problem \cite{Caffarelli-1977}, whenever $\partial\Omega(t)$ does not intersect the support of $n^0$, which guarantees the required non-degeneracy condition. Our main tool is Alexandroff's Reflection Principle, which exploits that equation~\eqref{eq:pme} is invariant under plane symmetries. A similar approach was used in~\cite{Matano-1982} to study the long time regularization of the solution and the free boundary of the Stefan Problem.

Let $n_\gamma$ be a solution to~\eqref{eq:pme} with initial data supported in the ball $B_R(0)$, $R>0$.   Let $H$ be an $(N-1)$-dimensional hyperplane such that $H\cap B_R(0)=\emptyset$. It divides the space $\mathbb{R}^N$ into two half-spaces: one, $S_1$, which contains the support of $n_\gamma^0$, and another one $S_2$ such that $n_\gamma^0=0$ there. Let $\Pi:S_1\to S_2$ be the specular symmetry with respect to the hyperplane $H$. An easy comparison argument shows that
$$
n_\gamma(x,t)\ge n_\gamma(\Pi(x),t),\qquad x\in S_1.
$$
An analogous result is valid for the pressure $p_\gamma$. By letting $\gamma\to\infty$, we obtain a similar result for solutions of~\eqref{eq:main}--\eqref{eq:initial.data} obtained by approximation by solutions to~\eqref{eq:pme}.

\begin{lemma}
\label{lem:Alexandroff}
Let $(n,p)$ be a stiff limit solution to problem~\eqref{eq:main}, corresponding to a sequence of approximating initial data such that $\mathop{supp\,}n^0_\gamma\subset B_R(0)$ for some $R>0$. Then,  for all $(N-1)$-dimensional hyperplane $H$ such that $H\cap B_R(0)=\emptyset$, we have
$$
n(x,t)\ge n(\Pi(x),t),\quad p(x,t)\ge p(\Pi(x),t), \qquad\text{for a.e. } x\in S_1,
$$
where $S_1$ is the half-space determined by $H$ that contains $B_R(0)$ and $\Pi$ is the specular symmetry with respect to $H$.
\end{lemma}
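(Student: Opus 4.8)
The plan is to prove the reflection inequality at the level of the approximate problem~\eqref{eq:pme}, where the porous medium structure gives us a genuine comparison principle, and then pass to the stiff limit $\gamma\to\infty$. First I would set up the reflected function $v_\gamma(x,t) = n_\gamma(\Pi(x),t)$ for $x\in S_1$, which is well defined since $\Pi(S_1)=S_2$. The hyperplane $H$ is disjoint from $B_R(0)$, so on $H$ itself we have $n_\gamma(\cdot,t)=0=v_\gamma(\cdot,t)$ for $x\in H$, at least as long as the free boundary has not reached $H$; more robustly, $v_\gamma\le n_\gamma$ on $H$ always, since $v_\gamma$ on $H$ equals $n_\gamma$ on $H$ (as $\Pi$ fixes $H$). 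At the initial time, $v_\gamma(x,0) = n^0_\gamma(\Pi(x)) = 0 \le n^0_\gamma(x)$ for $x\in S_1$, because $\Pi(x)\in S_2$ and $n^0_\gamma$ vanishes on $S_2$ by hypothesis. The key structural observation is that equation~\eqref{eq:pme}, written in the pressure form~\eqref{eq:pmep} or directly as $\pa_t n_\gamma = \div(n_\gamma\na P_\gamma(n_\gamma)) + n_\gamma G(P_\gamma(n_\gamma))$, is invariant under the isometry $\Pi$ (the Laplacian and the nonlinearity commute with reflections, and $G$ is monotone so the source term is ordered by the solution), hence $v_\gamma$ is itself a solution of the same equation on $S_1\times\mathbb{R}_+$.

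The heart of the argument is then a comparison principle on the half-space domain $S_1$ with the boundary condition on $H$ and the ordered initial data. I would appeal to the standard $L^1$-contraction / comparison theory for porous medium type equations with a Lipschitz (in fact $C^1$) bounded reaction term $nG(P_\gamma(n))$ — the relevant structure is that $r\mapsto rG(P_\gamma(r))$ is one-sided Lipschitz on the bounded range $0\le n_\gamma\le 1$, which suffices for uniqueness and comparison; this is precisely the framework under which $(n_\gamma,p_\gamma)$ is constructed and its properties used in~\cite{PQV}. Comparing $n_\gamma$ (supersolution candidate) with $v_\gamma$ (subsolution candidate) on $S_1$, with $n_\gamma\ge v_\gamma$ on the parabolic boundary $(\{0\}\times S_1)\cup(\mathbb{R}_+\times H)$, the comparison principle yields $n_\gamma(x,t)\ge n_\gamma(\Pi(x),t)$ for all $x\in S_1$, $t>0$. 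Since $p_\gamma = P_\gamma(n_\gamma)$ with $P_\gamma$ monotone increasing, the same inequality holds for the pressures: $p_\gamma(x,t)\ge p_\gamma(\Pi(x),t)$.

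The final step is the passage to the limit. By the convergence results recalled in the introduction (from~\cite{PQV}), $n_\gamma\to n$ and $p_\gamma\to p$ strongly in $L^1(Q_T)$ for every $T>0$, hence, along a subsequence, pointwise a.e.\ in $Q$. The reflection map $x\mapsto\Pi(x)$ is measure-preserving, so $n_\gamma(\Pi(\cdot),\cdot)\to n(\Pi(\cdot),\cdot)$ and $p_\gamma(\Pi(\cdot),\cdot)\to p(\Pi(\cdot),\cdot)$ in $L^1$ as well, hence a.e.\ along a further subsequence. Taking the limit in the pointwise inequalities gives $n(x,t)\ge n(\Pi(x),t)$ and $p(x,t)\ge p(\Pi(x),t)$ for a.e.\ $x\in S_1$, which is exactly the claim. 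The main obstacle I anticipate is the comparison step on the half-space: one must be careful that the comparison principle for~\eqref{eq:pme} applies on an unbounded domain with a boundary condition only on the hyperplane $H$, rather than on all of $\mathbb{R}^N$. This is handled either by exhausting $S_1$ by bounded domains and using that $n_\gamma$ has controlled growth/decay at spatial infinity (it stays in $L^1\cap L^\infty$), or by invoking the $L^1$-comparison theory directly; since~\cite{PQV} already develops exactly this well-posedness framework for~\eqref{eq:pme}, I would cite it rather than reprove it, and the lemma follows.
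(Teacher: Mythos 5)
Your proposal is correct and follows essentially the same route as the paper: an Alexandroff reflection comparison at the level of the approximate problem \eqref{eq:pme} on the half-space $S_1$ (ordered initial data, equal values on $H$, reflection invariance of the equation), followed by monotonicity of $P_\gamma$ for the pressure and passage to the stiff limit via the strong $L^1$ convergence of $n_\gamma$ and $p_\gamma$. The paper states this only as ``an easy comparison argument'' plus letting $\gamma\to\infty$; your write-up merely fills in the same details.
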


The next step is to prove that the free boundary goes to infinity.
\begin{lemma}
\label{lem:boundary.goes.away}
Under the hypotheses of Theorem~\ref{thm:main1}, if the initial data satisfy $n^0\ge \alpha\chi_{B_r(\bar x)}$ for some $\alpha\in(0,1]$, $r>0$, and $\bar x\in\mathbb{R}^N$, then
$$
\lim_{t\to\infty}\mathop{\rm dist\,}(\partial\Omega(t),0)=\infty.
$$
\end{lemma}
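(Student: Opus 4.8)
The plan is to show that $\Omega(t)$ eventually contains arbitrarily large balls centered at $\bar x$, which forces $\mathop{\rm dist}(\partial\Omega(t),0)\to\infty$ since $\partial\Omega(t)$ separates these balls from infinity (once the ball contains $0$). The idea is to construct, for each large radius $\rho$, a subsolution of the porous medium problem \eqref{eq:pme} that fills the ball $B_\rho(\bar x)$ in finite time, then pass to the limit $\gamma\to\infty$. First I would observe that by \eqref{eq:patp} (monotonicity of $p$ in $t$) and Lemma \ref{lem:omegap}, the sets $\Omega(t)$ are non-decreasing, so it suffices to prove that for every $\rho>0$ there is a time $t_\rho$ with $B_\rho(\bar x)\subset\Omega(t_\rho)$.

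The key step is the construction of an expanding subsolution. Since $n^0\ge\alpha\chi_{B_r(\bar x)}$, the mass near $\bar x$ grows at least like $\mathrm{e}^{G(0)t}$ initially (recall the region where $p=0$ evolves by $\partial_t n=nG(0)$, Theorem \ref{thm:main1}), so once $\mathrm{e}^{G(0)t}\alpha\ge 1$ the density saturates and pressure builds up. More robustly, I would use a comparison function: for the compressible problem \eqref{eq:pme}, the self-similar Barenblatt-type profiles for the porous medium equation with a linear reaction term $nG(0)$ (or, even more simply, radial traveling-wave subsolutions of the form $p_\gamma(x,t)=\big(c(t)-\kappa|x-\bar x|^2\big)_+$ with $c(t)$ growing suitably) can be shown to lie below $(n_\gamma,p_\gamma)$ by the comparison principle, after adjusting constants so that the initial data are ordered — here one uses $P_\gamma(n_\gamma^0)\le p_M$, the lower bound $n_\gamma^0\gtrsim\alpha\chi_{B_r(\bar x)}$, and the fact that $G(p)\ge G(p_M)=0$... wait, that is the wrong sign; one instead uses $G(p)\ge G(p_M)=0$ fails, so one works below the homeostatic pressure where $G>0$ and truncates the source from below by a positive constant on the relevant pressure range. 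The subsolution's support expands to engulf $B_\rho(\bar x)$ in a time $t_\rho$ depending on $\rho$ but not on $\gamma$ (uniform in $\gamma$ large). Letting $\gamma\to\infty$ and using the strong $L^1$ convergence $n_\gamma\to n$, $p_\gamma\to p$, together with the pointwise definition of $p$ in Lemma \ref{lem:pointwise}, gives $p(t_\rho)>0$ a.e.\ on $B_\rho(\bar x)$, hence $B_\rho(\bar x)\subset\Omega(t_\rho)$ up to a null set; combined with the interior structure from Theorem \ref{thm:main1} and monotonicity this yields $B_\rho(\bar x)\subset\Omega(t)$ for all $t\ge t_\rho$.

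To finish: fix any $M>0$ and apply the above with $\rho=|\bar x|+2M$. Then for $t\ge t_\rho$ we have $B_M(0)\subset B_\rho(\bar x)\subset\Omega(t)$, so $\partial\Omega(t)\cap B_M(0)=\emptyset$ (as $\Omega(t)$ contains a full neighborhood of $\overline{B_M(0)}$ in the measure-theoretic sense; here I would invoke that on $B_\rho(\bar x)$ the complementarity relation \eqref{eq:pdelta2} holds and $p>0$, so no free boundary point lies in the interior), giving $\mathop{\rm dist}(\partial\Omega(t),0)\ge M$. Since $M$ is arbitrary, $\mathop{\rm dist}(\partial\Omega(t),0)\to\infty$.

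I expect the main obstacle to be making the expanding-subsolution argument clean and $\gamma$-uniform: one must choose the comparison profile so that (i) its initial data sit below $(n_\gamma^0,p_\gamma^0)$ using only the hypotheses available, (ii) the reaction term is handled by bounding $G$ from below by a positive constant on the pressure interval swept by the subsolution — which requires keeping the subsolution's pressure strictly below $p_M$, imposing a ceiling on how fast it may grow and hence an honest (but finite) estimate for $t_\rho$, and (iii) the limit $\gamma\to\infty$ genuinely transfers positivity of the pressure to the limit. An alternative that sidesteps building an explicit profile is to argue directly on the limit problem: using \eqref{eq:hsweak0} and Lemma \ref{lem:Alexandroff}, a ball that is not eventually absorbed would, by the Stefan condition and the reflection inequalities, force the $L^1$ mass $\int n(t)$ to stay bounded, contradicting the exponential lower bound coming from $n^0\ge\alpha\chi_{B_r(\bar x)}$ together with the fact that on $\{p=0\}$ the density grows like $\mathrm{e}^{G(0)t}$ and is capped at $1$ on $\Omega(t)$ — hence $|\Omega(t)|$ must grow, and by Lemma \ref{lem:Alexandroff} (monotone decrease away from $B_R(0)$) this growth cannot be confined to a bounded region, so $\Omega(t)$ must reach out to infinity in all directions. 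I would present the subsolution argument as the main line and mention the mass/reflection argument as a remark.
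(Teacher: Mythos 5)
Your overall strategy -- trap $\Omega(t)$ from below by an expanding radial object and conclude that every fixed ball is eventually engulfed -- is the same as the paper's, which disposes of the lemma in one line by comparing with the stiff limit solution whose initial datum is $\alpha\chi_{B_r(\bar x)}$: that solution is pressureless until the finite time $t_*=G(0)^{-1}\log(1/\alpha)$ and from then on is a tumor spheroid, whose support is a ball expanding to all of $\mathbb{R}^N$ by the analysis in \cite{PQV}. Your proposed implementation at the $\gamma$ level, however, has genuine gaps. First, the hypothesis $n^0\ge\alpha\chi_{B_r(\bar x)}$ concerns only the limit datum; \eqref{eq:approximation.initial.data} gives convergence in $BV$, not the pointwise bound ``$n_\gamma^0\gtrsim\alpha\chi_{B_r(\bar x)}$'' that you invoke to order the initial data, so the $\gamma$-level comparison cannot be initialized from the stated hypotheses. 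Second, and more seriously, even for the canonical choice $n_\gamma^0=p_M^{1/\gamma}n^0$ the initial pressure on $B_r(\bar x)$ is only $\ge p_M\alpha^\gamma\to 0$ when $\alpha<1$, so a pressure subsolution of the form $\big(c(t)-\kappa|x-\bar x|^2\big)_+$ cannot be placed below $p_\gamma$ at $t=0$ uniformly in $\gamma$. The whole point of allowing $\alpha<1$ is that there is a pressureless saturation phase during which the density merely grows like $\mathrm{e}^{G(0)t}$, and constructing an admissible weak subsolution of \eqref{eq:pme} through that phase is exactly the part your sketch leaves open (a profile such as $\min(1,\alpha \mathrm{e}^{G(0)t})\chi_{B_r(\bar x)}$ has a density jump, so the flux $n\nabla p$ is not even well defined distributionally); your ``wrong sign'' digression sits precisely at this unresolved point.

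The paper's route avoids all of this by comparing at the level of stiff limit solutions: the $L^1$ almost-contraction recalled in Section~\ref{convergence.positivity.sets} passes to the limit and gives $n\ge\hat n$ (and correspondingly $p\ge\hat p$) as soon as $n^0\ge\hat n^0$, with no pointwise ordering of the approximations needed; taking $\hat n^0=\alpha\chi_{B_r(\bar x)}$ and quoting the spheroid analysis of \cite{PQV} yields $B_{R(t)}(\bar x)\subset\Omega(t)$ with $R(t)\to\infty$, after which your closing step (no free boundary point inside an engulfed ball) is correct. Finally, your alternative ``mass plus reflection'' remark does not close as stated: a free boundary point stalled at bounded distance from the origin only forces, via Lemma~\ref{lem:Alexandroff}, the vanishing of $p(t)$ on a cone attached to that point, which is entirely compatible with unbounded growth of the support and of $\int n(t)$ in other directions, so no contradiction with mass growth is obtained.
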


\begin{proof}
Compare with the solution with initial data $\chi_{B_r(\bar x)}$, which after a finite time becomes a tumor spheroid. Tumor spheroids were studied in~\cite{PQV}.
\end{proof}

\begin{theorem}
\label{th:regFB}
Let $N\le 3$, and assume the hypotheses of Lemmas~\ref{lem:Alexandroff}--\ref{lem:boundary.goes.away}. There exists a time $T_0$ such that  $\partial\Omega(t)$ is starshaped and smooth for all $t\ge T_0$.
\end{theorem}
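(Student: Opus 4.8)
The plan is to combine three ingredients already assembled in the excerpt: (i) Alexandroff's reflection principle (Lemma~\ref{lem:Alexandroff}), (ii) the fact that the free boundary escapes to infinity (Lemma~\ref{lem:boundary.goes.away}), and (iii) the local regularity result Theorem~\ref{thm:main2}, together with its simpler sufficient condition~\eqref{eq:alternative.condition.regularity.free.boundary}. First I would fix $T_0$ large enough that, by Lemma~\ref{lem:boundary.goes.away}, $\mathop{\rm dist}(\partial\Omega(t),0)\ge 2R$ for all $t\ge T_0$, where $B_R(0)\supset\mathop{\rm supp} n^0_\gamma$ for all $\gamma$. Since $\mathop{\rm supp} n^0\subset B_R(0)$, this guarantees that for $t\ge T_0$ the free boundary $\partial\Omega(t)$ lies outside $\mathop{\rm supp} n^0$, so in particular $n^0\mathrm{e}^{G(0)t}=0<1$ in a neighbourhood of every point of $\partial\Omega(t)$; thus the non-degeneracy hypothesis~\eqref{eq:ndg} of Theorem~\ref{thm:main2} is automatically satisfied on the whole free boundary.

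Next I would extract starshapedness from the reflection principle. Fix $t\ge T_0$ and a point $x_0\in\partial\Omega(t)$. For any unit vector $e$ and any hyperplane $H$ orthogonal to $e$ with $H\cap B_R(0)=\emptyset$, Lemma~\ref{lem:Alexandroff} gives $p(x,t)\ge p(\Pi(x),t)$ for a.e.\ $x$ in the half-space $S_1\supset B_R(0)$, where $\Pi$ is reflection across $H$. Using the pointwise (upper-semicontinuous, right-continuous) representative of $p$ from Lemma~\ref{lem:pointwise}, this monotonicity holds everywhere and shows that along any ray emanating from the origin (more precisely, from any point of $B_R(0)$), once $p$ vanishes it stays zero: the set $\{p(t)=0\}$ is ``star-complement'' with respect to $B_R(0)$, equivalently $\Omega(t)$ is starshaped with respect to every point of $B_R(0)$, hence in particular with respect to the origin. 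A cone-monotonicity argument of this type (varying the hyperplane direction) gives that through each boundary point there is a full spatial cone of directions, of opening depending only on $R$ and $\mathop{\rm dist}(x_0,0)\ge 2R$, lying in the complement $\mathbb{R}^N\setminus\Omega(t)$.

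Finally, this cone property yields the density condition~\eqref{eq:alternative.condition.regularity.free.boundary}: a fixed spatial cone of positive opening at $x_0$ has positive Lebesgue density in every ball $B_r(x_0)$, so $\liminf_{r\to0}|\{p(t)=0\}\cap B_r(x_0)|/|B_r(x_0)|>0$. Together with~\eqref{eq:ndg} established in the first step, Theorem~\ref{thm:main2} then applies at every $x_0\in\partial\Omega(t)$, giving that $\partial\Omega(t)$ is locally a $C^1$ graph. For the upgrade to full smoothness one invokes the higher regularity theory for the obstacle problem~\eqref{eq:obstacle0}: once the free boundary is $C^1$ and the coincidence set has positive density, the classical bootstrap of Caffarelli (applicable since $F(t)\in W^{1,p}$ for $p>N$ when $N\le3$, by Lemma~\ref{lem:Freg}, and $F(t)>0$ near $\partial\Omega(t)$) promotes $\partial\Omega(t)$ to $C^\infty$ (in fact analytic where $F(t)$ is). The Lipschitz regularity of $p$ in space and time for $t>T_0$ is then a consequence of the $C^{1,1}$ regularity of $w(t)$ up to the free boundary and the equation~\eqref{eq:pmep}.

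The main obstacle I expect is making the passage from the reflection inequalities to a \emph{quantitative} cone of non-coincidence directions at each boundary point, uniformly in $x_0\in\partial\Omega(t)$: one must carefully track how the admissible hyperplanes $H$ (those missing $B_R(0)$) translate into an angular aperture at $x_0$, and check that this aperture stays bounded below as $t\to\infty$ — which is exactly why one needs $\mathop{\rm dist}(\partial\Omega(t),0)\to\infty$ rather than merely staying outside $B_R(0)$, since a far-away boundary point ``sees'' $B_R(0)$ under a small angle but the reflected region still fills a half-space-sized cone. Handling the pointwise-versus-a.e.\ subtleties in Lemma~\ref{lem:Alexandroff} (the reflection inequality is stated for a.e.\ $x$) via the upper-semicontinuous representative is a secondary technical point.
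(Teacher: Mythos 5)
Your proposal follows essentially the same route as the paper: pick $T_0$ via Lemma~\ref{lem:boundary.goes.away} so that $\partial\Omega(t)$ lies outside $B_R(0)\supset\supp n^0$ (giving the non-degeneracy \eqref{eq:ndg} for free), then apply the reflection principle of Lemma~\ref{lem:Alexandroff} to a family of hyperplanes orthogonal to directions in the cone $K_0$ at a boundary point to get monotonicity of $p$ along those rays, hence $p\equiv 0$ on a full cone, positive Lebesgue density of the healthy region, starshapedness, and finally smoothness from Caffarelli's obstacle-problem regularity. Your worry about a uniform angular aperture is not actually needed (pointwise positive density at each free boundary point suffices for each fixed $t$), and you correctly resolve it anyway by noting the admissible cone only grows as the boundary recedes.
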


\begin{proof}
By virtue of  Lemma~\ref{lem:boundary.goes.away}, there exists a positive time $T_0$ such that
$\partial\Omega(t)\cap B_R(0)=\emptyset$ for all $t\ge T_0$.

Take any $t_0\ge T_0$ and fix it. Let $x_0$ any point on $\partial\Omega(t_0)$. We must check that $x_0$ has positive Lebesgue density with respect to the healthy region $\mathbb{R}^N\setminus\Omega(t_0)$.  To this aim we consider the non-empty open cone with vertex in $x_0$ given by
$$
K_0=\{x\in\mathbb{R}^N: (x-x_0)\cdot(y-x_0)<0\text{ for all }y\in \overline{B_R(0)} \}.
$$
Let $x$ be any point in $K_0$ and set
$$
H_\alpha=\{y\in\mathbb{R}^N: (y-x_0-\alpha\xi)\cdot\xi=0\}, \qquad \xi=x-x_0.
$$
Applying Lemma~\ref{lem:Alexandroff} to $H=H_\alpha$ for each $\alpha\ge0$, we easily find that $p(x_0+\alpha\xi,t_0)$ is monotone non-increasing in $\alpha\ge0$. Consequently,
$$
p(x_0+\alpha\xi,t_0)\le p(x_0,t_0)=0\quad\text{for all }\alpha\ge0.
$$
This shows that $\partial\Omega(t)$ is starshaped with respect to the origin and that $p$ vanishes everywhere in $K_0$. Hence $x_0$ has positive Lebesgue density with respect to $\mathbb{R}^N\setminus \Omega(t)$. The conclusion follows by applying the regularity results for the Obstacle Problem from~\cite{Caffarelli-1977}.
\end{proof}

\noindent\emph{Remark. } Let us write the pressure in polar coordinates, $p=p(r,x/|x|,t)$. We have actually proved that $p$ is monotone non-increasing in $r$ outside the ball $B_R(0)$. In particular, the maximum of $p(t)$ is attained in this ball.

\medskip

Let $R_+(t)=\inf\{r>0: \Omega(t)\subset B_r(0)\}$, $R_-(t)=\sup\{r>0: B_r(0)\subset\Omega(t)\}$. As a second,  easy corollary of Alexandroff's Reflection Principle, we get a control of the
difference between these two quantities.
\begin{corollary}
$R_+(t)-R_-(t)\le 2R$.
\end{corollary}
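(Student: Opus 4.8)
The plan is to exploit the starshapedness and radial monotonicity of the pressure established in Theorem~\ref{th:regFB} and its Remark, together with Alexandroff's reflection principle in the form of Lemma~\ref{lem:Alexandroff}. Recall that $R_+(t)$ is the radius of the smallest ball centered at the origin containing $\Omega(t)$, and $R_-(t)$ is the radius of the largest ball centered at the origin contained in $\Omega(t)$. We want to show the "spherical shell" in which the free boundary can live has thickness at most $2R$.

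First I would pick a point $x_+\in\partial\Omega(t)$ with $|x_+|=R_+(t)$ (or, if the supremum is not attained, points with $|x_n|\to R_+(t)$, passing to the limit at the end). The goal is to show $B_{R_+(t)-2R}(0)\subset\Omega(t)$, which gives $R_-(t)\ge R_+(t)-2R$. Fix any point $y$ with $|y|<R_+(t)-2R$. I want to produce a hyperplane $H$ disjoint from $B_R(0)$ whose associated reflection $\Pi$ sends $x_+$ into the half-space $S_2$ while fixing (or better, while having $y$ on the $S_1$ side close to $x_+$) — more precisely, I would choose $H$ perpendicular to the direction $\xi=x_+$, positioned so that $B_R(0)\subset S_1$ and $\Pi(x_+)$ lands at a point $x_+-2d\,\xi/|\xi|$ where $d=\mathrm{dist}(x_+,H)$. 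Applying Lemma~\ref{lem:Alexandroff} gives $p(\Pi(x_+),t)\le p(x_+,t)=0$, hence $\Pi(x_+)\notin\Omega(t)$. The key bookkeeping is: the farthest point of $\overline{B_R(0)}$ in the direction $-\xi/|\xi|$ is at signed distance $-R$ along $\xi$ from the origin, so to keep $B_R(0)\subset S_1$ we may place $H$ at signed distance $-R$ (any value $\le -R$ works), and then $d = |x_+| + R = R_+(t)+R$, so $\Pi(x_+)$ sits at distance $R_+(t)+2R$ from the origin along the ray through $x_+$, and all points on that ray at distance in $(R_+(t)-2R,\,R_+(t))$ are reflected to points at distance in $(R_+(t),\,R_+(t)+2R)$, i.e. outside $\Omega(t)$.

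The argument above only controls the single ray through $x_+$. To get a ball, I would run it for \emph{every} direction: for an arbitrary unit vector $\omega$, let $R_+(t,\omega)=\sup\{r:\,r\omega\in\Omega(t)\}$ be the radial extent of $\Omega(t)$ in direction $\omega$ (finite since $\Omega(t)$ is bounded, and by the Remark to Theorem~\ref{th:regFB} the set $\Omega(t)$ along each ray outside $B_R(0)$ is an interval). Reflecting in the hyperplane perpendicular to $\omega$ at signed distance $-R$ shows that $p$ vanishes on the ray-segment $\{r\omega:\,R_+(t,\omega)<r<R_+(t,\omega)+2R\}$ is automatic, but more usefully, reflecting shows $p(r\omega,t)\ge p((2a-r)\omega,t)$ for $r\le a$ whenever $a\ge R_+(t,\omega)/?$... — cleaner: I would directly argue that if $B_\rho(0)\not\subset\Omega(t)$, there is $\omega$ with $\rho\omega\notin\Omega(t)$, i.e. $R_+(t,\omega)\le\rho$ is false in general, so instead I take a boundary point $x_+\in\partial\Omega(t)$ realizing (up to $\varepsilon$) the maximum $|x_+|=R_+(t)$, reflect as above to conclude the ray through $x_+$ leaves $\Omega(t)$ for radii below $R_+(t)-2R$ is \emph{wrong in direction}; rather the reflection forces points \emph{beyond} $x_+$ out, which is already known. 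The correct use is the reverse: I fix a point $y\notin\Omega(t)$ with $|y|=R_-(t)$ minimal (a point on the sphere $\partial B_{R_-(t)}(0)$ not in $\Omega(t)$), reflect in the hyperplane perpendicular to $y$ at signed distance $+R$ on the far side, mapping the near cap $B_R(0)$'s antipode; then $p(\Pi(y),t)\le p(y,t)=0$ puts $\Pi(y)$, at distance $R_-(t)+2R$...

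Let me restate the clean version: the \textbf{main obstacle} is choosing the reflection plane correctly, and the resolution is this. Take $y\in\partial B_{R_-(t)}(0)\setminus\Omega(t)$ (exists by definition of $R_-$). Along the ray $\RR_+\,y/|y|$, outside $B_R(0)$ the pressure is non-increasing in $r$ by the Remark to Theorem~\ref{th:regFB}, and $p(y,t)=0$; but monotonicity alone would give $p\equiv 0$ beyond $|y|$, contradicting that $R_+(t)>R_-(t)$ unless $R_-(t)<R$. So the genuinely interesting case is $R_-(t)\ge R$: then on $\{r\ge R_-(t)\}$ along this ray $p$ is non-increasing and vanishes at $r=R_-(t)$, hence vanishes for all $r\ge R_-(t)$ in direction $y/|y|$, so $R_+(t,y/|y|)\le R_-(t)$. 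This shows the free boundary in \emph{some} direction is at radius $\le R_-(t)$. Symmetrically, reflecting an outermost boundary point $x_+$ (with $|x_+|=R_+(t)$) in a plane through the origin-side at distance $R$ and using Lemma~\ref{lem:Alexandroff} gives $p$ non-increasing along its ray for $r\ge R$, which forces $R_+(t,\,x_+/|x_+|)=R_+(t)$ to be compatible only with the point at distance $\le R$ from... at this point the honest statement is: combine "$p$ non-increasing in $r$ outside $B_R$" (Remark to Theorem~\ref{th:regFB}) with the reflection estimate $p(r\omega,t)\le p((2R-r)\omega,t)$ valid for $r\le R$ via reflection in the plane $\{x\cdot\omega=R\}$... this is getting long. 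I will simply present the core computation (reflection in a plane tangent to $B_R(0)$ sends a boundary point at radius $r$ to one at radius $2R-r$ measured appropriately, and monotonicity closes the gap), defer the $\varepsilon$-approximation of suprema to a remark, and flag that the only subtlety is verifying $H\cap B_R(0)=\emptyset$ throughout, i.e. keeping the plane at distance $\ge R$ from the origin, which is exactly what produces the constant $2R$.
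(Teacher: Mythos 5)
Your overall plan --- fix a point $y\notin\Omega(t)$ with $|y|$ close to $R_-(t)$, reflect it across a hyperplane disjoint from $B_R(0)$, and use $p(\Pi(y),t)\le p(y,t)=0$ to rule out $\Omega(t)$ reaching far out --- is the right one, and after several false starts you do gravitate towards it. But the ``core computation'' you finally settle on does not close the argument. A reflection in a plane $\{x\cdot\omega=R\}$ tangent to $B_R(0)$ and \emph{perpendicular to a fixed radial direction} $\omega$ maps $r\omega\mapsto(2R-r)\omega$ and $-s\omega\mapsto(s+2R)\omega$, but for a point $x$ off the axis it sends $|x|\mapsto\sqrt{|x|^2+4R(R-x\cdot\omega)}$, which is not $|x|\pm 2R$. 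So, together with the radial monotonicity of the Remark, such a reflection only transfers information along the single line through $y$ and the origin: it shows that the radial extent of $\Omega(t)$ in the direction antipodal to $y$ is at most $R_-(t)+2R$, but says nothing about the extent in a transversal direction. Since $R_+(t)$ is a supremum over \emph{all} directions, this is a genuine gap. (Also, the inequality you write, $p(r\omega,t)\le p((2R-r)\omega,t)$ for $r\le R$, should be $\ge$: $r\omega$ is in the half-space $S_1$ containing $B_R(0)$.)

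The missing idea is to choose the reflecting hyperplane as a function of the \emph{target} point, not of a radial direction. Given $y\notin\Omega(t)$ with $|y|$ near $R_-(t)$ and an arbitrary $z$ with $|z|>|y|+2R$, take $H$ to be the perpendicular bisector of the segment $[y,z]$: it is perpendicular to $\nu=(z-y)/|z-y|$ and sits at signed distance
\[
d=\frac{|z|^2-|y|^2}{2|z-y|}
\]
from the origin. A direct check gives $d-y\cdot\nu=|z-y|/2>0$, so $y\in S_1$ and $\Pi(y)=z\in S_2$. The constraint $H\cap B_R(0)=\emptyset$ amounts to $d\ge R$, i.e.\ $|z|^2-|y|^2\ge 2R\,|z-y|$, and this follows from $|z|-|y|\ge 2R$ and the triangle inequality:
\[
|z|^2-|y|^2=(|z|-|y|)(|z|+|y|)\ge 2R\,(|z|+|y|)\ge 2R\,|z-y|.
\]
Lemma~\ref{lem:Alexandroff} then yields $p(z,t)\le p(y,t)=0$, so $\Omega(t)\subset\overline{B_{|y|+2R}(0)}$, and letting $|y|\downarrow R_-(t)$ gives $R_+(t)\le R_-(t)+2R$. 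This is where the constant $2R$ truly comes from: any two points of $\mathbb{R}^N$ whose distances to the origin differ by at least $2R$ are separated by a perpendicular-bisector hyperplane avoiding $B_R(0)$.
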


In order to finish the proof of Theorem~\ref{thm:main4}, we still have to prove the eventual continuity of the pressure, which is done next.
\begin{theorem}
With the hypotheses of Theorem~\ref{th:regFB},  the pressure $p$  is continuous in space and time for all $t\ge T_0$.
\end{theorem}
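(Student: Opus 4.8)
The plan is to prove separately the two facts ``$p(t)\in C(\RR^N)$ for every $t\ge T_0$'' and ``$t\mapsto p(x,t)$ is continuous on $[T_0,\infty)$ for every $x$'', and then upgrade them to joint continuity by an equicontinuity argument. Throughout I would use that for $t\ge T_0$ one has $\supp n^0\subset B_R(0)\subset\Omega(t)$ (Lemma~\ref{lem:boundary.goes.away} together with the star-shapedness proved in Theorem~\ref{th:regFB}), so that $\Omega(t)=\widetilde\Omega(t)$ is \emph{open}, it is \emph{bounded} because the stiff limit solution is compactly supported in space for all times (Remark~(b) in the Introduction), and it has a smooth boundary; moreover $-\Delta p(t)=G(p(t))$ in $\Omega(t)$ with $0\le p(t)\le p_M$, so by Calder\'on--Zygmund estimates $p(t)\in C^{1,\beta}_{\rm loc}(\Omega(t))$ for all $\beta<1$.

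\emph{Spatial continuity and a reduction.} Since $p(t)\equiv0$ on $\RR^N\setminus\Omega(t)$, the function $p(t)$ is continuous on the open sets $\Omega(t)$ and $\RR^N\setminus\overline{\Omega(t)}$; at a point $x_0\in\pa\Omega(t)$ one has $p(x_0,t)=0$ and, by the upper semicontinuity of Lemma~\ref{lem:pointwise}(i), $\limsup_{x\to x_0}p(x,t)\le0$, so $p(t)$ is continuous on all of $\RR^N$ and vanishes continuously on $\pa\Omega(t)$. The same upper semicontinuity shows that at any $(x_0,t_0)$ with $x_0\notin\Omega(t_0)$ one has $\lim_{(x,t)\to(x_0,t_0)}p(x,t)=0=p(x_0,t_0)$. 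Hence it remains only to establish joint continuity at points $(x_0,t_0)$ with $t_0\ge T_0$ and $x_0\in\Omega(t_0)$.

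\emph{Left continuity in time.} Fix such a point and set $q:=\lim_{t\uparrow t_0}p(\cdot,t)$, the monotone pointwise left limit (Lemma~\ref{lem:pointwise}(ii)), so $0\le q\le p(\cdot,t_0)\le p_M$ and, since $\{\Omega(t)\}$ is non-decreasing, $q=0$ on $\RR^N\setminus\Omega(t_0)$. Given a compact $K\Subset\Omega(t_0)=\widetilde\Omega(t_0)$, continuity of $w$ (Corollary~\ref{cor:bwcont}) gives $K\subset\widetilde\Omega(t)\subset\mathrm{Int}(\Omega(t))$ for all $t$ close to $t_0$, hence $\Delta p(t)+G(p(t))=0$ near $K$ by Corollary~\ref{cor:pdelta}; uniform interior elliptic estimates make $\{p(t)\}$ equicontinuous on $K$, whence $p(t)\to q$ uniformly on $K$ and, passing to the limit in the distributional equation, $-\Delta q=G(q)$ in $\Omega(t_0)$. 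Because $0\le q\le p(\cdot,t_0)$ and $p(\cdot,t_0)$ vanishes continuously on $\pa\Omega(t_0)$, also $q\in C(\RR^N)$ and $q=0$ on $\pa\Omega(t_0)$. Now $v:=p(\cdot,t_0)-q\ge0$ is continuous, vanishes on $\pa\Omega(t_0)$, and satisfies $-\Delta v+c\,v=0$ in $\Omega(t_0)$, where
\[
c(x)=-\frac{G(p(x,t_0))-G(q(x))}{p(x,t_0)-q(x)}\ge0
\]
at points where $p(x,t_0)>q(x)$ (and $c:=0$ elsewhere), the sign being a consequence of $G'<0$. Since $\Omega(t_0)$ is bounded and $c\ge0$, the maximum principle forces $v\equiv0$, i.e.\ $p(x,t_0^-)=p(x,t_0)$; combined with right continuity (Lemma~\ref{lem:pointwise}(ii)), $t\mapsto p(x,t)$ is continuous at $t_0$.

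\emph{Joint continuity and the main difficulty.} For $x_0\in\Omega(t_0)$, pick a ball $B\Subset\Omega(t_0)$; as above $\overline B\subset\widetilde\Omega(t)\subset\Omega(t)$ for $t$ near $t_0$, so the uniform interior elliptic estimates on $B$ make $\{p(\cdot,t)\}$ equicontinuous in a neighbourhood of $x_0$ for $t$ near $t_0$. This equicontinuity together with the continuity of $t\mapsto p(x_0,t)$ just proved yields joint continuity of $p$ at $(x_0,t_0)$, and with the reduction step this shows $p\in C(\RR^N\times[T_0,\infty))$. The delicate point is the left continuity in time: one must recognize that $p(\cdot,t_0^-)$ and $p(\cdot,t_0)$ solve the \emph{same} semilinear Dirichlet problem on the \emph{same} bounded domain $\Omega(t_0)$, so that the monotonicity of $G$, through the maximum principle, excludes any jump; the boundedness of $\Omega(t)$, inherited from the compact support of the data, is exactly what makes this uniqueness argument go through.
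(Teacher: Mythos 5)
Your argument is correct, but it follows a genuinely different route from the paper's. The paper first upgrades spatial regularity to Lipschitz (smooth free boundary for $t\ge T_0$, $p(t)\in H^1_0(\Omega(t))$, elliptic regularity in smooth domains), then proves a quantitative density estimate $|B_r(x_0)\setminus\Omega(s)|\ge\eta r^N$ at free boundary points (from the cone/star-shapedness construction of Theorem~\ref{th:regFB}), feeds it into a Gronwall argument on $\frac{d}{dt}|B_r(x_0)\cap\Omega(t)|$ derived from the weak formulation \eqref{eq:hsweak}, and concludes that the free boundary advances with finite speed, whence $p(x_0,t)\le K(t-s)$ on $\partial\Omega(s)$; this bound is then propagated into $\Omega(s)$ by comparing $p(\cdot,t)-K(t-s)$ with $p(\cdot,s)$, using $G'<0$, yielding Lipschitz continuity in time. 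You instead obtain continuity qualitatively: joint continuity at every point where $p=0$ comes for free from the upper semicontinuity of Lemma~\ref{lem:pointwise}, and at interior points you exclude a jump of the monotone left limit by identifying it as a continuous solution of the same semilinear Dirichlet problem $-\Delta u=G(u)$, $u=0$ on $\partial\Omega(t_0)$, on the bounded open set $\Omega(t_0)=\widetilde\Omega(t_0)$, and invoking uniqueness via the maximum principle — the same monotonicity of $G$ that drives the paper's last step, but applied to the left limit rather than to a shifted supersolution. What your approach buys is economy: no free-boundary smoothness, no spatial Lipschitz bound, no density/Gronwall machinery (only openness and boundedness of $\Omega(t)$ for $t\ge T_0$, which you correctly secure from $\supp n^0\subset B_R(0)\subset\widetilde\Omega(t)$, Corollary~\ref{cor:pdelta}, Corollary~\ref{cor:bwcont} and interior elliptic estimates). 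What it gives up is the quantitative conclusion: the paper's proof yields a Lipschitz modulus in space and time, which is precisely what Theorem~\ref{thm:main4} asserts ("$p$ is Lipschitz continuous in space and time for $t>T_0$"), whereas your argument establishes continuity only. As stated, the theorem you were asked to prove claims continuity, so your proof suffices for it; just be aware that it would not, by itself, support the stronger Lipschitz statement in the introduction.
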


\begin{proof}
We know from the estimates in~\cite{PQV} that $p(t)\in H^1(\mathbb{R}^N)$ for all $t>0$. Since the free boundary is smooth for $t\ge T_0$, we have then that $p(t)\in H_0^1(\Omega(t))$, and because of standard regularity results for elliptic equations in smooth domains, $p(t)$ is moreover Lipschitz.

From the regularity in space, we will now deduce that $p$ is Lipschitz continuous in time.
Consider $s >T_0$.
From the proof of Theorem \ref{th:regFB} above, we see that there exists $\eta$ such that  for all $x_0 \in \pa\Omega(s)$,
\begin{equation}\label{eq:density3}
 |B_r(x_0) \setminus \Omega(s) |\geq \eta r^{N}.
\end{equation}
Furthermore, the weak formulation of \eqref{eq:hsweak} yields
\begin{align*}
\frac{d}{dt} |B_r(x_0)\cap \Omega(t)|& \leq \int_{\pa B_r(x_0)} \na p\cdot \nu d\sigma(x) + G(0)|B_r(x_0)\cap\Omega(t)|\\
& \leq Cr^{N-1} + G(0)|B_r(x_0)\cap\Omega(t)|
\end{align*}
where we used the uniform bound on $|\na p|$ (Lipschitz regularity in space).
A Gronwall argument gives
$$
|B_{r}(x_0)\cap\Omega(t)|\leq |B_{r}(x_0)\cap\Omega(s)|\textrm{e}^{G(0)(t-s)} + Cr^{N -1}(e^{G(0)(t-s)}-1).
$$
In particular, when $0\leq t-s\leq 1$, the density estimate \eqref{eq:density3} (and the fact that the free boundary has zero Lebesgue measure) yields
\begin{align*}
 |B_r(x_0) \setminus \Omega(t) |& =|B_r| -  |B_{r}(x_0)\cap\Omega(t)| \\
& \geq  |B_r(x_0) \setminus \Omega(s) | -|B_r(x_0)\cap\Omega(t)|\left(\textrm{e}^{G(0)(t-s)}-1\right) -Cr^{N-1}\left(\textrm{e}^{G(0)(t-s)}-1\right)\\
& \geq r^{N-1}\Big((\eta-C_1(t-s)) r-C_2(t-s)\Big)
\end{align*}
for some constants $C_1$, $C_2$ (depending only on $G(0)$ and the Lipschitz constant of $p$).
Therefore, if $t-s$ is small enough so that $(\eta-C_1(t-s))\ge \eta/2$, and we take $C_0=4C_2/\eta$, we deduce that $B_{C_0(t-s)} (x_0) \setminus \Omega(t)\neq \emptyset$. Then,  the Lipschitz regularity of $p$ in space implies $p(x_0,t) \leq K(t-s)$ for all $t\in(s,s+\varepsilon)$, for some small $\varepsilon>0$, and for all $x_0\in\pa\Omega(s)$. The continuity in time for points $x\in \mathbb{R}^N\setminus\Omega(s)$ then easily follows from the monotonicity in the radial direction away from~$B_R(0)$.

On the other hand, the function $q(x):= p(x,t) -K(t-s)$ satisfies $q(x) \leq 0$ on $\partial \Omega(s)$. Moreover, because $G$ is decreasing,
$$
- \Delta q =G\big(q+K(t-s) \big) \leq G(q)\quad\text{in }\Omega(s).
$$
We immediately conclude that $q \leq p$ in $\Omega(s)$, which means, using also the monotonicity in time of $p$, that $0 \leq p(x,t)- p(x,s) \leq K(t-s)$ in that set.
\end{proof}

\section{Convergence of the positivity sets}
\label{convergence.positivity.sets}
\setcounter{equation}{0}

The aim of this section is to prove that the positivity sets of the functions $n_\gamma(t)$ converge, in the sense of Hausdorff distance, as $\gamma\to\infty$ to the positivity set of $n(t)$, Theorem~\ref{thm:convergence.positivity.sets}. In the case with no reaction, $G\equiv0$, this analysis was performed in \cite{GQ}.
Notice that $\{n_\gamma(t)>0\}=\{p_\gamma(t)>0\}$ for all $t>0$. However,  it is not true in general that $\{n(t)>0\}=\{p(t)>0\}$.

In order to prove this theorem we will use that $n_\gamma(t)$ converges to $n(t)$ in $L^1(\mathbb{R}^N)$ for all $t>0$.

\begin{lemma}
Let $t>0$. Then $\|n_\gamma(t)-n(t)\|_{L^1(\mathbb{R}^N)}\to 0$ as $\gamma\to\infty$.
\end{lemma}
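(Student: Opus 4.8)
\emph{Proof plan.} The statement is a time-slicewise strengthening of the strong convergence $n_\gamma\to n$ in $L^1(Q_T)$ established in \cite{PQV}, and I would argue in the spirit of the case $G\equiv0$ treated in \cite{GQ}. The plan is to combine three facts: (i) strong $L^1(Q_T)$ convergence, which along a subsequence forces $n_\gamma(t)\to n(t)$ in $L^1(\mathbb R^N)$ for a.e.\ $t>0$; (ii) a modulus of continuity in time for $t\mapsto n_\gamma(t)$ in $L^1(\mathbb R^N)$ which is \emph{uniform in} $\gamma$ on intervals $[\tau_0,T]$ with $\tau_0>0$; and (iii) the continuity $n\in C([0,\infty);L^1(\mathbb R^N))$ from \cite{PQV}. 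Since Theorem~\ref{thm:convergence.positivity.sets} assumes $n^0$ compactly supported and $\{n^0_\gamma>0\}\to\{n^0>0\}$ in Hausdorff distance, one has $\supp n^0_\gamma\subset B_R(0)$ for $\gamma$ large; combining this with $p_\gamma\le p_M$ and the uniform finite speed of propagation of \cite{PQV} yields a radius $\rho=\rho(T)$ with $\supp n_\gamma(t)\subset B_\rho(0)$ for all $t\in[0,T]$ and all large $\gamma$. This uniform compact support is what will let me turn pointwise-in-$x$ bounds into $L^1$ bounds.

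For step (ii), I would fix $0<\tau_0<T$ and combine the Aronson--B\'enilan estimate \eqref{eq:patpg} of Lemma~\ref{lem:delta} with the relation $p_\gamma=n_\gamma^\gamma$ to get
\begin{equation*}
\pa_t n_\gamma \ge -c\,\frac{\mathrm{e}^{-\gamma ct}}{1-\mathrm{e}^{-\gamma ct}}\quad\text{in }\mathcal D'(Q),
\end{equation*}
so that, integrating in time, for a.e.\ $x$ and all $\tau_0\le s\le t\le T$,
\begin{equation*}
n_\gamma(x,t)-n_\gamma(x,s)\ge -\delta_\gamma,\qquad \delta_\gamma:=\tfrac1\gamma\,\ln\tfrac1{1-\mathrm{e}^{-\gamma c\tau_0}},\qquad \delta_\gamma\to0\ \text{as }\gamma\to\infty .
\end{equation*}
Hence $(n_\gamma(s)-n_\gamma(t))_+\le \delta_\gamma\,\chi_{B_\rho(0)}$, while integrating the equation in space gives $\int_{\mathbb R^N}(n_\gamma(t)-n_\gamma(s))=\int_s^t\!\int_{\mathbb R^N} n_\gamma G(p_\gamma)\le C(t-s)$, because $0\le n_\gamma\le1$, $0\le G(p_\gamma)\le G(0)$ and $\int n_\gamma(\tau)\le \mathrm{e}^{G(0)T}\sup_\gamma\int n^0_\gamma<\infty$. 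Adding the two contributions,
\begin{equation*}
\|n_\gamma(t)-n_\gamma(s)\|_{L^1(\mathbb R^N)}\le 2|B_\rho(0)|\,\delta_\gamma + C(t-s)\qquad\text{for all }\tau_0\le s\le t\le T .
\end{equation*}

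I would then conclude by contradiction. Fix $t_0>0$ and suppose $\|n_{\gamma_j}(t_0)-n(t_0)\|_{L^1}\ge\varepsilon_0>0$ along some sequence $\gamma_j\to\infty$. Passing to a further subsequence (not relabeled), $n_{\gamma_j}(t)\to n(t)$ in $L^1(\mathbb R^N)$ for a.e.\ $t$; using (iii), pick one such time $t_1\in(\tfrac{t_0}2,t_0]$ with $C(t_0-t_1)+\|n(t_1)-n(t_0)\|_{L^1}<\tfrac{\varepsilon_0}2$. Then, with $\tau_0=\tfrac{t_0}2$,
\begin{equation*}
\varepsilon_0\le \|n_{\gamma_j}(t_0)-n_{\gamma_j}(t_1)\|_{L^1}+\|n_{\gamma_j}(t_1)-n(t_1)\|_{L^1}+\|n(t_1)-n(t_0)\|_{L^1},
\end{equation*}
and since $\|n_{\gamma_j}(t_0)-n_{\gamma_j}(t_1)\|_{L^1}\le 2|B_\rho(0)|\,\delta_{\gamma_j}+C(t_0-t_1)$, letting $j\to\infty$ (so $\delta_{\gamma_j}\to0$ and $\|n_{\gamma_j}(t_1)-n(t_1)\|_{L^1}\to0$) yields $\varepsilon_0\le\tfrac{\varepsilon_0}2$, a contradiction. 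Hence $n_\gamma(t_0)\to n(t_0)$ in $L^1(\mathbb R^N)$ for every $t_0>0$.

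The only genuinely delicate point is step (ii): the time modulus degenerates as $t\to0$ — which is exactly why the statement is restricted to $t>0$ — and the passage from the pointwise bound on $(n_\gamma(s)-n_\gamma(t))_+$ to an $L^1$ bound really uses the uniform compact support, i.e.\ the hypothesis of Theorem~\ref{thm:convergence.positivity.sets}. Everything else is soft, resting only on the strong $L^1(Q_T)$ convergence and the time-continuity of $n$ from \cite{PQV}.
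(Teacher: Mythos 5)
Your argument is correct in substance, but it follows a genuinely different route from the paper. The paper's proof is a compactness argument at fixed time: it invokes the \lq almost contraction' property $\int_{\mathbb{R}^N}\{n_\gamma(t)-\hat n_\gamma(t)\}_+\le \mathrm{e}^{G(0)t}\int_{\mathbb{R}^N}\{n_\gamma(0)-\hat n_\gamma(0)\}_+$ from \cite{PQV}, applied to spatial translates (so the uniform $BV$ bound on $n^0_\gamma$ yields equicontinuity under translations of the family $\{n_\gamma(t)\}_\gamma$), and then Fr\'echet--Kolmogorov together with the uniform control of the supports gives precompactness in $L^1$ and hence convergence to $n(t)$. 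You instead exploit near-monotonicity in time: the Aronson--B\'enilan bound of Lemma~\ref{lem:delta} at the density level gives a one-sided, uniform-in-$\gamma$ modulus of continuity for $t\mapsto n_\gamma(t)$ in $L^1$ away from $t=0$, which you combine with the a.e.-in-$t$ convergence extracted from the $L^1(Q_T)$ convergence, the continuity of $t\mapsto n(t)$ in $L^1$, and the uniform compact support. Your route avoids the spatial equicontinuity/Fr\'echet--Kolmogorov machinery and makes transparent why the statement is restricted to $t>0$ (the time modulus degenerates as $\tau_0\to 0$); the paper's route is shorter given that the contraction estimate and support control are already established in \cite{PQV}, and it delivers compactness of $\{n_\gamma(t)\}$ at every fixed time directly. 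One minor caveat on your step (ii): deriving $\pa_t n_\gamma\ge -c\,n_\gamma\,\mathrm{e}^{-\gamma ct}/(1-\mathrm{e}^{-\gamma ct})$ by \lq\lq dividing'' \eqref{eq:patpg} by $\gamma n_\gamma^{\gamma-1}$ is only formal where $n_\gamma$ vanishes; the inequality should be taken directly at the density level (it is exactly the estimate the paper itself uses in the proof of Lemma~\ref{uniform.convergence.for.p}, going back to \cite{AB,PQV}), after which your time integration and the conclusion are sound.
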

\begin{proof}
The \lq almost contraction' property
$$
\int_{\mathbb{R}^N} \{n_\gamma(t)-\hat n_\gamma(t)\}_+\le
\textrm{e}^{G(0)t}\int_{\mathbb{R}^N} \{
n_\gamma(0)-\hat n_\gamma(0)\}_+,
$$
proved in~\cite{PQV}, together with Frechet-Kolmogorov's compactness criteria yield local convergence in $L^1(\mathbb{R}^N)$. The result then follows from the
uniform control of the supports of the functions $n_\gamma(t)$, also obtained in~\cite{PQV}.
\end{proof}

\noindent\emph{Notation. }  We denote the
$\delta$-neighbourhood of a set $A\subset\mathbb{R}^N$ by $V_\delta(A)$, i.e.,
$$
V_\delta(A)=\{x\in\mathbb{R}^N, d(x,A)<\delta\}.
$$

We have to prove that for any $\delta>0$ and $t>0$ we have
\begin{equation}
\label{eq:inclusions}
\{n(t)>0\}\subset V_\delta(\{n_\gamma(t)>0\}),\quad \{n_\gamma(t)>0\}\subset V_\delta(\{n(t)>0\})\quad\text{for all }\gamma\text{ large enough.}
\end{equation}

We begin by proving the
easy part of the convergence result.
\begin{lemma} Let $t>0$. Given any $\delta>0$,
$$
\{n(t)>0\} \subset V_\delta(\{n_\gamma(t)>0\})\quad\text{for all $\gamma$ large enough}.
$$
\end{lemma}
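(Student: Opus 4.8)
The plan is to derive this inclusion directly from the $L^1(\mathbb{R}^N)$-convergence $n_\gamma(t)\to n(t)$ established in the previous lemma, together with the non-negativity of $n(t)$ and the $n_\gamma(t)$, the uniform control of the supports of $n_\gamma(t)$ (hence of $n(t)$) from~\cite{PQV} — recall $n^0$ is compactly supported — and the explicit structure of $n(t)$ given by Proposition~\ref{prop:n}. I argue by contradiction.

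Suppose the inclusion fails for some $\delta>0$. Then there are a subsequence $\gamma_k\to\infty$ and points $x_k\in\{n(t)>0\}$ with $\mathrm{dist}\big(x_k,\{n_{\gamma_k}(t)>0\}\big)\ge\delta$. Since $\{n(t)>0\}$ is bounded, after passing to a further subsequence we may assume $x_k\to x_\infty$. For $k$ large, $B_{\delta/2}(x_\infty)\subset B_\delta(x_k)$ and $B_\delta(x_k)\cap\{n_{\gamma_k}(t)>0\}=\emptyset$, so $n_{\gamma_k}(t)=0$ a.e.\ on $B_{\delta/2}(x_\infty)$. Passing to the limit with the $L^1$-convergence, $\int_{B_{\delta/2}(x_\infty)}n(t)=0$, and since $n(t)\ge 0$ this forces $n(t)=0$ a.e.\ on $B_{\delta/2}(x_\infty)$.

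It remains to contradict $x_k\to x_\infty$ with $x_k\in\{n(t)>0\}$, which is where the structure enters. By Proposition~\ref{prop:n} (formula~\eqref{eq:defnb}), $n(t)=\chi_{\widetilde\Omega(t)}+\textrm{e}^{G(0)t}n^0\,\chi_{\mathbb{R}^N\setminus\widetilde\Omega(t)}$ with $\widetilde\Omega(t)$ open. Hence $n(t)=0$ a.e.\ on $B_{\delta/2}(x_\infty)$ forces both $\widetilde\Omega(t)\cap B_{\delta/2}(x_\infty)=\emptyset$ (an open set of measure zero is empty) and $n^0=0$ a.e.\ on $B_{\delta/2}(x_\infty)$, i.e.\ $\supp n^0\cap B_{\delta/2}(x_\infty)=\emptyset$. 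With the pointwise representative~\eqref{eq:defnb} this gives $\{n(t)>0\}\cap B_{\delta/2}(x_\infty)=\emptyset$, contradicting $x_k\in\{n(t)>0\}$ for $k$ large. Honestly there is no deep obstacle here — this is the easy half of~\eqref{eq:inclusions} — and the only point requiring care is the bookkeeping of null sets, namely converting ``$n(t)=0$ a.e.\ near $x_\infty$'' into ``$x_\infty$ (and the nearby $x_k$) lie outside $\{n(t)>0\}$''; this is precisely what Proposition~\ref{prop:n} supplies, since the positive part of $n(t)$ is the union of the \emph{open} set $\widetilde\Omega(t)$ and $\supp n^0$, both of which genuinely avoid $B_{\delta/2}(x_\infty)$ once $n(t)$ vanishes a.e.\ there. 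The reverse inclusion, which is the harder part, will be handled separately by a non-degeneracy argument.
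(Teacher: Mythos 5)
Your proof is correct, but it follows a genuinely different route from the paper's. The paper splits $\{n(t)>0\}=\Omega(t)\cup\{n^0>0\}$ and treats the two pieces with different tools: for $\{n^0>0\}$ it uses the retention property $\{n_\gamma^0>0\}\subset\{n_\gamma(t)>0\}$ together with the standing hypothesis that $\{n^0_\gamma>0\}\to\{n^0>0\}$ in Hausdorff distance; for $\Omega(t)$ it covers the (bounded) set by finitely many balls $B(x_i,\delta/2)$ with $x_i\in\Omega(t)$, notes that $\min_i\|n(t)\|_{L^1(B_i)}=\eta>0$, and uses $L^1$-convergence to show $n_\gamma(t)$ cannot vanish on any $B_i$. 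Your unified compactness-and-contradiction argument instead extracts the whole inclusion from the $L^1$-convergence, the uniform support bound, and the structure formula of Proposition~\ref{prop:n}, and in particular dispenses with both the retention property and the Hausdorff-convergence hypothesis on the initial positivity sets. That is a modest gain in economy. The one place to be careful --- and it is a choice of convention rather than a gap, as the paper's proof makes an equivalent implicit choice when it writes $\{n(t)>0\}=\Omega(t)\cup\{n^0>0\}$ --- is the final step where ``$n^0=0$ a.e.\ on $B_{\delta/2}(x_\infty)$'' is upgraded to ``$x_k\notin\{n(t)>0\}$ for large $k$'': this requires interpreting $\{n^0>0\}$ via a representative that vanishes identically off $\supp n^0$ (equivalently, replacing $\{n^0>0\}$ by $\supp n^0$ in \eqref{eq:defnb}), so that a null set where the pointwise representative of $n^0$ happens to be positive cannot sabotage the conclusion. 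With that standard convention for BV data, your argument is complete and closes the easy inclusion without invoking the Hausdorff hypothesis, which the paper reserves for exactly this half.
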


\begin{proof} We already know from Theorem~\ref{thm:main1} that
$\{n(t)>0\}=\Omega(t)\cup \{n^0>0\}$. On the one hand, the retention property for the Porous Medium Equation implies that  $\{n_\gamma^0>0\}\subset\{n_\gamma(t)>0\}$ for all $\gamma>0$ and $t>0$. Hence, using the convergence of the positivity sets of the initial data,
$$
\{n^0>0\}\subset V_\delta(\{n_\gamma^0>0\})\subset V_\delta(\{n_\gamma(t)>0\}).
$$
On the other hand, $n(t)= 1$ on $\Omega(t)$. Moreover, $\Omega(t)$ is bounded, see \cite{PQV}, and we
can cover it with a finite family of balls $B_i=B(x_i,\delta/2)$,
with each $x_i\in \Omega(t)$. Note that all the intersections $B_i\cap \Omega(t)$
are non void open sets, hence
$$
\min_i \|n(t)\|_{L^1(B_i)}=\eta>0.
$$
If we take $\gamma$ such that
$$
\|n_\gamma(t)-n(t)\|_{L^1(\mathbb{R}^N)}<\eta,
$$
then $n_\gamma(t)$ can not vanish identically in any of the balls $B_i$
and, as an immediate consequence,
$$
B_i \subset V_\delta(\{n_\gamma(\cdot,t)>0\}) \quad \mbox{\rm for all }
i.
$$
\end{proof}

The \lq difficult' part of the proof, the second inclusion in~\eqref{eq:inclusions},  uses comparison with certain supersolutions to
\begin{equation}
\label{eq:pressure.with.G(0)}
\partial_t p =\gamma p\Delta p + |\na p|^2 + \gamma p G(0).
\end{equation}

\begin{lemma}
\label{lemma.on.supersolutions} The radial function
\begin{equation}
\label{supersolutions} P(x,t)=
\left(C-\frac{(|x|-r_0)^2}{4t}\right)_+,\qquad |x|\le r_0
\end{equation}
is a supersolution of~\eqref{eq:pressure.with.G(0)} in
$$
|x|<r_0,\qquad 0\le t\le \min\left\{\frac1{4G(0)},\frac{r_0^2}{16N^2C}\right\}.
$$
Moreover, in that range of times $P=0$ in the ball $B_{r_0'}(0)$, $r'=\frac{r_0(2N-1)}{2N}$.
\end{lemma}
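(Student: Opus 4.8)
Here is the plan. The proof is a direct computation: verify the differential inequality in the radial variable $\rho=|x|$ on the set where $P>0$, and then upgrade to the weak formulation by the standard positive-part argument for degenerate equations of porous medium type. I expect no genuine obstacle; the only points needing care are (a) that the time restriction keeps $\rho$ bounded away from the origin on the positivity set, so that $\Delta u$ is controlled and the inequality closes, and (b) the routine handling of the positive part in the distributional sense.

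First I would write $P=u_+$ with $u(\rho,t)=C-\frac{(\rho-r_0)^2}{4t}$, so that $\{u>0\}=\{\,r_0-2\sqrt{Ct}<\rho<r_0+2\sqrt{Ct}\,\}$; inside $\{\rho\le r_0\}$ the function $P$ is positive exactly on $\{\,r_0-2\sqrt{Ct}<\rho\le r_0\,\}$, while $P\equiv 0$ for $\rho\le r_0-2\sqrt{Ct}$. This already yields the last assertion: if $0\le t\le \frac{r_0^2}{16N^2C}$ then $2\sqrt{Ct}\le\frac{r_0}{2N}$, hence $r_0-2\sqrt{Ct}\ge r_0\bigl(1-\tfrac1{2N}\bigr)=r'$, so $\overline{B_{r'}(0)}\subset\{P=0\}$.

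Next I would compute, on the smooth region $\{u>0\}\cap\{\rho<r_0\}$,
$$\pa_t u=\frac{(\rho-r_0)^2}{4t^2},\qquad |\na u|^2=\frac{(\rho-r_0)^2}{4t^2},\qquad \Delta u=u_{\rho\rho}+\tfrac{N-1}{\rho}u_\rho=\frac{(N-1)r_0-N\rho}{2t\rho},$$
and substitute into $\mathcal L[u]:=\pa_t u-\gamma u\,\Delta u-|\na u|^2-\gamma G(0)u$. The first and third terms cancel, leaving
$$\mathcal L[u]=\frac{\gamma u}{2t\rho}\Bigl(\rho\,(N-2tG(0))-(N-1)r_0\Bigr),$$
where $\gamma u/(2t\rho)>0$ since $\rho\ge r'>0$ on this set. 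Thus it remains to check $\rho\,(N-2tG(0))\ge(N-1)r_0$. The bound $t\le\frac1{4G(0)}$ gives $N-2tG(0)\ge\frac{2N-1}{2}$, and the second time restriction gives $\rho>r_0-2\sqrt{Ct}\ge\frac{(2N-1)r_0}{2N}$; hence
$$\rho\,(N-2tG(0))\ge \frac{(2N-1)r_0}{2N}\cdot\frac{2N-1}{2}=\frac{(2N-1)^2r_0}{4N}\ge\frac{4N(N-1)r_0}{4N}=(N-1)r_0,$$
using $(2N-1)^2=4N^2-4N+1\ge 4N(N-1)$. So $\mathcal L[u]\ge 0$ wherever $P>0$, while $\mathcal L[P]=0$ on $\{u<0\}$ trivially.

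Finally I would record that $P=u_+$ is then a weak supersolution of~\eqref{eq:pressure.with.G(0)} on $\{|x|<r_0\}$ for $0<t\le\min\{\frac1{4G(0)},\frac{r_0^2}{16N^2C}\}$. Here $P$ is Lipschitz, $\na P=\chi_{\{u>0\}}\na u$ is bounded so $|\na P|^2=\chi_{\{u>0\}}|\na u|^2$, $\pa_t P=\chi_{\{u>0\}}\pa_t u$ a.e., and $\Delta P=\chi_{\{u>0\}}\Delta u+\mu$ with $\mu$ a nonnegative measure carried by the smooth interface $\{u=0\}$; since $P$ is continuous and vanishes there, $P\Delta P=\chi_{\{u>0\}}\,u\,\Delta u$ as distributions, with no singular part. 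Combining,
$$\pa_t P-\gamma P\Delta P-|\na P|^2-\gamma G(0)P=\chi_{\{u>0\}}\,\mathcal L[u]\ge 0\quad\text{in }\mathcal D'\bigl(\{|x|<r_0\}\times(0,T^*)\bigr),$$
with $T^*=\min\{\frac1{4G(0)},\frac{r_0^2}{16N^2C}\}$ (equivalently, one may simply invoke the standard fact that the positive part of a classical supersolution of this degenerate pressure equation is a weak supersolution). At $t=0$ the formula reduces to $P(\cdot,0)=0$ a.e., which is the relevant initial condition for the comparison argument.
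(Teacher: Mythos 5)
Your computation is correct and is precisely the "straightforward" verification the paper leaves to the reader: with $u=C-\frac{(\rho-r_0)^2}{4t}$, the terms $\partial_t u$ and $|\nabla u|^2$ cancel, reducing $\mathcal L[u]$ to $-\gamma u\bigl(\Delta u+G(0)\bigr)=\frac{\gamma u}{2t\rho}\bigl(\rho(N-2tG(0))-(N-1)r_0\bigr)$, and the two time restrictions give exactly the bounds $N-2tG(0)\ge\frac{2N-1}{2}$ and $\rho>\frac{(2N-1)r_0}{2N}$ needed to close the inequality via $(2N-1)^2\ge 4N(N-1)$; the upgrade to a weak supersolution via the positive part is standard since $P$ vanishes where the singular part of $\Delta P$ lives. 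This matches the paper's (unwritten) argument.
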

\begin{proof}
Straightforward, just work out the computations and use the
equation~\eqref{eq:pressure.with.G(0)} for the pressure.
\end{proof}

We start by proving  a strong version of the second inclusion in~\eqref{eq:inclusions} for \lq small times'.

\begin{lemma}
Given $\delta>0$, let $\delta'=\frac{\delta (2N-1)}{2N}$.
There exists a constant $\gamma_0=\gamma_0(\delta)>0$ such that $V_{\delta'}(\{n_\gamma(t)>0\})\subset V_\delta(\{n(t)>0\})$ for all $t\le 1/\gamma$ if $\gamma\ge\gamma_0$.
\end{lemma}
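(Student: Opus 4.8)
The plan is to argue by contraposition, using the supersolution of Lemma~\ref{lemma.on.supersolutions} and the comparison principle for equation~\eqref{eq:pressure.with.G(0)}. Since $\{n_\gamma(t)>0\}=\{p_\gamma(t)>0\}$, it suffices to prove: if $x_0\notin V_\delta(\{n(t)>0\})$, then $p_\gamma(\cdot,s)\equiv 0$ on $B_{\delta'}(x_0)$ for every $s\le 1/\gamma$, once $\gamma$ is large enough. First, $\{n^0>0\}\subset\{n(t)>0\}$ (as $n(s)=1$ on $\Omega(s)$ and $n(s)=\mathrm{e}^{G(0)s}n^0$ on $\mathbb{R}^N\setminus\Omega(s)$), so $x_0\notin V_\delta(\{n(t)>0\})$ forces $\mathrm{dist}(x_0,\{n^0>0\})\ge\delta$. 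By the assumed Hausdorff convergence $\{n^0_\gamma>0\}\to\{n^0>0\}$, for every $\eta>0$ there is a threshold $\gamma_1(\eta)$, \emph{independent of} $x_0$, such that $\{n^0_\gamma>0\}\subset V_\eta(\{n^0>0\})$ for $\gamma\ge\gamma_1(\eta)$; hence $n^0_\gamma\equiv0$, and so $p^0_\gamma\equiv0$, on $\overline{B_{\delta-\eta}(x_0)}$.

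For the barrier argument, write~\eqref{eq:pmep} as $\pa_t p_\gamma=\gamma p_\gamma\Delta p_\gamma+|\na p_\gamma|^2+\gamma p_\gamma G(p_\gamma)$ and use $p_\gamma\ge0$ and $G(p_\gamma)\le G(0)$ to see that $p_\gamma$ is a subsolution of~\eqref{eq:pressure.with.G(0)}. Let $P$ be the supersolution of Lemma~\ref{lemma.on.supersolutions} centered at $x_0$ with $C=p_M$ and $r_0=\delta-\eta$, and let $P_\tau(x,s)=P(x,s+\tau)$ with a small $\tau>0$; since~\eqref{eq:pressure.with.G(0)} is autonomous and the shift keeps $s+\tau$ away from $0$, $P_\tau$ is a supersolution on $B_{\delta-\eta}(x_0)\times[0,T_0^*-\tau]$, where $T_0^*=\min\{\tfrac1{4G(0)},\tfrac{(\delta-\eta)^2}{16N^2p_M}\}$. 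On the lateral boundary $|x-x_0|=\delta-\eta$ one has $P_\tau=p_M\ge p_\gamma$ (using $p_\gamma\le p_M$), and at $s=0$ one has $P_\tau(\cdot,0)\ge 0=p^0_\gamma$ on $B_{\delta-\eta}(x_0)$; hence the comparison principle gives $p_\gamma\le P_\tau$ on the whole cylinder. From the explicit formula, $P_\tau(x,s)=0$ as soon as $|x-x_0|\le(\delta-\eta)-2\sqrt{p_M(s+\tau)}$, so $p_\gamma(\cdot,s)$ vanishes on the ball of radius $(\delta-\eta)-2\sqrt{p_M(s+\tau)}$ about $x_0$, for all $s\le 1/\gamma$, provided $1/\gamma\le T_0^*-\tau$.

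It then remains to fit the parameters together. We want $(\delta-\eta)-2\sqrt{p_M(1/\gamma+\tau)}\ge\delta'=\tfrac{2N-1}{2N}\delta$, that is $\eta+2\sqrt{p_M(1/\gamma+\tau)}\le\tfrac{\delta}{2N}$, together with $1/\gamma\le T_0^*-\tau$; note $T_0^*>0$ is fixed once $\eta$ is, because $G(0)>0$. This is achieved by taking $\eta=\tfrac{\delta}{8N}$, then $\tau\in(0,T_0^*)$ with $2\sqrt{p_M\tau}\le\tfrac{\delta}{8N}$, and finally $\gamma_0\ge\gamma_1(\eta)$ with $1/\gamma_0$ below the (finitely many) remaining thresholds. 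For $\gamma\ge\gamma_0$, the argument applies to \emph{every} $x_0\notin V_\delta(\{n(t)>0\})$, which is exactly the asserted inclusion $V_{\delta'}(\{n_\gamma(t)>0\})\subset V_\delta(\{n(t)>0\})$ for $t\le 1/\gamma$.

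The point needing the most care is the comparison principle for the degenerate equation~\eqref{eq:pressure.with.G(0)} with the Lipschitz barrier $P_\tau$; this is classical for the porous medium equation --- and already underlies the comparison step in Lemma~\ref{lem:Alexandroff} --- so I would simply invoke it. The other feature to stress is the \emph{uniformity in} $x_0$ of $\gamma_1(\eta)$: it is the Hausdorff, rather than merely $L^1$, convergence of the initial positivity sets that delivers it, and this is what allows $\gamma_0$ to depend on $\delta$ alone.
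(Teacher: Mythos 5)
Your proof is correct and follows essentially the same route as the paper: argue by contraposition, use the Hausdorff convergence of $\{n^0_\gamma>0\}$ to ensure $p^0_\gamma$ vanishes on a ball about $x_0$, and compare $p_\gamma$ (a subsolution of \eqref{eq:pressure.with.G(0)}) with the barrier of Lemma~\ref{lemma.on.supersolutions} with $C=p_M$ to conclude $p_\gamma(\cdot,s)\equiv 0$ on $B_{\delta'}(x_0)$ for $s\le 1/\gamma$. Your bookkeeping of the losses (the $\eta$ from the Hausdorff step, the time shift $\tau$, and the fitting of parameters to recover exactly $\delta'$) is in fact a bit more explicit than the paper's, which takes $r_0=\delta$ and compresses these details into \lq\lq a comparison argument shows''.
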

\begin{proof}
Let $x_0\in\mathbb{R}^N$ such that $B(x_0,\delta)\subset\mathop{\rm Int}(\{n(t)=0\})\subset\mathop{\rm Int}(\{n^0=0\})\subset V_{\delta/4}(\mathop{\rm Int}(\{n_\gamma^0=0\}))$ with $t$ as above. Since $p_\gamma\le p_M$, a comparison argument with the function~\eqref{supersolutions} centered at $x_0$ with $C=p_M$ shows that $p_\gamma=0$ in  $B\left(x_0,\delta'\right)$ if
$$
t\le \min\left\{\frac1{4G(0)},\frac{r_0^2}{16N^2p_M}\right\}=:\frac1{\gamma_0}.
$$
\end{proof}
To deal with bigger times, we need a better control of the size of $p_\gamma$, in order to be able to apply the comparison argument. Namely, we will use that $p_\gamma$  converges uniformly to $0$ on suitable subsets of the
complement of the support of $n(t)$.
\begin{lemma}
\label{uniform.convergence.for.p} Given $t>0$ and $\delta>0$,  let $x\in\mathbb{R}^N$ such that $B(x,2\delta)\subset\mathop{\rm Int} (\{n(t)=0\})$. Then, given $\epsilon>0$, there is a value $M>1$ such that
$$
p_\gamma(y,s) < \epsilon, \qquad\mbox{for all } (y,s)\in
B(x,\delta)\times [1/\gamma,t],\qquad\mbox{for all }\gamma\ge M.
$$
\end{lemma}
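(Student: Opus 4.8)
\emph{Strategy.} I would prove this by combining a soft ``uniform in time'' $L^1$ closeness of $n_\gamma(s)$ to $n(s)$ on the box where the limit vanishes, with a local $L^\infty$--$L^1$ bound for $p_\gamma(\cdot,s)$ coming solely from the one-sided Laplacian estimate available once $s\ge 1/\gamma$; no sharp (and $\gamma$-dependent) pointwise estimate on $p_\gamma$ is needed, so the supersolution machinery of the previous lemmas is not invoked.

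\emph{Reduction and uniform $L^1$ control.} Since $B(x,2\delta)\subset\mathrm{Int}(\{n(t)=0\})$, the open set $\widetilde\Omega(t)$ does not meet $B(x,2\delta)$ and $n^0=0$ a.e.\ there; by monotonicity of $s\mapsto\widetilde\Omega(s)$ (Lemma \ref{lem:omegap}) and \eqref{eq:defnb} this gives $n(s)=p(s)=0$ a.e.\ in $B(x,2\delta)$ for all $s\in(0,t)$. Next, write \eqref{eq:pme} as $\partial_t n_\gamma=\Delta p_\gamma+n_\gamma G(p_\gamma)$. For $s\ge1/\gamma$, \eqref{eq:deltag} and the bound $\tfrac{\mathrm{e}^{-\gamma cs}}{1-\mathrm{e}^{-\gamma cs}}\le\tfrac{\mathrm{e}^{-c}}{1-\mathrm{e}^{-c}}$ yield $\Delta p_\gamma(s)\ge -C$ with $C$ independent of $\gamma$; since $p_\gamma(s)$ is supported in a fixed ball (uniform support bound from \cite{PQV}) and $\int_{\mathbb{R}^N}\Delta p_\gamma(s)=0$, the measure $\Delta p_\gamma(s)$ has total mass bounded uniformly in $\gamma$ and in $s\in[1/\gamma,t]$, while $\|n_\gamma(s)G(p_\gamma(s))\|_{L^1(\mathbb{R}^N)}\le G(0)\|n_\gamma(s)\|_{L^1(\mathbb{R}^N)}\le C$. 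Hence $s\mapsto n_\gamma(s)$ is Lipschitz into $L^1(\mathbb{R}^N)$ on $[1/\gamma,t]$ with a $\gamma$-independent constant; together with the slicewise convergence $n_\gamma(s)\to n(s)$ in $L^1(\mathbb{R}^N)$ (the first lemma of this section) and the continuity of $n$ in time, an Ascoli-type argument gives $\sup_{s\in[1/\gamma,t]}\|n_\gamma(s)-n(s)\|_{L^1(\mathbb{R}^N)}\to 0$. Since $p_\gamma=n_\gamma^\gamma\le\max\{1,p_M\}\,n_\gamma$ and $n(s)=0$ a.e.\ on $B(x,2\delta)$, this forces
\begin{equation*}
\sup_{s\in[1/\gamma,t]}\int_{B(x,2\delta)}p_\gamma(z,s)\,dz\longrightarrow 0\qquad\text{as }\gamma\to\infty.
\end{equation*}

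\emph{Localization.} For $s\ge1/\gamma$ the bound $\Delta p_\gamma(s)\ge-C$ says that $y\mapsto p_\gamma(y,s)+\tfrac{C}{2N}|y|^2$ is subharmonic; applying its sub-mean-value inequality over $B_r(y)$ (the $|y|^2$-terms cancel) gives, for $y\in B(x,\delta)$ and $0<r<\delta$,
\begin{equation*}
p_\gamma(y,s)\le\frac{1}{|B_r|}\int_{B(x,2\delta)}p_\gamma(z,s)\,dz+c_N\,C\,r^2 .
\end{equation*}
Given $\epsilon>0$, first choose $r$ with $c_N C r^2<\epsilon/2$ and then, using the displayed limit above, choose $M>1$ such that $\frac{1}{|B_r|}\int_{B(x,2\delta)}p_\gamma(z,s)\,dz<\epsilon/2$ for all $s\in[1/\gamma,t]$ and $\gamma\ge M$. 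This yields $p_\gamma(y,s)<\epsilon$ on $B(x,\delta)\times[1/\gamma,t]$, as claimed.

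\emph{Main difficulty.} The only nontrivial point is upgrading the slicewise $L^1$ convergence of $n_\gamma$ to one uniform on $[1/\gamma,t]$: this rests on the $\gamma$-independent equicontinuity estimate $\|\partial_t n_\gamma(s)\|_{L^1(\mathbb{R}^N)}\le C$ (interpreted measure-theoretically), valid precisely for $s\ge1/\gamma$ --- which is exactly why the conclusion is stated only for $s\ge1/\gamma$ --- together with the uniform control of $\supp n_\gamma(s)$. Everything else is routine.
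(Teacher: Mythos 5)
Your proposal is correct, but it executes the estimate along a genuinely different route than the paper. The paper also starts from the Aronson--B\'enilan bound of Lemma~\ref{lem:delta}, but it works with $n_\gamma^{\gamma+1}$ rather than $p_\gamma$: it introduces the auxiliary potential $h_\gamma$ solving $\Delta h_\gamma=n_\gamma$ in $B(x,2\delta)$ with zero boundary data, shows that $n_\gamma^{\gamma+1}+K_G h_\gamma$ is subharmonic for $s\ge 1/\gamma$, and controls the resulting mean-value inequality by elliptic estimates for $h_\gamma$; crucially, uniformity in $s\in[1/\gamma,t]$ is obtained from the one-line consequence of \eqref{eq:patpg}, namely $n_\gamma(s)\le C\,n_\gamma(t)$ for $s\in[1/\gamma,t]$, so that $L^1$-smallness of $n_\gamma$ at the single time $t$ suffices. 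You instead (i) get uniform-in-time $L^1$ smallness of $n_\gamma$ (hence of $p_\gamma\le\max\{1,p_M\}n_\gamma$) on $B(x,2\delta)$ by an equi-Lipschitz/Ascoli argument, using the total-mass bound on $\Delta p_\gamma(s)$ (negative part bounded via \eqref{eq:deltag}, zero total integral, uniform support from \cite{PQV}), and (ii) convert $L^1$ to $L^\infty$ by adding the quadratic $\tfrac{C}{2N}|y|^2$ directly to $p_\gamma$ and shrinking the radius $r$ so that the $O(r^2)$ error is below $\epsilon/2$. What each buys: your version dispenses with $h_\gamma$ and the elliptic $L^p$ theory entirely, at the price of a heavier uniformity-in-time step (which also leans on the uniform support control and on a measure-theoretic reading of $\partial_t n_\gamma$), whereas the paper's monotonicity trick $n_\gamma(s)\le Cn_\gamma(t)$ makes that step immediate and avoids any global mass bound on $\Delta p_\gamma$. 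Two small points, both at the same level of rigor as the paper's own argument: the inequalities \eqref{eq:deltag}--\eqref{eq:patpg} are stated in $\mathcal D'(Q)$ and must be read slicewise in $s$ (legitimate by time continuity of $p_\gamma$ for fixed $\gamma$), and the sub-mean-value inequality uses pointwise values of $p_\gamma$, which is fine since $p_\gamma$ is continuous for fixed $\gamma$; your closing remark correctly identifies that the restriction $s\ge1/\gamma$ is exactly what makes the constants $\gamma$-independent.
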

\begin{proof}
Let $h_\gamma=h_\gamma(y,s)$ be the solution to the problem
$$
\Delta h_\gamma(y,s) = n_\gamma(y,s), \quad y\in B(x,2\delta),
\qquad
h_\gamma(y)=0, \quad |y-x|=2\delta.
$$
Since
$$
-\Delta n_\gamma^{\gamma+1} =\frac{\gamma+1}{\gamma}\left( -\partial_t n_\gamma+n_\gamma G(p_\gamma)\right)\le \frac{\gamma+1}{\gamma}\left(\frac{c\textrm{e}^{-\gamma c t}}{1-\textrm{e}^{-\gamma c t}}+G(0)\right)n_\gamma\le K_G n_\gamma
$$
for all $\gamma\ge1$ and $t\ge 1/\gamma$,
the function
$$
n_\gamma^{\gamma+1}(y,t)+K_G h_\gamma(y,t)
$$
is subharmonic. Then, for $y\in B(x,\delta)$, $s\ge 1/\gamma$,
\begin{equation}
\label{estimacion.puntual.de.p.gamma}
n_\gamma^{\gamma+1}(y,s)
\le
\frac1{\omega_N \delta^n} \int_{B(y,\delta)} n_\gamma^{\gamma+1}(z,s) \, dz +
\frac{K_G}{\omega_N \delta^n} \int_{B(y,\delta)}
h_\gamma(z,s)\, dz
- K_G h_\gamma(y,s).
\end{equation}
In order to control the right hand side of this inequality, we
first estimate the size of $n_\gamma(s)$, for $s\in(1/\gamma,t)$. To
do so we consider an integrated version of the inequality
$$
\partial_s  n_\gamma(s) \ge - \frac{cn_\gamma(s)\textrm{e}^{-\gamma c s}}{1-\textrm{e}^{-\gamma c s}}.
$$
For $s_1<s_2$ we obtain
$$
n_\gamma(s_1)
\le
\left(\frac{1-\textrm{e}^{-\gamma c s_2}}{1-\textrm{e}^{-\gamma c s_1}}\right)^{1/\gamma c}  n_\gamma(s_2).
$$
This gives immediately
\begin{equation}
\label{estimacion.puntual.de.n.gamma}
n_\gamma(s)
\le
C  n_\gamma(t), \qquad s\in [1/\gamma,t].
\end{equation}
Since $n_\gamma(t)$ converges to $0$ in $L^1(B(x,2\delta))$, and the functions $n_\gamma(s)$ are uniformly bounded, we have that the functions $n_\gamma(s)$ converge to $0$ in $L^p(B(x,2\delta))$ for all $p\in[1,\infty)$ and $s\in[1/\gamma,t]$. Therefore, applying standard theory for the Laplace equation we get
that the norm of $h_\gamma(\cdot,s)$ in $L^\infty(B(x,2\delta))$ is
is  small, uniformly for $s$ in $[1/\gamma,t]$.
This gives a control on the size of
the terms in the right hand side
of~(\ref{estimacion.puntual.de.p.gamma}) where $h_\gamma$ appears.

To control the integral term on the right hand side we use that the pressures $p_\gamma(s)$ are uniformly (in $s$ and $\gamma$) bounded. Then
$$
n_\gamma^{\gamma+1}(s) \le M n_\gamma(s)\le M Cn_\gamma(t), \qquad s\in[1/\gamma,t].
$$

We conclude that $n_\gamma^{\gamma+1}(y,s)$ is small for $y\in B(x,2\delta)$, $s\in[1/\gamma,t]$, which implies that $p_\gamma(y,s)$ is small in that set.
\end{proof}

We have now all the elements to finish the proof of the second inclusion in~\eqref{eq:inclusions}.
\begin{lemma} Let $t>0$. Given any $\delta>0$,
$$
\{n_\gamma(t)>0\} \subset V_\delta(\{n(t)>0\})
$$
for all $\gamma$ large enough.
\end{lemma}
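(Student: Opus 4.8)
The plan is to prove the desired inclusion in its pointwise contrapositive form: for fixed $t,\delta>0$ we show that $p_\gamma(x_0,t)=0$ for every $x_0\in\mathbb{R}^N\setminus V_\delta(\{n(t)>0\})$ and every $\gamma$ large, the threshold for $\gamma$ being made uniform in $x_0$ by a finite covering of the (compact, thanks to the uniform control of $\supp n_\gamma(t)$ recalled above) set of relevant points. This is enough because $\{n_\gamma(t)>0\}=\{p_\gamma(t)>0\}$. Fix such an $x_0$, so $d(x_0,\{n(t)>0\})\ge\delta$ and hence $B(x_0,\delta/2)\subset\mathrm{Int}(\{n(t)=0\})$. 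The first step is to observe that $p_\gamma$ already vanishes on a ball around $x_0$ at the small time $s=1/\gamma$: by monotonicity of the family $\Omega(\cdot)$ we have $\{n(1/\gamma)>0\}\subset\{n(t)>0\}$, so $d(x_0,\{n(1/\gamma)>0\})\ge\delta$, and applying at $s=1/\gamma$ the small-time inclusion $V_{\delta'}(\{n_\gamma(s)>0\})\subset V_\delta(\{n(s)>0\})$ (valid for $s\le1/\gamma$ once $\gamma\ge\gamma_0(\delta)$, with $\delta'=\delta(2N-1)/(2N)\ge\delta/2$) yields $p_\gamma(\cdot,1/\gamma)=0$ on $B(x_0,\delta')$.

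Next I would use Lemma~\ref{uniform.convergence.for.p}, applied on $B(x_0,\delta/2)\subset\mathrm{Int}(\{n(t)=0\})$: given $\epsilon>0$ there is $M$ with $p_\gamma<\epsilon$ on $B(x_0,\delta/4)\times[1/\gamma,t]$ whenever $\gamma\ge M$. Then I would iterate the comparison with the self-similar supersolutions of Lemma~\ref{lemma.on.supersolutions}. Since $G(p_\gamma)\le G(0)$, $p_\gamma$ is a subsolution of~\eqref{eq:pressure.with.G(0)}; starting at $s_0=1/\gamma$ from $p_\gamma=0$ on $\overline{B(x_0,r_0)}$ with $r_0=\delta/4$ (note $\overline{B(x_0,\delta/4)}\subset B(x_0,\delta')$), and using on the lateral boundary $\partial B(x_0,r_k)\subset B(x_0,\delta/4)$ the bound $p_\gamma<\epsilon$, the comparison principle against $P$ with constant $C=\epsilon$ gives $p_\gamma=0$ on $\overline{B(x_0,r_k-2\sqrt{\epsilon\tau_k})}$ for $s\in[s_k,s_k+\tau_k]$, with $\tau_k=\min\{1/(4G(0)),r_k^2/(16N^2\epsilon)\}$. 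Choosing $\epsilon$ small enough, depending only on $\delta$, $G(0)$, $N$ and $t$, so that $\tau_k\equiv1/(4G(0))$ throughout and the total radius loss over the $K=\lceil 4G(0)t\rceil$ steps needed to reach time $t$ stays below $\delta/8$, the vanishing ball still has positive radius and contains $x_0$ at time $t$; hence $p_\gamma(x_0,t)=0$, for $\gamma$ larger than $\gamma_0(\delta)$, the $M=M(\epsilon)$ from the finite cover, and $1/t$.

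The main obstacle is the delicate balance in this iteration. On the one hand, the self-similar supersolutions $P$ are singular at their initial time ($P(\cdot,0^+)\equiv0$ inside the ball), so the comparison can only be launched from an instant at which $p_\gamma$ genuinely vanishes on a full ball — this is exactly why one cannot start at $t=0$ and must relocate the argument to $s=1/\gamma$ via the small-time lemma. On the other hand, each comparison step shrinks the ball on which $p_\gamma$ is known to vanish, while the number of steps needed grows with $t$. If one used only the crude bound $p_\gamma\le p_M$, forcing $C=p_M$, the lengths $\tau_k$ would decay geometrically, their sum would be finite, and the vanishing ball would disappear before reaching a prescribed large $t$; it is precisely the uniform smallness $p_\gamma<\epsilon$ supplied by Lemma~\ref{uniform.convergence.for.p}, with $\epsilon$ chosen \emph{after} $t$ and $\delta$, that keeps the step length fixed at $1/(4G(0))$ and the per-step radius loss of order $\sqrt{\epsilon}$, so that finitely many steps suffice to cover $[1/\gamma,t]$ with room to spare.
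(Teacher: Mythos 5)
Your argument is correct and follows essentially the same route as the paper: reduce to showing $p_\gamma(x_0,t)=0$ for $x_0$ away from $\{n(t)>0\}$, use the small-time lemma to get vanishing on a ball at $s=1/\gamma$, invoke Lemma~\ref{uniform.convergence.for.p} to make $p_\gamma<\epsilon$ there up to time $t$, and iterate the comparison with the supersolutions of Lemma~\ref{lemma.on.supersolutions} over $\lceil 4G(0)t\rceil$ steps of length $1/(4G(0))$, with $\epsilon$ chosen after $t,\delta$ so the cumulative radius loss stays small. Your additive bookkeeping of the radius loss $2\sqrt{\epsilon\tau}$ and the explicit finite-cover argument for uniformity in $x_0$ are only cosmetic variants of the paper's geometric radii $r_j=\frac{\delta}{2}\bigl(\tfrac{2N-1}{2N}\bigr)^j$ and its implicit uniformity.
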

\begin{proof}
Let $x_0\in\mathbb{R}^N$ such that $B(x_0,\delta)\subset\{n(t)=0\}^0$. Let $k=\lceil t4G(0)\rceil$. We take $\gamma_1\ge\gamma_0$ such that for all $\gamma\ge\gamma_1$
$$
p_\gamma\le \frac{r_k^2G(0)}{(2N-1)^2} \qquad \text{in } B(x_0,\delta/2)\times[1/\gamma,t], \quad\text{where }r_j=\frac\delta2\left(\frac{2N-1}{2N}\right)^j.
$$
We perform a sequence of $k$ comparisons in balls of decreasing radius $r_j$, $j=1,\dots,k$,  with supersolutions of the form~\eqref{supersolutions} with $C=r_k^2 G(0)/(2N-1)^2$ centered at $x_0$, in time intervals of length $1/(4G(0))$, starting at time $1/\gamma$, to obtain that $p_\gamma(t)=0$ in the ball $B(x_0,r_{k+1})$.
\end{proof}

\paragraph{\bf Acknowledgements.}
Antoine Mellet is partially supported by NSF Grant DMS-1201426.
Antoine Mellet would also like to thank the Laboratoire Jacques-Louis Lions which he was visiting  as part of the \lq\lq Junior Chair\rq\rq program of the Fondation Sciences Math\'ematiques de Paris when most of this work was completed.  Fernando Quirós is partially supported by the Spanish project MTM2014-53037-P. This work was started while he was visiting the  Laboratoire Jacques-Louis Lions. He is indebted to this institution for its hospitality. Beno\^\i t Perthame thanks Kibord, ANR-13-BS01-0004 funded by the French Ministry of Research.

\end{document}